\documentclass[11pt,a4paper,reqno]{article} 

\usepackage{anysize}
\marginsize{3.5cm}{3.5cm}{2.2cm}{2.2cm}

\usepackage{hyperref}
\hypersetup{
    colorlinks=true,       
    citecolor=blue,        
}

\usepackage[english]{babel}
\usepackage{amsmath}
\usepackage{amsfonts,amssymb}
\usepackage{enumerate}
\usepackage{mathrsfs}
\usepackage{amsthm}
\usepackage{tikz}
\usetikzlibrary{arrows}
\usepackage{mathrsfs}
\usepackage{caption}

\parskip=10pt
\parindent=0pt

\theoremstyle{plain}
\newtheorem{theo}{Theorem}[section]
\newtheorem*{theo*}{Theorem}
\newtheorem*{main*}{Main result}
\newtheorem{prop}[theo]{Proposition}
\newtheorem{lemm}[theo]{Lemma}
\newtheorem{coro}[theo]{Corollary}
\newtheorem{defi}[theo]{Definition}
\theoremstyle{definition}
\newtheorem{rema}[theo]{Remark}
\newtheorem*{rema*}{Remark}
\newtheorem{nota}[theo]{Notation}
\newtheorem*{nota*}{Notation}
\DeclareMathOperator{\cn}{div}
\DeclareMathOperator{\dist}{dist}
\def\cnx{\cn_x}
\def\be{\begin{equation}}
\def\ee{\end{equation}}
\def\e{\eqref}
\def\pair{e}

\def\defn{\mathrel{:=}}
\def\eps{\varepsilon}
\def\la{\left\vert}
\def\lA{\left\Vert}
\def\le{\leq}
\def\les{\lesssim}
\def\mez{\frac{1}{2}}
\def\ra{\right\vert}
\def\rA{\right\Vert}
\def\tdm{\frac{3}{2}}

\def\uq{\frac{1}{4}}

\def\xN{\mathbb{N}}
\def\xR{\mathbb{R}}

\def\xT{\mathbb{T}}
\def\xZ{\mathbb{Z}}
\def\px{\partial_x}
\def\py{\partial_y}
\def\pz{\partial_z}
\def\va{\varphi}

\def\poh{x\cdot\nabla}

\def\Qr{\mathscr{R}}

\pagestyle{plain}

\title{Boundary observability of gravity water waves}
\author{Thomas Alazard \\ CNRS \& \'Ecole normale sup\'erieure}
\date{\empty}

\def\notina[#1]#2{\begingroup\def\thefootnote{\fnsymbol{footnote}}\footnote[#1]{#2}\endgroup}

\begin{document}

\maketitle

\begin{abstract}
Consider a three-dimensional fluid in a rectangular tank, bounded by a flat bottom, vertical walls 
and a free surface 
evolving under the influence of gravity. We prove that one can estimate its energy 
by looking only at the motion of the points of contact between the free surface 
and the vertical walls. The proof relies on the multiplier technique, 
the Craig-Sulem-Zakharov 
formulation of the water-wave problem, a Pohozaev identity for the 
Dirichlet to Neumann operator, previous results about the Cauchy problem and computations 
inspired by the analysis done by Benjamin and Olver of the conservation laws for water waves. 
\end{abstract}

\section{Introduction}

Consider surface waves over an incompressible liquid, evolving under the influence of gravity, 
\notina[0]{This work is 
partly supported by the grant ``ANA\'E'' ANR-13-BS01-0010-03.} 
in the case where the fluid is located inside a fixed rectangular tank 
$\Qr$ of the form 
$\Qr= Q\times [-h,+\infty)$ where 
$Q=[0,L_1]\times [0,L_2]$ and $h$ is a positive constant. 
At time~$t$, the fluid domain $\Omega(t)$ is 
given by
\be\label{n1}
\Omega(t)=\left\{\, (x,y)\,:\, x\in Q,~-h\le y\le \eta(t,x)\,\right\},
\ee
where $x=(x_1,x_2)$ (resp.\ $y$) is the horizontal (resp.\ vertical) space variable. 
The equations which dictate the motion are the incompressible Euler equations with free surface. 
This is a system of two nonlinear equations: 
the incompressible Euler equation for the velocity potential $\phi\colon \Omega\rightarrow \xR$ (so that the velocity is 
$v=\nabla_{x,y}\phi$) and a 
kinematic equation for $\eta$ which states that 
the free surface moves with the fluid. 
The energy, which is the 
sum of the potential energy and the kinetic energy, is conserved:
\be\label{n2}
\frac{d\mathcal{H}}{dt}=0\quad \text{with}\quad \mathcal{H}=\frac{g}{2}\int_Q\eta^2(t,x)\, dx
+\mez\iint_{\Omega(t)}\la \nabla_{x,y}\phi(t,x,y)\ra^2\, dx dy,
\ee
where $g$ is the acceleration of gravity. 
This paper is devoted to the analysis of the following question: 
is-it possible to estimate the energy $\mathcal{H}$ of gravity water waves by looking only at the 
motion of some of the curves of contact between 
the free surface and the vertical walls? 
From the point of view of control theory, this is the question of boundary observability 
of gravity water waves.

\subsection{The water-wave equations}\label{S:11}

Hereafter we use the notations 
$\nabla=(\partial_{x_1},\partial_{x_2})$, 
$\nabla_{x,y}=(\nabla_x,\partial_y)$ and $\Delta_{x,y}=\Delta_x+\partial_y^2$. 
As already mention, the velocity field $v\colon \Omega \rightarrow \xR^{3}$ is given by $v=\nabla_{x,y} \phi$ 
for some velocity potential~$\phi\colon \Omega\rightarrow \xR$ satisfying 
\begin{equation}\label{n5}
\Delta_{x,y}\phi=0,\quad 
\partial_{t} \phi +\mez \la \nabla_{x,y}\phi\ra^2 + P +g y = 0,
\end{equation}
where 
$P$ is the pressure term. 
The water-wave equations are then given by several boundary conditions. Firstly, the velocity 
satisfies the solid wall boundary condition, so that $\partial_n\phi= 0$ 
on the boundary of the tank $\partial\mathscr{R}\cap \partial\Omega$; where 
$\partial_n$ denotes the outward normal derivative. 
This implies that 
\begin{alignat}{3}
&\partial_{x_1} \phi =0 \quad&&\text{for}\quad && x_1=0 \text{ or }x_1=L_1,\label{n6}\\
&\partial_{x_2} \phi =0 \quad&&\text{for}\quad && x_2=0 \text{ or }x_2=L_2,\label{n6b}\\
&\py \phi =0 \quad&&\text{for}\quad && y=-h.\label{n7}
\end{alignat}
The problem is then given by two boundary conditions on the free surface $\{y=\eta\}$:
\begin{align}
&\partial_{t} \eta = \sqrt{1+|\nabla\eta|^2}\, \partial_n \phi \arrowvert_{y=\eta}=\py\phi(t,x,\eta)-\nabla\eta(t,x)\cdot\nabla\phi(t,x,\eta), \label{n8}\\
& P\arrowvert_{y=\eta}=0.\label{n9}
\end{align}
The equation \e{n8} implies that 
the free surface moves with the fluid. 
The condition $P(t,x,\eta)=0$ is a dynamic condition 
that expresses a balance of forces across the free surface. 

We also assume that the free surface must intersect the vertical walls orthogonally:
\be\label{n9a}
\begin{aligned}
&\partial_{x_1}\eta=0\quad\text{for}\quad x_1=0\text{ or }x_1=L_1,\\
&\partial_{x_2}\eta=0\quad\text{for}\quad x_2=0\text{ or }x_2=L_2.
\end{aligned}
\ee
It is proved in \cite{ABZ4} 
that \e{n9a}Ê
always holds for smooth enough solutions (we elaborate on that fact in Section~\S\ref{S:canal}). 
Eventually we assume that
\be\label{n9b}
\eta\ge -\frac{h}{2},\quad \int_Q\eta \, dx=0.
\ee
One can always assume that the mean value vanishes since it is a conserved quantity. 

\subsection{Boundary observability of water waves}

 There are very few articles about the possible applications of control theory to the study of water waves. 
The first results are due to Reid and Russell \cite{ReidRussell1985} and Reid~\cite{Reid1986,Reid1995} who studied 
the linearized equations at the origin. 
Alazard, Baldi and Han-Kwan initiated in \cite{ABHK} the study of the control 
of the nonlinear equations. 
The analysis in \cite{ABHK}Ê
and the one in this paper 
rely on completely different tools. This is because of the following differences: 
the article \cite{ABHK} addresses the internal control problem for the two-dimensional equations with surface tension, 
while here we consider the boundary observability for the 2D and 3D problems without 
surface tension. In \cite{ABHK}, the analysis used in an essential way the 
infinite speed of propagation of gravity-capillary waves, which is the property that an harmonic with frequency $\la\xi\ra$ travels at a speed proportional to $\sqrt{\la\xi\ra}$. 
By contrast, for gravity waves, an harmonic with frequency $\la\xi\ra$ travels at a speed proportional to $1/\sqrt{\la\xi\ra}$. 
Loosely speaking, 
in this paper we study 
the observability problem for low and medium frequency gravity waves, while in \cite{ABHK} the controllability of gravity-capillary waves 
is deduced from an observation of high frequencies. 
With regards to the proofs, the paper 
\cite{ABHK} relies on {\em microlocal} analysis while here 
the proof will rely on the study of {\em global} quantities. 
More precisely, in \cite{ABHK} the analysis combines 
Ingham type inequalities, 
paradifferential calculus and other techniques used to study quasi-linear problems. By contrast, in this paper, 
the analysis will be based on the multiplier method and on various exact identities which are studied as conservations laws. 
In this direction, let us mention the recent article by Biccari~\cite{Biccari} which introduced the use of the multiplier method to analyze the interior controllability problem 
for the fractional Schr\"odinger equation $i\partial_tu+(-\Delta)^su=0$ with $s\ge 1/2$ in a $C^{1,1}$ bounded domain $Q$ with Dirichlet boundary condition\footnote{ 
One cannot apply this result to study the observability of gravity water waves. Indeed, 
$i)$ we are concerned with a nonlinear problem and $ii)$ 
even the linearized problem involves a different setting since it can be written under the form $i\partial_tu+(-\Delta)^su=0$ with 
$s=1/4$ (and hence the assumption $s\ge 1/2$ does not hold). The key point is that, 
if $s<1/2$, then high frequency waves propagate at a speed which goes to $0$ when $|\xi|$ goes to $+\infty$.}.

We do not assume that the reader is familiar with control theory and 
begin by recalling what is the multiplier method in the simplest case. 
Consider the one dimensional linear wave equation with Dirichlet boundary condition:
\be\label{n3a}
\partial_t^2u-\partial_x^2u=0,\quad u(t,0)=u(t,1)=0.
\ee
{\em Multiply} the equation by $x\partial_x u$ and integrate by parts, to obtain
\be\label{n3}
\mez \int_0^T (\partial_x u(t,1))^2\, dt=
\int_0^1(\partial_t u)(x\partial_x u)\, dx\Big\arrowvert_{0}^T
+\mez \iint_{S}\big[(\partial_t u)^2+(\px u)^2\big]\, dxdt
\ee
where $S=(0,T)\times (0,1)$. Since
\be\label{n3b}
\la\int_0^1(\partial_t u)(x\partial_x u)\, dx\ra
\le \mathcal{E}\defn
\mez \int_0^1 \big[(\partial_tu)^2+(\px u)^2\big]\,dx,
\ee
by using the conservation of energy ($d\mathcal{E}/dt=0$), we deduce
\be\label{n4}
\int_0^T(\px u(t,1))^2\, dt\ge (T-2)\int_0^1 \big[(\partial_tu)^2+(\px u)^2\big](0,x)\,dx.
\ee
This inequality implies that, for $T>2$, one can bound the energy by means of an observation at the boundary. 

There are inequalities analogous to \e{n4} 
which hold in multi-dimensional domains. 
In this direction, let us also mention the celebrated result 
by 
Bardos--Lebeau--Rauch~\cite{BLR} which is 
devoted to the boundary observability of 
solutions of second-order hyperbolic equations in the general case when 
the region of observability meets every 
ray of geometric optics. 

The inequality \e{n4} is a central result in the study of the control 
of the wave equation---extensions and applications of \e{n4} are 
discussed in the first part 
of the SIAM Review article by Lions~\cite{Lions1988}; see also \cite{Coron,TW2009,MicuZuazua,Rosier1997}. 
In particular, (an extension of) \e{n4} allows to prove that in multi-dimensional domains, one can drive a solution of the wave equation from the rest position to a desired 
state, in finite time, by acting only on part of the boundary. 
Our intention in this paper is to start the study the analogous problem for the water-wave equations. 
This is a very natural question since it corresponds to the wavemaker problem: 
the aim is to determine which waves can be produced by, say, the motion 
of a plate immersed in a fluid. 

In this paper, our goal is to obtain a boundary observability result similar to \e{n4} for 
gravity waves. 
By contrast with the wave equation, 
the water-wave equations are fully nonlinear and contain nonlocal terms. 
However, we will prove that a similar observability inequality holds.

\begin{defi}\label{D11}
Introduce $\psi(t,x)=\phi(t,x,\eta(t,x))$ (evaluation of the 
potential at the free surface) and
$$
\Theta\defn - \eta\partial_t\psi-\frac{g}{2}\eta^2.
$$
Set 
$$
\mathscr{B}(T)=L_1L_2 \int_0^T
\left(\frac{1}{L_2}\int_0^{L_2} \Theta(t,L_1,x_2)\, dx_2+\frac{1}{L_1}\int_0^{L_1} \Theta(t,x_1,L_2)\, dx_1\right)\, dt.
$$
(It is convenient to use the factor $L_1L_2$ to derive some identities.)
\end{defi}
Notice that $\mathscr{B}(T)$ depends only on the boundary values of the unknowns. 
Our main result asserts that $\mathscr{B}(T)$ is larger than the energy $\mathcal{H}$ (cf \e{n2}) 
when $T$ is large enough.

\begin{theo}\label{TI}
Let $\beta>1/2$ and $\chi$ be either a $C^\infty_0(Q)$ function or $\chi\equiv 1$. 
There exist three positive constants $K_0,\kappa,c$ such that, for any $N$ in $\xN$, the following result holds. 
Assume that the initial data $\eta_0$ and $\psi_0$ are of the form 
\be\label{n302}
\chi(x)\sum_{|n|+|m|\le N}a_{nm}\cos\left(\pi \frac{n x_1}{L_1}\right)
\cos\left(\pi\frac{m x_2}{L_2}\right)\quad\text{with}\quad 
\la a_{nm}\ra\le cN^{-\kappa},
\ee
and such that $\eta_0$ satisfies \e{n9b}. 
Then  there exists a unique smooth solution $(\eta,\phi)$ of the water-wave 
equations \e{n5}--\e{n9b} such that $\eta(0,x)=\eta_0(x)$ and $\psi(0,x)=\psi_0(x)$. 
This solution exists on the time interval $[0,T_N]$ with 
$T_N=K_0+K_0N^\beta$ and satisfies
$$
\mathscr{B}(T_N)\ge \mathcal{H}.
$$
If, in addition, $\eta$ and $\phi$ are independent of $x_2$ (this means that we consider a two-dimensional wave), 
then one has the stronger conclusion 
$$
L_1\int_0^{T_N}\Theta(t,L_1,0)\, dt\ge\mathcal{H}.
$$
\end{theo}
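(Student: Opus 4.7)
The plan is to transplant the multiplier argument \eqref{n3a}--\eqref{n4} from the wave equation to the Craig--Sulem--Zakharov (Hamiltonian) formulation of the water-wave problem in the unknowns $(\eta,\psi)$, with $\psi(t,x) = \phi(t,x,\eta(t,x))$.  In this formulation the energy reads
\[
\mathcal{H} = \frac{g}{2}\int_Q\eta^2\,dx + \frac{1}{2}\int_Q \psi\,G(\eta)\psi\,dx,
\]
the equations of motion are $\partial_t\eta = G(\eta)\psi$ together with a Bernoulli equation for $\partial_t\psi$, and substituting the latter in the definition of $\Theta$ yields
\[
\Theta = \frac{g}{2}\eta^2 + \frac{\eta}{2}|\nabla\psi|^2 - \frac{\eta}{2}\frac{(G(\eta)\psi + \nabla\eta\cdot\nabla\psi)^2}{1+|\nabla\eta|^2},
\]
whose linear part is the potential energy density $\frac{g}{2}\eta^2$.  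This makes $\mathscr{B}(T)$, which weights the two walls $x_1 = L_1$ and $x_2 = L_2$ by the factors $L_1$ and $L_2$, naturally reconstructible from a boundary integral of the form $\int_{\partial Q}(x\cdot n)\,\Theta\,ds$, since $x\cdot n = L_i$ on $\{x_i = L_i\}$ and $x\cdot n = 0$ on $\{x_i = 0\}$.

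The key step is to produce a pointwise-in-time identity of the schematic form
\[
\frac{dI}{dt} = -2\mathcal{H} + \int_{\partial Q}(x\cdot n)\,\Theta\, ds + R(\eta,\psi),
\]
for the multiplier functional $I(t) = \int_Q \eta\,(x\cdot\nabla\psi)\,dx$, by direct analogy with $\int_0^1 u_t (xu_x)\,dx$ in \eqref{n3}, with a remainder $R$ which is cubic or higher in $(\eta,\psi)$.  The contribution coming from $\partial_t\eta = G(\eta)\psi$ is the delicate one: I would handle it by lifting $\psi$ to its harmonic extension $\phi$ in $\Omega(t)$ and using a Pohozaev identity obtained by multiplying $\Delta_{x,y}\phi = 0$ by a vector field of the form $(x_1,x_2,y+h)\cdot\nabla_{x,y}\phi$, possibly corrected by a multiple of $\phi$. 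The vertical shift $y+h$ kills the contribution of the flat bottom \eqref{n7}, the side Neumann conditions \eqref{n6}--\eqref{n6b} eliminate the flux on the lateral walls, and the orthogonality \eqref{n9a} ensures that the free-surface trace of $|\nabla_{x,y}\phi|^2$ reorganizes, together with the contribution of $\partial_t\psi$ to $dI/dt$ (which is elementary to integrate by parts in $x$), to reproduce precisely the cubic density in $\Theta$ along $\{x_1=L_1\}\cup\{x_2=L_2\}$.  The required algebraic manipulations are of the same nature as the conservation-law identities derived by Benjamin and Olver that are cited in the abstract.

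Once such an identity is in place, time integration gives
\[
\mathscr{B}(T) \ge 2T\,\mathcal{H} - C\mathcal{H} - \int_0^T |R(t)|\,dt,
\]
using the bound $|I(t)|\lesssim \mathcal{H}$ (Cauchy--Schwarz plus coercivity of $G(\eta)$ on $H^{1/2}$) and conservation of $\mathcal{H}$.  To absorb the cubic remainder one appeals to the Cauchy theory for water waves in the tank geometry, as in \cite{ABZ4}, where \eqref{n9a} is shown to be automatic: the ansatz \eqref{n302} makes the initial data as small as desired in any fixed Sobolev space when $\kappa$ is large, while allowing frequencies up to~$N$, and propagating this smallness on $[0, T_N]$ with $T_N = K_0 + K_0 N^\beta$ forces $\int_0^{T_N}|R|\,dt \le \frac{1}{2}\mathcal{H}$ for a suitable choice of $(\kappa, K_0)$ depending on $\beta > 1/2$.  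Choosing $K_0$ so that $2T_N \ge C + \frac{3}{2}$ then yields $\mathscr{B}(T_N)\ge \mathcal{H}$.  The two-dimensional case is handled identically with the scalar multiplier $x_1\partial_{x_1}\psi$ on the interval $[0, L_1]$: only the wall $\{x_1 = L_1\}$ contributes and the $x_2$-average disappears, giving the pointwise-in-$x_2$ conclusion.

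The main obstacle is the nonlinear Pohozaev identity of the second paragraph.  The linearized version at $\eta \equiv 0$ is a direct computation on $Q\times[-h,0]$ with Neumann lateral and bottom data, but the genuinely nonlinear identity has to be written in a domain with corners where the free surface meets the vertical walls, and the remainder it produces must be controlled by the very same high Sobolev norms that the Cauchy theory of \cite{ABZ4} is able to propagate on the interval $[0, T_N]$.  The other ingredients---the energy bound on $I$, the time integration and the smallness regime---are routine once this identity is available.
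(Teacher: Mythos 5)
Your high-level strategy --- a multiplier identity for $(\eta,\psi)$, a Pohozaev identity for the harmonic extension $\phi$, and the long-time Cauchy theory of \cite{ABZ4} --- matches the paper's route, but there is one concrete false step that brings the argument down, and it is precisely the difficulty the paper flags as the crux. You claim $\la I(t)\ra\lesssim\mathcal{H}$ with $I(t)=\int_Q\eta\,(x\cdot\nabla\psi)\,dx$ ``by Cauchy--Schwarz plus coercivity of $G(\eta)$ on $H^{1/2}$.'' This does not hold: coercivity of $G(\eta)$ gives $\int\psi\,G(\eta)\psi\gtrsim\lA\psi\rA_{\dot H^{1/2}}^2$, so $\sqrt{\mathcal{H}}$ controls the $\dot H^{1/2}$-norm of $\psi$, not $\lA\nabla\psi\rA_{L^2}$ --- there is a missing half derivative. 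Consequently $\la I(t)\ra\le L\lA\eta\rA_{L^2}\lA\nabla\psi\rA_{L^2}$ cannot be bounded by $\mathcal{H}$ from the energy alone. This is why the paper's Corollary~\ref{C2} carries the explicit hypothesis $\sup_t\lA\nabla\psi(t)\rA_{L^2}\le A\sqrt{2\mathcal{H}}$ as a separate input, and why the conclusion of the theorem only holds with $T_N\sim N^\beta$: the data in \eqref{n302} give $A\sim N^{1/2}$ (Bernstein), and the whole interpolation argument in Section~\ref{S:5} is there to propagate an $A\sim N^\beta$ bound forward in time using Lannes's long-time existence, after which the term $A\mathcal{H}$ in the remainder is dominated by taking $T\gtrsim N^\beta$. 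Your plan, which treats the $t=0,T$ contributions as $O(\mathcal{H})$ and then chooses $K_0$ large independently of $N$, is therefore missing the essential balance between the observation time and the frequency cutoff.

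There is a secondary structural issue with the time integration. The remainder $R$ in your schematic identity contains a term cubic in the unknowns --- in the paper's notation, $I_2\sim\lA\nabla\eta\rA_{L^\infty}\,T\,\mathcal{H}$ --- whose integral over $[0,T]$ grows linearly in $T$. Your plan of forcing $\int_0^{T_N}\la R\ra\,dt\le\tfrac12\mathcal{H}$ while simultaneously taking $T_N$ large to dominate a $T_N$-independent constant is self-defeating unless $\lA\nabla\eta\rA_{L^\infty}$ is made to scale like $N^{-\beta}$, which is much stronger than what the Cauchy theory naturally gives (and than what the paper needs: $\lA\nabla\eta\rA_{L^\infty}<2/(5+2d)$ suffices). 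The correct bookkeeping is to absorb the $I_2$-type term into the leading $\tfrac{T}{2}\mathcal{H}$ as a $T$-proportional subtraction with a small but $N$-independent coefficient, keep $P+I_1\ge0$ using $\eta\ge -h/2$, and isolate the $I_3$-type boundary-in-time terms as the ones that require the $\lA\nabla\psi\rA_{L^2}\le A\sqrt{2\mathcal{H}}$ hypothesis. As a minor remark, the vector field $(x_1,x_2,y+h)\cdot\nabla_{x,y}\phi$ you propose would kill the bottom flux rather than produce the positive bottom contribution $\frac{h}{2}\iint\la\nabla_x\phi\ra^2(\cdot,-h)$ that the paper uses to absorb $I_1$; the unshifted $(x,y)$ is the right choice here.
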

We refer to Section~\ref{S:2} for an explanation of what we call a smooth 
solution of the water-wave equations. 
Several other remarks are in order. 

$(i)$ One can consider more general initial data, see Remark~\ref{R:final}.

$(ii)$ The second 
point to be made is a clarification of how one passes from a true observation at the boundary 
to a control of $\mathscr{B}(T)$ or $\int_0^{T_N}\Theta(t,L_1,0)\, dt$. Let us 
explain why 
these two quantities can be expressed as quantities depending only the restrictions to $\partial Q$ 
of $\eta$ and $\partial_t\eta$ 
(notice that $\eta,\partial_t\eta$ are quantities which can measured by a camera by contrast with $\psi$). 
To see this, consider firstly the case of a two-dimensional wave and 
set $m(t)=\eta(t,L_1,0)$. 
Then one has
$$
\Theta(t,M_1,0)=\mez \big[gm(t)^2-m(t)m'(t)^2\big],
$$
as can be verified by a simple calculation (see Remark~\ref{R:46}). For a three-dimensional wave, this is not so simple. 
However, 
one can determine $\psi$ from $(\eta,\partial_t\eta)$ by using the equation \e{n8} and considering the 
Neumann to Dirichlet operator (as in Appendix A.3 in Lannes' book \cite{LannesLivre}).

$(iii)$ Theorem~\ref{TI} will be deduced from an observability estimate which holds without smallness assumptions. 
Namely, we will prove that for all smooth enough solution defined on the time interval $[0,T]$, one has (cf \e{n501})
$$
\mathscr{B}(T)\ge \left(\frac{T}{2}-\frac{5+2d}{4} B T\right)\mathcal{H}
-\left(d+\tdm \right)\max\{L_1,L_2\}\frac{\sqrt{2\mathcal{H}}}{\sqrt{g}}A
$$
where
$$
B\defn \sup_{t\in [0,T]}\lA \nabla\eta(t)\rA_{L^\infty},\quad 
A\defn \sup_{t\in[0,T]}\lA \nabla\psi(t)\rA_{L^2}.
$$
The assumption \e{n302} will be used only to prove that one can bound $A$ in terms of $\sqrt{\mathcal{H}}$ on large time intervals. 
Then, when $T$ is large enough, the right-hand side of the above inequality is larger than $\mathcal{H}$.

$(iv)$ Theorem \ref{TI} gives an observability inequality 
in time $N^{\beta}$ for $\beta>1/2$. The important think to note is that, 
except for the fact that we preclude $\beta=1/2$, 
this result is sharp. Indeed, an harmonic with frequency $N$ travels at a speed proportional to $1/\sqrt{N}$ 
(the dispersion relationship for the linearized equations is $\omega(k)^2=g|k|$). 
So it might take a time $\sqrt{N}$ to reach the boundary and hence for the observation to be possible. 
Now we can explain why we need a smallness assumption on the initial data, 
namely the assumption $|a_{nm}|\le cN^{-\kappa}$. This 
is because one needs such a smallness assumption to guarantee that the solutions exist 
on large time intervals of size $N^\beta$ (the link between $\kappa$ and $\beta$ 
will be made through an interpolation argument in the proof). 

$(v)$ Another point should be added: as indicated on Figure~\ref{fig1} below, 
we are making an observation on part of the boundary only (satisfying the so-called 
geometric control condition). Indeed, 
to determine $\int_0^{T_N}\Theta(t,L_1,0)\, dt$ 
it is sufficient to look at the motion of the point of contact $\mathscr{P}(t)=(L_1,0,\eta(t,L_1,0))$ 
between the free surface and only one wall, namely $\{x_1=L_1\}$. 
Similarly, for a three-dimensional wave, 
to determine $\mathscr{B}(T)$, it is sufficient to observe 
the motion of the curves of contact between the free surface and the walls $\{x_1=L_1\}$ and $\{x_2=L_2\}$:
\begin{align*}
&\mathscr{C}_1(t)=\{(L_1,x_2,y),~x_2\in [0,L_2],~y=\eta(t,L_1,x_2)\},\\
&\mathscr{C}_2(t)=\{(x_1,L_2,y),~x_1\in [0,L_1],~y=\eta(t,x_1,L_2)\}.
\end{align*}

\begin{figure}
\begin{minipage}[b]{0.45\linewidth}
\begin{tikzpicture}[samples=100,scale=0.6]
\tikzstyle{ann} = [fill=white,font=\footnotesize,inner sep=1pt]
\draw [gray] (-2,0) -- (-2,-4.5) ;
\draw [gray] (-2,0) -- (2,3.5) ;
\draw [gray] (2,3.5) -- (2,-1) ;
\draw [color=black!70!white,line width=0.5pt,->] (2,3.5) -- (2,4.3) node [right] {$y$};
\draw [gray] (2,3.5) -- (8,3.5) ;
\draw [arrows=<->,dashed,gray] (2.05,3.9) -- (8,3.9) ;
\node [gray,ann] at (4.9,4) {$L_1$};
\draw [arrows=<->,dashed,gray] (-2,0.4) -- (1.95,3.9) ;
\node [gray,ann,above] at (0.1,2.2) {$L_2$};
\draw [gray] (8,3.5) node [right,below] {$\qquad\mathscr{R}$} -- (8,-0.5) ;
\draw [color=black!70!white,line width=0.5pt,->] (8,-0.5)  -- (8.7,-0.5) node [below] {$x_1$} ;
\draw [gray] (-2,-4.5) -- (4,-4.5) ;
 \shadedraw [top color=blue!60!white, bottom color=blue!30!white] 
 (-2,-2) -- (-2,-4.5) -- (4,-4.5) -- (4,-2) ;
  \shadedraw [right color=blue!60!white, left color=blue!50!white, color=gray] 
 (-2,-2) -- (-2,-4.5) -- (4,-4.5) -- (4,-2) ;
 \shadedraw  [left color=blue!60!white, right color=blue!30!white, color=gray] 
(4,-4.5) -- (4,-3) -- 
 plot [domain=-2:2,shift={(6,-1)},rotate=0.3] ({\x},{\x+exp(-\x*\x)}) -- (8,-0.5) ;
\shadedraw [top color=blue!30!white, bottom color=blue!60!white, color=gray] plot [domain=-2:2,rotate=0.3] ({\x},{\x+exp(-\x*\x)}) 
-- plot [domain=-2:4,shift={(4,1.55)}] ({\x},{(0.5*cos((0.524*(\x+2) r)))}) -- 
 plot [domain=2:-2,shift={(6,-1)},rotate=0.3] ({\x},{\x+exp(-\x*\x)}) -- plot [domain=4:-2,shift={(0,-2.5)}] ({\x},{(0.5*cos((0.524*(\x+2) r)))});
\draw[color=black!20!white,domain=2:-2,shift={(1,-0.06)},rotate=0.3] plot  ({\x},{\x+exp(-\x*\x)}) ;
\draw[color=black!20!white,domain=2:-2,shift={(2,-0.25)},rotate=0.3] plot  ({\x},{\x+exp(-\x*\x)}) ;
\draw[color=black!20!white,domain=2:-2,shift={(3,-0.5)},rotate=0.3] plot  ({\x},{\x+exp(-\x*\x)}) ;
\draw[color=black!20!white,domain=2:-2,shift={(4,-0.75)},rotate=0.3] plot  ({\x},{\x+exp(-\x*\x)}) ;
\draw[color=black!20!white,domain=2:-2,shift={(5,-0.93)},rotate=0.3] plot  ({\x},{\x+exp(-\x*\x)}) ;
\draw[color=black!20!white] (-1,-2.06) -- (-1,-4.5);
\draw[color=black!20!white] (0,-2.25) -- (0,-2.5);
\draw[color=black!20!white] (0,-3.3) -- (0,-4.5);
\draw[color=black!20!white] (1,-2.5) -- (1,-2.6);
\draw[color=black!20!white] (1,-3.3) -- (1,-4.5);
\draw[color=black!20!white] (2,-2.75) -- (2,-4.5);
\draw[color=black!20!white] (3,-2.93) -- (3,-4.5);
\draw [color=blue!60!white] (-2,-4.5) -- (4,-4.5);
\node at (0,-4) {$~$};
\draw [gray] (4,0) -- (4,-4.5) ;
\draw [gray] (4,0) -- (8,3.5) ;
\draw [gray] (4,0) -- (-2,0) ;
\draw [gray] (4,-4.5) -- (8,-0.5) ;
\draw [color=black!70!white,line width=0.5pt,->] (-2,-4.5) -- (-2.5,-5) ;
\node at (-2.5,-5.2) [color=black!70!white,right] {$x_2$};
\draw[style=thick,color=black,domain=4:-2,shift={(0,-2-0.5)}] plot  ({\x},{(0.5*cos((0.524*(\x+2) r)))}) ;
\node at (0.6,-2.4) [below] {$\mathscr{C}_2(t)$};
\draw[style=thick,color=black,domain=2:-2.018,shift={(6,-1)},rotate=0.3] plot  ({\x},{\x+exp(-\x*\x)}) ;
\node at (6.5,0) [below] {$\mathscr{C}_1(t)$};
\end{tikzpicture}
\end{minipage}
\hspace{5mm}
\begin{minipage}[b]{0.45\linewidth}
\begin{tikzpicture}[scale=1.1,samples=100]
\shadedraw [top color=blue!70!white, bottom color=blue!30!white] 
(-3,0) -- (-3,-3) -- (1,-3) -- (1,0)  -- 
(1,-0.7) to [out=180,in=50]
(-2,-0.5) to [out=230,in=0] (-3,-1.2) -- (-3,-1) ;
\node at (1,-0.7) {\textbullet};
\node at (1,-0.7) [right] {$\mathscr{P}(t)$};
\node at (-1,-1.7) {$\Omega(t)$};
\draw [color=black!70!white,->,dashed] (-3.5,-3.5) -- (-3.5,0.5) node [right] {$y$};
\draw [color=black!70!white,->,dashed] (-3.5,-3.5) -- (1.5,-3.5) node [below] {$x_1$};
\draw (-3,-3.4) -- (-3,-3.6)  node[color=black!70!white,below] {$0$};
\draw (1,-3.4) -- (1,-3.6)  node[color=black!70!white,below] {$L_1$};
\draw (-3.6,-0.6) -- (-3.4,-0.6)  ;
\node at (-3.6,-0.6) [color=black!70!white,left] {$0$};
\draw (-3.6,-3) -- (-3.4,-3)  ;
\node at (-3.6,-3) [color=black!70!white,left] {$-h$};
\end{tikzpicture}
\end{minipage}
\caption{Three-dimensional and two-dimensional waves  
in a rectangular tank\label{fig1}}
\end{figure}
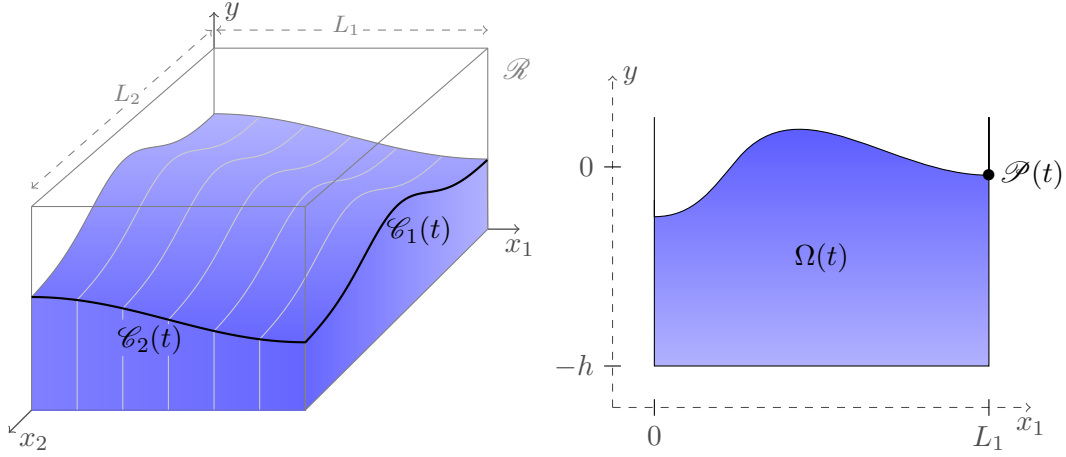

\subsection{Strategy of the proof and main identity}

In the rest of this introduction, 
for the sake of simplicity, we consider two-dimensional waves (these are solutions independent of $x_2$, and 
we write simply $\eta(t,x_1)$ and $\phi(t,x_1,y)$). We refer to 
the next sections for the corresponding statements for three-dimensional waves.

Theorem~\ref{TI} will be deduced from  
an exact identity, similar to \e{n3}, where, quite surprisingly, 
the terms coming from the nonlinear part of the equations have a very simple and compact form.

\begin{theo}\label{T1}
Consider a smooth enough solution of the water waves 
equations \e{n5}--\e{n9b} defined on the time interval $[0,T]$ and independent of $x_2$. 
Introduce 
$$
m(t)=\eta(t,L_1).
$$
Then 
$\Theta(t,L_1)=\mez \big[gm(t)^2-m(t)m'(t)^2\big]$ and 
\be\label{n10}
\begin{aligned}
L_1\int_0^T \Theta(t,L_1)\, dt&=\frac{T}{2}\mathcal{H}\\
&\quad+\frac{L_1}{2}\int_0^T\int_{-h}^{m(t)}(\py\phi)^2(t,L_1,y)\, dydt \\[1ex]
&\quad+\mez\int_0^T\int_0^{L_1} \left(h+\frac{7}{4}\eta\right)(\px\phi)^2(t,x,-h)\, dxdt\\[1ex]
&\quad-\uq \int_0^{L_1} \eta \psi\,dx\Big\arrowvert_{t=0}^{t=T}
-\int_0^{L_1} x\eta\px\psi\, dx\Big\arrowvert_{t=0}^{t=T}\\[1ex]
&\quad-\frac{7}{4}\int_0^T\iint_{\Omega(t)} (\px\eta)(\px\phi)(\py\phi)\, dxdydt,
\end{aligned}
\ee
where $\mathcal{H}$ is given by \e{n2} and $\int f\,dx\arrowvert_{t=0}^{t=T}$ 
stands for $\int f(T,x)\,dx-\int f(0,x)\,dx$.
\end{theo}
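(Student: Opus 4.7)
I would first verify the identity $\Theta(t,L_1) = \mez[g m^2 - m(m')^2]$. At $x_1 = L_1$ the wall condition \e{n6} gives $\px\phi = 0$ and \e{n9a} gives $\px\eta = 0$, whence $\px\psi = 0$ at the wall and the kinematic equation \e{n8} reduces to $m'(t) = \py\phi(t,L_1,m(t))$. Evaluating the Bernoulli equation in \e{n5} at $y=\eta$ with $P\arrowvert_{y=\eta}=0$ (by \e{n9}) and using $\partial_t\phi\arrowvert_{y=\eta} = \partial_t\psi - \py\phi\arrowvert_{y=\eta}\,\partial_t\eta$ shows that at $x_1 = L_1$ one has $\partial_t\psi = -gm + \mez (m')^2$, which, inserted into the definition of $\Theta$, yields the claim.

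\textbf{Pohozaev--Rellich identity for $\phi$.} The core of the proof is a Rellich identity at fixed time. I multiply $\Delta_{x,y}\phi = 0$ by the multiplier $M\phi \defn x_1\px\phi + (y+h)\py\phi$ and integrate over $\Omega(t)$. Two integrations by parts, together with the pointwise identity
$$\mez\nabla_{x,y}\cdot\bigl((x_1,y+h)\la\nabla_{x,y}\phi\ra^2\bigr) = \la\nabla_{x,y}\phi\ra^2 + \mez\bigl(x_1\px + (y+h)\py\bigr)\la\nabla_{x,y}\phi\ra^2,$$
reduce the volume contribution to boundary integrals. The lateral and bottom conditions \e{n6}--\e{n7} kill the $M\phi\,\pn\phi$ contributions on the solid walls and on $\{y=-h\}$, and \e{n8} rewrites $\pn\phi\,d\sigma\arrowvert_{y=\eta}$ as $\partial_t\eta\,dx$ on the free surface. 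The quadratic boundary contribution $\mez(x_1 n_{x_1} + (y+h)n_y)\la\nabla_{x,y}\phi\ra^2$ then produces exactly $\frac{L_1}{2}\int_{-h}^{m}(\py\phi)^2(L_1,y)\,dy$ on the wall $\{x_1 = L_1\}$ and $\frac{h}{2}\int_0^{L_1}(\px\phi)^2(x,-h)\,dx$ on the bottom, accounting for the corresponding terms in \e{n10}.

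\textbf{Time manipulations and assembly.} I then integrate the Rellich identity over $[0,T]$ and combine it with the time-derivative identities obtained by computing $\frac{d}{dt}\int_0^{L_1}\eta\psi\,dx$ and $\frac{d}{dt}\int_0^{L_1} x_1\eta\px\psi\,dx$. These derivatives are computed by means of the Zakharov formulation $\partial_t\eta = G(\eta)\psi$ together with the surface Bernoulli equation, namely $\partial_t\psi = -g\eta - \mez(\px\phi)^2\arrowvert_{y=\eta} + \mez(\py\phi)^2\arrowvert_{y=\eta} - (\px\eta)(\px\phi\,\py\phi)\arrowvert_{y=\eta}$, using \e{n9a} at $x_1 = 0, L_1$ at each integration by parts. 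The Hamiltonian identity $\mathcal{H} = \frac{g}{2}\int\eta^2\,dx + \mez\int\psi G(\eta)\psi\,dx$ then produces the coefficient $\frac{T}{2}$ in front of $\mathcal{H}$; the endpoint terms $-\uq\int\eta\psi\,dx\arrowvert_0^T$ and $-\int x_1\eta\px\psi\,dx\arrowvert_0^T$ emerge as the surviving time-boundary contributions. Applying the Preliminary identity at the wall $x_1 = L_1$ identifies the surviving left-hand side as $L_1\int_0^T\Theta(t,L_1)\,dt$.

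\textbf{Main obstacle.} The delicate point is the organization of the many nonlinear surface contributions generated by the Rellich identity and the time-derivative manipulations. Each integration by parts creates $\px\eta$-dependent corrections at $y=\eta$, and the quadratic form $\la\nabla_{x,y}\phi\ra^2\arrowvert_{y=\eta}$ hidden in Bernoulli produces mixed products of $\px\phi$, $\py\phi$ and $\px\eta$ that a priori seem to proliferate. The non-trivial observation driving the proof is that the multiplier $M\phi$, combined with the specific linear combination of the two time-derivative identities calibrated by the coefficient $\uq$, causes all these contributions either to telescope, via Bernoulli at the wall, into $L_1\Theta$, or to combine into the bottom correction $\frac{7}{4}\int\eta(\px\phi)^2(x,-h)\,dx$ (arising via an integration by parts that transports the $\partial_t\eta$ factor from the free surface down to the flat bottom through the harmonicity of $\phi$) and into the single trilinear interior remainder $-\frac{7}{4}\iiint(\px\eta)(\px\phi)(\py\phi)$. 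Pinning down this unique combination and tracking the sign bookkeeping is the heart of the proof.
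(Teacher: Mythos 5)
Your overall strategy---a fixed-time Pohozaev/Rellich identity for the harmonic potential, combined with the time derivatives of $\int\eta\psi\,dx$ and $\int x\eta\px\psi\,dx$, calibrated by the coefficient $1/4$ and assembled via the conservation of $\mathcal{H}$---is essentially the same route the paper takes (Proposition~\ref{Pohozaev} followed by the three lemmas of Section~\ref{S:4}; your $1/4$ is the paper's choice $\alpha=d/2-\uq$ at $d=1$). However, your Rellich step contains a concrete error. You take the multiplier $x_1\px\phi+(y+h)\py\phi$, i.e.\ the shift vector $(x_1,\,y+h)$, while the paper uses $\theta=x\cdot\nabla_x\phi+y\py\phi$, i.e.\ $(x,\,y)$. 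On the bottom $\{y=-h\}$ the outward normal is $(0,-1)$ and your shift vector has vanishing vertical component there ($y+h=0$), so the boundary term $\mez(y+h)n_y\la\nabla_{x,y}\phi\ra^2$ is \emph{identically zero} on the bottom; the contribution $\frac{h}{2}\int_0^{L_1}(\px\phi)^2(x,-h)\,dx$ that you claim to recover does not arise from it. It is the paper's shift $(x,y)$, with $(x,y)\cdot n=h$ on the bottom, that produces this term. The change $y\mapsto y+h$ is not cosmetic: in the dual evaluation of $\iint\cn_{x,y}(\theta\nabla_{x,y}\phi)$, the surface trace $\theta(x,\eta)$ picks up an extra $hB$ summand, so your free-surface contribution acquires the extra term $h\int_Q B\,G(\eta)\psi\,dx$; applying \e{n41ab} with $u=(\py\phi)^2$, $f=-(\py\phi)(\px\phi)$ (and $\py\phi(\cdot,-h)=0$) shows $\int_Q B\,G(\eta)\psi\,dx=\mez\int_Q(V^2+B^2)\,dx-\mez\int_0^{L_1}(\px\phi)^2(x,-h)\,dx$, so this does not simply reproduce the missing bottom term but leaves a spurious $\frac{h}{2}\int(V^2+B^2)$ behind. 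The shift must be $(x,y)$, not $(x,y+h)$, for the boundary-term bookkeeping to close.

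Beyond that, the remainder of your proposal is a correct roadmap but deliberately stops short of the computation you correctly identify as ``the heart of the proof'': the Benjamin--Olver-type identities (\e{n40}, \e{n41}, \e{n41ab}, and especially \e{n100}) that transport the surface trace of Bernoulli to the bottom and fix the exact coefficients $7/4$ in the bottom correction and the trilinear remainder. In the paper this is accomplished by writing the remaining bulk quantity as $A=A_1+A_2$, choosing $\alpha=d/2-\uq$ so that the $\iint\psi G(\eta)\psi$ and $\iint g\eta^2$ coefficients match, and then invoking energy conservation to extract $\frac{T}{2}\mathcal{H}$. Your sketch points in the right direction, but after fixing the multiplier you would still need to carry out those exact cancellations to obtain \e{n10}.
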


Theorem~\ref{T1}Ê
is proved in Section~\ref{S:4}. 
The proof uses the Zakharov's formulation of the water-wave problem 
as a Hamiltonian system (see~\cite{Zakharov1968})  and 
the observation by Craig and Sulem \cite{CrSu} 
that the equations and the hamiltonian are most naturally expressed in terms of the 
Dirichlet to Neumann operator $G(\eta)$. The main ingredients of the proof of Theorem~\ref{T1} are then: 
$i)$ a Pohozaev identity for the Dirichlet to Neumann operator (that is a computation of $\int (G(\eta)\psi)x\px\psi \, dx$) 
which shows that the contributions due to the boundary 
conditions are positive and $ii)$ some computations inspired by the analysis 
of Benjamin and Olver \cite{BO} of the conservation laws for water waves. 
Let us mention that, in the appendix, we give another proof of \e{n10} 
which exploits the hamiltonian structure of the water-wave equations. The idea is 
to compute
$$
A\defn \iint_{[0,T]\times [0,L_1]} \big[(\partial_t\eta)(x\px\psi)-(\partial_t\psi)(x\px\eta)\big]\, dxdt.
$$
We compute $A$ in two different ways, and the wanted identity \e{n10}Ê
is obtained by comparing the two results. The first computation is simply an integration by parts. 
The second computation relies on the fact that the equation is hamiltonian (see Zakharov~\cite{Zakharov1968}, Craig--Sulem~\cite{CrSu}), which means that
$$
\partial_t\eta=\frac{\delta \mathcal{H}}{\delta \psi},\quad \partial_t\psi=-\frac{\delta \mathcal{H}}{\delta \eta}.
$$
Therefore
$$
A=\int_0^T a(t)\, dt\quad \text{with}\quad a\defn \int_0^{L_1} \left[x\psi_x\frac{\delta \mathcal{H}}{\delta \psi}
+x\eta_x\frac{\delta \mathcal{H}}{\delta \eta}\right]\, dx,
$$
Then we compute $a$ by writing
$$
a=\lim_{\eps\rightarrow 0}\frac{1}{\eps}\left[ \mathcal{H}(\eta,\psi+\eps x\psi_x)-\mathcal{H}(\eta,\psi)\right]
+\lim_{\eps\rightarrow 0}\frac{1}{\eps}\left[ \mathcal{H}(\eta+\eps x\eta_x,\psi)-\mathcal{H}(\eta,\psi)\right],
$$
and using a shape derivative formula due to Lannes \cite{LannesJAMS}. This allows to 
avoid the use of the Pohozaev identity. However, the proof still requires to prove some identities for quantities 
which are analyzed as conservation laws.

We now indicate how 
to infer from Theorem~\ref{T1} a boundary observability result. 
This will require to make an additional assumption. Indeed, 
in sharp contrast with the example of the wave equation \e{n3a} discussed above, 
even for the linearized water-wave equations at the origin, 
there is no observability inequality in finite time which applies to 
all initial data (see Reid and Russell~\cite{ReidRussell1985} or Biccari~\cite{Biccari} for related results). 
However, we will prove a boundary observability result, 
for the full nonlinear problem, for initial data whose Fourier transforms are compactly supported. 
To do so, we begin with the following corollary of Theorem~\ref{T1}. 

\begin{coro}\label{C2}
Consider a smooth enough solution of the water waves 
equations \e{n5}--\e{n9b} defined on the time interval $[0,T]$. 
Assume that there exists two positive constants $A,B$ such that
\be\label{n102}
B<\frac{2}{7},\quad T\ge \frac{4}{2-7B}\left(1+\frac{5L_1}{\sqrt{g}}A\right),
\ee
and
\be\label{n105}
\sup_{t\in [0,T]}\lA \partial_x\eta(t)\rA_{L^\infty([0,L_1])}\le B,\quad 
\sup_{t\in[0,T]}\lA \px\psi(t)\rA_{L^2([0,L_1])}\le A\sqrt{2\mathcal{H}}.
\ee
Then
\be\label{n100i}
\begin{aligned}
L_1\int_0^T\Theta(t,L_1)\, dt &\ge \mathcal{H}\\
&\quad+\frac{L_1}{2}\int_0^T\int_{-h}^{m(t)} (\py\phi)^2(t,L_1,y)\, dydt \\
&\quad +\frac{h}{16} \int_0^T\int_0^{L_1} (\px\phi)^2(t,x,-h)\, dxdt.
\end{aligned}
\ee
\end{coro}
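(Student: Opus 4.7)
The plan is to convert the exact identity \eqref{n10} of Theorem~\ref{T1} into the observability estimate \eqref{n100i} by bounding each non-sign-definite term on its right-hand side by a multiple of $\mathcal{H}$, and then using the hypotheses \eqref{n102}--\eqref{n105} to absorb these losses into the main positive contribution $\frac{T}{2}\mathcal{H}$. The structural bound $\eta\ge -h/2$ from \eqref{n9b} is what provides the bottom-boundary contribution of \eqref{n100i}: pointwise one has $h+\frac{7}{4}\eta\ge \frac{h}{8}$, so the factor $\mez(h+\frac{7}{4}\eta)$ in \eqref{n10} is at least $\frac{h}{16}$, matching the last term of \eqref{n100i}.

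For the nonlinear volume term I would use Young's inequality together with $\lA\px\eta(t)\rA_{L^\infty}\le B$:
$$
\frac{7}{4}\iint_{\Omega(t)}\la\px\eta\ra\la\px\phi\ra\la\py\phi\ra\,dxdy\le \frac{7B}{8}\iint_{\Omega(t)}\la\nabla_{x,y}\phi\ra^2\,dxdy\le \frac{7B}{4}\mathcal{H},
$$
since the kinetic part of $\mathcal{H}$ majorizes the Dirichlet energy of $\phi$; after integration in $t$ this costs at most $\frac{7BT}{4}\mathcal{H}$. For the two time-boundary terms I would combine the energy bound $\lA\eta(t)\rA_{L^2}^2\le 2\mathcal{H}/g$ with \eqref{n105}. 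The term $\int_0^{L_1}x\eta\px\psi\,dx\big|_0^T$ is handled directly by Cauchy--Schwarz (with $|x|\le L_1$) and contributes at most $\frac{4L_1A}{\sqrt g}\mathcal{H}$. The term $\uq\int_0^{L_1}\eta\psi\,dx\big|_0^T$ is the mildly delicate one, since no $L^2$ bound on $\psi$ itself is available; here I would exploit the mean-zero assumption on $\eta$ in \eqref{n9b} by setting $E(x)\defn\int_0^x\eta(x')\,dx'$, which vanishes at both $x=0$ and $x=L_1$, and writing $\int\eta\psi\,dx=-\int E\px\psi\,dx$. The pointwise estimate $\la E(x)\ra\le\sqrt{L_1}\lA\eta\rA_{L^2}$ gives $\lA E\rA_{L^2}\le L_1\lA\eta\rA_{L^2}$, and thus this contribution is at most $\frac{L_1A}{\sqrt g}\mathcal{H}$. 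The two time-boundary terms together cost $\frac{5L_1A}{\sqrt g}\mathcal{H}$.

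Collecting these bounds, \eqref{n10} becomes
$$
L_1\int_0^T\Theta(t,L_1)\,dt\ge \mathcal{H}\left(\frac{T(2-7B)}{4}-\frac{5L_1A}{\sqrt g}\right)+\frac{L_1}{2}\int_0^T\int_{-h}^{m(t)}(\py\phi)^2(t,L_1,y)\,dy dt+\frac{h}{16}\int_0^T\int_0^{L_1}(\px\phi)^2(t,x,-h)\,dxdt,
$$
and the quantitative hypothesis \eqref{n102} is calibrated exactly so that the parenthesis is $\ge 1$, which yields \eqref{n100i}. I do not anticipate any serious obstacle: the real work has been concentrated in the proof of identity \eqref{n10}, and Corollary~\ref{C2} is essentially a bookkeeping consequence. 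The only subtle point is bypassing the absence of a global bound on $\psi$ by exploiting the mean-value-zero property of $\eta$.
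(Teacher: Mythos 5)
Your proposal is correct and follows essentially the same route as the paper: estimate the three non-sign-definite terms of the identity \eqref{n10} by multiples of $\mathcal{H}$ using \eqref{n105}, the energy bound $\lA\eta\rA_{L^2}\le\sqrt{2\mathcal{H}/g}$, and Cauchy--Schwarz/Young on the triple product, then exploit $\eta\ge -h/2$ to bound $\mez(h+\tfrac{7}{4}\eta)$ below by $h/16$, and finally use \eqref{n102} to verify the coefficient in front of $\mathcal{H}$ is at least $1$. The only cosmetic difference is in the term $\int\eta\psi\,dx$: the paper subtracts the mean from $\psi$ and applies the Poincar\'e inequality on $[0,L_1]$, whereas you integrate by parts against the primitive $E(x)=\int_0^x\eta$ (which vanishes at both endpoints by mean-zero of $\eta$); both yield the same bound $(2L_1/\sqrt{g})A\mathcal{H}$ from the same structural fact $\int_0^{L_1}\eta\,dx=0$.
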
\begin{proof}
The proof is straightforward: directly from \e{n105}, the definition of $\mathcal{H}$, 
the Cauchy-Schwarz inequality 
and the assumption $\int_0^{L_1}\eta\, dx=0$, we get
\begin{align*}
&\la \int_0^{L_1} \eta \psi\,dx\ra
=\la \int_0^{L_1} \eta \left(\psi-\int_0^{L_1}\psi\, dx\right)\,dx\ra \le L_1\lA \eta\rA_{L^2_x}\lA \px\psi\rA_{L^2_x}\le 
(2L_1/\sqrt{g})A\mathcal{H} ,\\
&\la \frac{7}{4}\int_0^T\iint_{\Omega(t)} (\px\eta)(\px\phi)(\py\phi)\, dxdydt\ra 
\le  \frac{7B}{4}T\mathcal{H},\\
&\la \int_0^{L_1} x\eta\px\psi\, dx\ra\le L_1\lA \eta\rA_{L^2_x}\lA \px\psi\rA_{L^2_x}\le (2L_1/\sqrt{g})A\mathcal{H},
\end{align*}
where we used the Poincar\'e inequality in the first line. 
Now, since $\eta\ge -h/2$ by assumption, 
using the identity \e{n10} we see that the 
left hand side of \e{n100i} is larger than the sum of the last two terms in the right hand side of \e{n100i} and of
$$
\left(\mez T-\frac{5L_1}{\sqrt{g}}A-\frac{7B}{4}T\right)\mathcal{H},
$$
which, by assumption \e{n102}, is larger than $\mathcal{H}$.
\end{proof}

\begin{rema}\label{Rhami}
$i)$ One controls not only the energy $\mathcal{H}$ 
but also the traces of the velocities on the wall $\{x=L_1\}$ and on the bottom. 

$ii)$ ({\em Unique continuation}) If $m=0$ then \e{n100i}Ê
implies that 
$\mathcal{H}=0$ and hence $\eta=0$ and $\phi=0$. 

$iii)$ The assumption $\la\px\eta(t,x)\ra\le 2/7$  
is physically realistic. 
Indeed, one expects a steepness-induced blow-up in finite time when the  
wave height is large compared to the wavelength (there are no mathematical proof of this claim 
but it is known that blow-up occurs in finite time for some large enough initial data, see \cite{CCFGGS}). 
Moreover, 
the threshold $2/7$ is in good agreement with the sharp experimental studies of steep irregular wave events 
reported in \cite{JCHG}. 

\end{rema}

To conclude this introduction, we explain how we deduce Theorem~\ref{TI}Ê
from 
Corollary~\ref{C2}. The reason why we need the assumption for $\psi$ in \e{n105} is the following: 
we do not have an estimate analogous to 
\e{n3b} for the term $\int_0^1 x\eta\px\psi\, dx$ which 
appears in the right-hand side of \e{n10}. 
Indeed, the fact that $\sqrt{2\mathcal{H}}$ is larger 
than the $L^2(\Omega(t))$-norm of $\nabla_{x,y}\phi(t)$ 
gives only a bound of the $\dot H^{1/2}_x$-norm of the trace $\psi$. 
Hence $\sqrt{\mathcal{H}}$ 
does not control the $L^2$-norm of $\px\psi$ and this why we need the assumption for $\psi$ in \e{n105}. 
Now, notice that this assumption for $\px\psi$ 
holds at $t=0$ with $A=K\sqrt{N}$ if the Fourier transform of $\psi(0)$ is supported 
in $[-N,N]$ (as in \e{n302}). Since the equations are nonlinear, we cannot assume 
that the Fourier transform of the solution will be supported in $[-N,N]$ for all time. 
However, we shall see that, for small data, one can propagate 
the estimate $\lA \px\psi(0)\rA_{L^2}\les K\sqrt{N}\lA \psi(0)\rA_{\dot H^{1/2}}$ on large time intervals and hence deduce and observability result. To do so, we shall combine 
an interpolation argument and 
the fact that the Cauchy problem is well-posed on $H^s$ (with $s$ large enough) on large time intervals for small initial data.

\subsection*{Organization of the paper.} In Section~\ref{S:2} 
we recall various results about the well-posedness of the Cauchy problem. 
This will allow us to clarify what we call a smooth enough solution. 
In Section~\ref{S:Pohozaev}, we prove a Pohozaev identity for the 
Dirichlet to Neumann operator and we use this identity in Section~\ref{S:4} to prove Theorem~\ref{T1}. 
Then we prove Theorem~\ref{TI} in Section~\ref{S:5}. 
In the appendix we give a proof of Theorem~\ref{T1} 
which exploits the hamiltonian structure of the equations.

\subsection*{Acknowledgements}

I warmly thank Nicolas Burq for stimulating discussions about 
the control and the observability of partial differential equations.

\section{About the Cauchy problem in a rectangular tank}\label{S:2}

We recall here various results about the well-posedness of the Cauchy problem. 

\subsection{The Craig--Sulem--Zakharov system}

To study the Cauchy problem for the water-wave equations, there are at least two difficulties. 
Firstly, one has to reduce the analysis to a time independent domain. 
In the case where the space variable $x$ belongs to the torus instead of $Q$, several approaches have been used. 
In this paper, we use the Eulerian approach, following Zakharov \cite{Zakharov1968}, Craig-Sulem \cite{CrSu}, 
Lannes~\cite{LannesJAMS,LannesLivre} and Alazard--Burq--Zuily~\cite{ABZ3,ABZ4}. In this approach, one works 
with the trace of the potential $\phi$ at the free surface and the Dirichlet to Neumann operator $G(\eta)$. 

Hereafter we consider either the 2D problem or the 3D problem (see figure~\ref{fig1}) and denote by $d\in \{1,2\}$ 
the dimension of the free surface. 
For a three-dimensional fluid one has $d=2$ and we use the notation 
$\nabla=(\partial_{x_1},\partial_{x_2})$ (and we also use the notation $\nabla=\partial_x$ when $d=1$). 

We set
$$
\psi(t,x)=\phi(t,x,\eta(t,x)),
$$
and introduce the Dirichlet to Neumann operator $G(\eta)$ defined by
$$
G(\eta)\psi=\sqrt{1+|\nabla\eta|^2}\, \partial_n\phi\arrowvert_{y=\eta}=(\partial_y \phi -\nabla\eta\cdot \nabla\phi)\arrowvert_{y=\eta}
$$
(see the next paragraph for a precise definition of $G(\eta)$).  
Then the Craig--Sulem--Zakharov formulation of the water-wave equations reads
\begin{equation}\label{system}
\left\{
\begin{aligned}
&\partial_t \eta=G(\eta)\psi,\\
&\partial_t \psi + g\eta+ \frac{1}{2}\la \nabla\psi\ra^2
-\frac{1}{2(1+\la \nabla\eta\ra^2)}\bigl(G(\eta)\psi+\nabla\eta\cdot \nabla\psi\bigr)^2= 0.
\end{aligned}
\right.
\end{equation}
Moreover, the energy $\mathcal{H}$ satisfies 
$$
\mathcal{H}=\frac{g}{2}\int\eta^2\, dx +\mez\iint \la \nabla_{x,y}\phi\ra^2\, dydx
=\mez \int \big[g\eta^2+\psi G(\eta)\psi\big]\, dx.
$$
We refer the reader to the original article by Craig--Sulem \cite{CrSu} 
for the derivation of this system from the water-wave equations 
\e{n5}--\e{n9} as well as to \cite{Bertinoro}Ê
for the proof that, conversely, given a solution of \e{system},Ê
one can define a solution 
of \e{n5}--\e{n9}.

\subsection{Definition of the Dirichlet to Neumann operator}\label{S:22}

We have already written the water waves equations under the form of the Craig-Sulem-Zakharov system \e{system}. 
We give here the precise definition of the Dirichlet to Neumann 
operator that is used in that system in the context of periodic functions.

For $s\in [0,+\infty)$, we denote by 
$H^s(\xT^d)$ 
the Sobolev space of periodic functions:
$$
H^s(\xT^d)=\Big\{ u=\sum_{n\in\xZ^d}a_n e_n(x)\, ;\,\sum_{n\in\xZ^d}(1+|n|^2)^{s}\la a_n\ra^2<+\infty\Big\}.
$$
where ${\displaystyle e_n(x)=\exp\Big(i\pi \frac{nx}{L_1}\Big)}$ if $d=1$, 
${\displaystyle e_n(x)=\exp\Big(i\pi \Big(\frac{n_1x_1}{L_1}+\frac{n_2x_2}{L_2}\Big)\Big)}$ when $d=2$ 
(we fix the periods equal to $2L_1,2L_2$ for reasons that will be clear below).

Fix $h>0$ and consider $\eta\in H^{s_0}(\xT^d)$ with $s_0>d/2+1$ such that 
$\eta(x)\ge -h/2$. Set
\be\label{n231}
\widetilde{\Omega}= \{(x,y)\in \xR^d\times\xR\, :\, -h\le y\le\eta(x)\}.
\ee
Since $\eta$ is Lipschitz, by the usual variational method, one obtains that, for any 
$\psi\in H^{1/2}(\xT^d)$, the problem 
\be\label{n170}
\Delta_{x,y}\phi=0\quad\text{in }\widetilde{\Omega}, \quad \phi\arrowvert_{y=\eta}=\psi, 
\quad \partial_y\phi\arrowvert_{y=-h}=0,
\ee
has a unique variational solution. Moreover, even if $\nabla_{x,y}\phi$ belongs only to $L^2(\widetilde{\Omega})$, one can prove that
$$
G(\eta)\psi=(\partial_y \phi -\nabla\eta\cdot\nabla \phi)\arrowvert_{y=\eta},
$$
is well-defined and belongs to $H^{-1/2}(\xT^d)$. More generally, one has the following result (see \cite{ABZ3,LannesLivre}).

\begin{prop}\label{P7}
Let $s_0>d/2+1$ and assume that $\eta$ is in $H^{s_0}(\xT^d)$. Then 
$G(\eta)$ is a bounded operator from $H^{\sigma}(\xT^d)$ into $H^{\sigma-1}(\xT^d)$ for any $\sigma\in [1/2,s_0]$. 
\end{prop}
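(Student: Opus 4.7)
The plan is to prove Proposition~\ref{P7} in two stages: first establish the base case $\sigma=1/2$ via the variational formulation already recalled just before the statement, then propagate higher regularity by straightening the domain and applying elliptic regularity for a variable-coefficient equation on a fixed strip. For the base case, the variational solution $\phi$ of \eqref{n170} satisfies $\lA\nabla_{x,y}\phi\rA_{L^2(\widetilde\Omega)}\lesssim \lA\psi\rA_{H^{1/2}(\xT^d)}$, obtained by minimizing the Dirichlet integral among $H^1$-extensions of $\psi$ to $\widetilde\Omega$ and exhibiting at least one such extension (for instance the harmonic extension on the flat strip $\xT^d\times[-h,0]$ transported by an appropriate Lipschitz diffeomorphism). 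One then defines $G(\eta)\psi\in H^{-1/2}(\xT^d)$ through the duality pairing
\[
\langle G(\eta)\psi,\varphi\rangle\defn \iint_{\widetilde\Omega}\nabla_{x,y}\phi\cdot\nabla_{x,y}\Phi\,dxdy
\]
for any $H^1$-extension $\Phi$ of $\varphi\in H^{1/2}(\xT^d)$; this pairing is independent of $\Phi$ because $\phi$ is harmonic and satisfies $\partial_y\phi|_{y=-h}=0$, and by Green's identity it coincides with the natural integration of $(\partial_y\phi-\nabla\eta\cdot\nabla\phi)|_{y=\eta}$ against $\varphi$. Taking $\Phi$ to be a minimal extension of $\varphi$ yields $\lA G(\eta)\psi\rA_{H^{-1/2}}\lesssim\lA\psi\rA_{H^{1/2}}$.

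For $\sigma\in(1/2,s_0]$, I would straighten the domain through the diffeomorphism $\Xi(x,z)\defn(x,\,z(\eta(x)+h)-h)$ from $\xT^d\times[0,1]$ onto $\widetilde\Omega$, which is well-defined since $\eta+h\ge h/2>0$. Setting $\tilde\phi(x,z)\defn\phi(\Xi(x,z))$, the Laplace equation pulls back to a uniformly elliptic divergence-form equation
\[
\cn_{x,z}\bigl(A[\eta]\,\nabla_{x,z}\tilde\phi\bigr)=0\quad\text{in }\xT^d\times(0,1),
\]
subject to $\tilde\phi|_{z=1}=\psi$ and $\partial_z\tilde\phi|_{z=0}=0$, where $A[\eta]$ depends smoothly on $\eta$ and $\nabla\eta$; thanks to $s_0>d/2+1$, its entries belong to $H^{s_0-1}(\xT^d)$, a Banach algebra continuously embedded in $L^\infty$. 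A standard elliptic regularity bootstrap---commuting tangential Fourier multipliers with the equation to propagate $x$-regularity, then recovering $z$-regularity from the equation itself rewritten as $\partial_z^2\tilde\phi=\cdots$ with lower-order terms on the right---yields $\tilde\phi\in H^{\sigma+1/2}(\xT^d\times(0,1))$ with $\lA\tilde\phi\rA_{H^{\sigma+1/2}}\lesssim\lA\psi\rA_{H^\sigma}$. Applying the trace theorem to $\partial_z\tilde\phi|_{z=1}$ and $\nabla_x\tilde\phi|_{z=1}$, and writing $G(\eta)\psi$ as the appropriate combination of these traces multiplied by smooth functions of $\eta,\nabla\eta\in H^{s_0-1}$, completes the proof.

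The main obstacle is controlling commutators between tangential Fourier multipliers of order approaching $s_0$ and the coefficients $A[\eta]$, which only lie in $H^{s_0-1}$; Moser-type (paraproduct) estimates prevent a loss of derivatives, and this is precisely what caps $\sigma$ at $s_0$. The hypothesis $s_0>d/2+1$ is exactly what is needed for the algebra property of $H^{s_0-1}$ used at each step of the bootstrap and in the final identification of $G(\eta)\psi$ as an $H^{\sigma-1}$-function.
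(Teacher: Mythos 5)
The paper does not prove Proposition~\ref{P7}; it cites \cite{ABZ3,LannesLivre} and moves on. Your sketch follows the same strategy those references use: a variational/duality argument for the base case $\sigma=1/2$, then flattening of the domain and elliptic regularity for the variable-coefficient divergence-form equation. In that sense you are reproducing the standard proof, so there is no competing ``paper proof'' to compare against.

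One caveat worth flagging: you describe the propagation of regularity as a ``standard elliptic regularity bootstrap'' by commuting tangential Fourier multipliers through the equation, with Moser/paraproduct estimates dealing with the commutators. At the endpoint $\sigma=s_0$ this is in fact the crux of the matter and is not a routine step. Since $s_0>d/2+1$ only, the coefficient matrix $A[\eta]$ lies in $H^{s_0-1}$, which embeds into $C^0$ (even some H\"older class) but not into $W^{1,\infty}$; therefore the Kato--Ponce form of the commutator estimate, which requires $\nabla A\in L^\infty$, is not available. One must instead work with the paradifferential decomposition $A\nabla\tilde\phi=T_A\nabla\tilde\phi+T_{\nabla\tilde\phi}A+R(A,\nabla\tilde\phi)$ and, in the references, with an approximate factorization of the transformed second-order operator into two first-order paradifferential operators. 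This is precisely what allows regularity to propagate up to $\sigma=s_0$ rather than up to $s_0-1/2$ or $s_0-1$ as a naive Moser estimate would give. Your acknowledgment of the obstacle is correct, but labelling it a ``standard'' bootstrap understates what \cite{ABZ3} and \cite{LannesLivre} actually have to do to close the argument. With that understanding made explicit, the outline is sound and consistent with the cited proofs.
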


Later we shall perform various integrations by parts and apply repeatedly the Green's identity. 
The fact that all the computations are meaningful relies on the following regularity result.
\begin{prop}\label{P6}
Let $d\ge 1$, $s>d/2+2$ and 
assume that $(\eta,\psi)\in H^{s}(\xT^d)\times H^s(\xT^d)$. Then 
$$
\nabla_{x,y}\phi\in C^1(\overline{\widetilde{\Omega}}).
$$
\end{prop}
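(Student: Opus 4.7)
The strategy is to flatten the fluid domain $\widetilde{\Omega}$ to the fixed strip $S=\xT^d\times[-h,0]$, recast the Laplace problem as a variable-coefficient elliptic equation on $S$, and then close the argument by elliptic regularity followed by Sobolev embedding. First I would choose a diffeomorphism $\Phi\colon S\to\widetilde{\Omega}$ of the form $\Phi(x,z)=(x,z+\chi(z)\tilde\eta(x,z))$, where $\chi$ is a smooth cutoff equal to $1$ near $z=0$ and $0$ near $z=-h$, and $\tilde\eta$ is an extension of $\eta$ to $S$ (possibly a smoothing one, e.g.\ a Poisson extension, in order to gain normal regularity on the coefficients). Since $\eta\in H^s(\xT^d)$ with $s>d/2+2$ embeds into $C^2(\xT^d)$ and since $\eta\ge -h/2$, one can arrange that $\Phi$ is a $C^2$-diffeomorphism onto $\widetilde{\Omega}$. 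The pulled-back potential $\tilde\phi=\phi\circ\Phi$ then solves a uniformly elliptic equation $\cn_{x,z}(A\nabla_{x,z}\tilde\phi)=0$ on $S$ with Dirichlet datum $\tilde\phi|_{z=0}=\psi$ and Neumann condition $(A\nabla_{x,z}\tilde\phi)\cdot\mathbf{n}|_{z=-h}=0$, the matrix $A$ being symmetric, positive definite, and with coefficients in $H^{s-1}(S)$.

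Next I would apply standard elliptic regularity for this mixed boundary value problem on the strip $S$. Since the boundary datum $\psi$ lies in $H^s(\xT^d)$ and the coefficients of $A$, belonging to $H^{s-1}(S)$ with $s-1>(d+1)/2$, act as multipliers on the Sobolev spaces involved, one can differentiate the equation tangentially, use ellipticity to recover $\partial_z^2\tilde\phi$, and bootstrap from the variational $H^1$-regularity (which is the only a priori information, see \cite{ABZ3,LannesLivre}). The outcome is the sharp bound $\tilde\phi\in H^{s+1/2}(S)$; the half-derivative gain over the boundary datum is the classical trace regularity for elliptic problems.

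Finally, Sobolev embedding in dimension $d+1$ yields $H^\sigma(S)\hookrightarrow C^2(\overline{S})$ as soon as $\sigma>(d+1)/2+2$. Since $s+1/2>d/2+5/2=(d+1)/2+2$, we get $\tilde\phi\in C^2(\overline{S})$, hence $\nabla_{x,z}\tilde\phi\in C^1(\overline{S})$. Composing back with $\Phi^{-1}$, which is $C^2$, and applying the chain rule gives $\nabla_{x,y}\phi\in C^1(\overline{\widetilde{\Omega}})$, as claimed.

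The main technical obstacle is the sharp elliptic regularity step with non-smooth coefficients $A\in H^{s-1}$, including the $1/2$-derivative gain up to the free surface $z=0$. This is by now classical in the water-wave literature and is carried out in detail in the references already cited in the paper; choosing $\tilde\eta$ to be a smoothing extension of $\eta$ makes $A$ regular in the normal direction and streamlines the tangential bootstrap. Once the right functional framework is in place, the remainder is a routine combination of regularity transfer and Sobolev embedding.
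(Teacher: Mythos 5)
Your strategy---flatten the free surface, treat the resulting variable-coefficient elliptic problem on the strip $S=\xT^d\times[-h,0]$, then apply elliptic regularity and Sobolev embedding---is the same as the paper's. There is, however, a genuine imprecision in the coefficient regularity that matters precisely at the threshold $s>d/2+2$. You state that $A$ has coefficients in $H^{s-1}(S)$, but then invoke the sharp regularity $\tilde\phi\in H^{s+1/2}(S)$. The standard elliptic bootstrap with Sobolev-class coefficients cannot gain more than one derivative over the regularity of $A$: with $A\in H^{s-1}(S)$ you can only reach $\tilde\phi\in H^{s}(S)$, and then the embedding $H^{s}(S)\hookrightarrow C^2(\overline S)$ requires $s>(d+1)/2+2=d/2+5/2$, which is strictly stronger than the hypothesis. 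To obtain the full $H^{s+1/2}(S)$ you need $A\in H^{s-1/2}(S)$, and that is exactly what the smoothing (Poisson-type) extension buys: $\tilde\eta\in H^{s+1/2}(S)$ rather than $H^{s}(S)$. In your write-up the smoothing extension is offered only as an optional convenience (``possibly a smoothing one''), but under the paper's hypothesis it is not optional---the paper flags this explicitly, remarking that with the naive extension the argument only works for $s>d/2+5/2$, and that a regularized diffeomorphism (following Lannes) is needed to reach $s>d/2+2$.

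Once that is fixed the proof goes through. A secondary difference is that you aim for the isotropic bound $\tilde\phi\in H^{s+1/2}(S)$ followed by an embedding in dimension $d+1$, whereas the paper works anisotropically: it cites Lannes's Corollary $2.40$ to obtain $\nabla_{x,z}\va\in L^2_z H^{s-1/2}\cap H^1_z H^{s-3/2}$, interpolates to $C^0_z H^{s-1}$, then recovers $\partial_z^2\va$ from the equation and embeds only in the $x$-variables ($H^{s-1}(\xT^d)\subset C^1$, $H^{s-2}(\xT^d)\subset C^0$). Your isotropic statement is formally stronger and therefore harder to extract from the standard references; the anisotropic route is the one that is documented and is tailored to the available estimates. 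If you intend to keep the isotropic formulation, you should at least cite a reference that establishes $\tilde\phi\in H^{s+1/2}(S)$ under these exact hypotheses, or replace the ``routine bootstrap'' claim with the anisotropic argument.
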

\begin{proof}We explain how to deduce this result from the results in Lannes's book \cite{LannesLivre}. 

It is convenient to flatten the free surface. To do so, consider a 
diffeomorphism from $\xR^d\times [-h,0]$ to $\widetilde{\Omega}$,  
of the form 
$(x,z) \mapsto (x, \rho(x,z))$. The simplest choice would be to set
\begin{equation}\label{diffeo}
\rho(x,z)=  \left(1+\frac{z}{h}\right)\eta(x)+z.
\end{equation}
However, since we only assume that $s>d/2+2$ (the analysis is simpler for $s>d/2+5/2$), 
following Lannes~\cite[\S 2.2.2]{LannesLivre}, 
we need to consider a regularized version of \e{diffeo}. Introduce
$$
\zeta(x,z)=c\sum_{n\in\xZ^d}e^{-\delta z^2 |n|^2}\hat\eta_n e_n(x),\qquad 
\hat\eta_n=\int_{\xT^d}\overline{e_n(y)}\eta(y)\, dy,
$$
where $c$ and $\delta$ are positive constants (with $\delta$ small enough), chosen so that
\be\label{n22x}
\zeta\arrowvert_{z=0}=\eta, \quad 
\la \partial_z \zeta(x,z)\ra\le \min \Big\{ \frac{h}{4},\frac{1}{20}\Big\},\quad 
\zeta\in H^{s+\mez}(\xT^d\times [-h,0]).
\ee
Then set
\be\label{reg}
\rho(x,z)=\left(1+\frac{z}{h}\right)\zeta(x,z)+z.
\ee
Notice that $\rho(x,0)=\eta(x)$ and $\rho(x,-h)=-h$. 
Recall that $\eta\ge -h/2$ by assumption. In view of the bound for $\pz\zeta$ in \e{n22x}, we deduce that 
$\zeta\ge-3h/4$ and $\partial_z \rho\ge 1/5$, which proves that $(x,z) \mapsto (x, \rho(x,z))$ is a $C^2$-diffeomorphism. 
The problem thus reduces to establishing that the function $\varphi(x,z)=\phi(x,\rho(x,z))$ 
satisfies $\nabla_{x,z}\va\in C^1(\xT^d\times [-h,0])$. To obtain this result, 
we view $\va$ as a function of $z$ with values in functional spaces. 
Since $s>d/2+2$ and since we chose a special change of variables where $\rho$ is given by \e{reg}, we are in position to apply 
Corollary $2.40$ in Lannes' book \cite{LannesLivre} (with $s$ replaced by $s-1/2$). We 
deduce that
$$
\nabla_{x,z}\va\in L^2_z([-h,0];H^{s-\mez}(\xT^d))\cap H^1_z([-h,0];H^{s-\tdm}(\xT^d)).
$$
As a result
\begin{align*}
&\nabla_{x}\va\in L^2_z([-h,0];H^{s-\mez}(\xT^d))\quad\text{and}\quad \partial_z \nabla_x \va 
\in L^2_z([-h,0];H^{s-\tdm}(\xT^d)),\\
&\partial_z\va\in L^2_z([-h,0];H^{s-\mez}(\xT^d))\quad\text{and}\quad \partial_z \partial_z \va 
\in L^2_z([-h,0];H^{s-\tdm}(\xT^d)),
\end{align*}
and hence
\be\label{reg6}
\nabla_{x,z}\va\in C^0_z([-h,0];H^{s-1}(\xT^d)).
\ee
Then, using the second order equation satisfied by $\va$ one can express $\pz^2\va$ in terms of $(\pz\va,\pz\nabla_x\va,\nabla_x^2\va)$ 
and in terms of $\rho$ and hence, using the standard nonlinear estimates in Sobolev spaces, one deduces that
\be\label{reg7}
\pz^2\va\in C^0_z([-h,0];H^{s-2}(\xT^d)).
\ee
The wanted result $\nabla_{x,z}\va\in C^1(\xT^d\times [-h,0])$ then follows from \e{reg6}--\e{reg7} 
and the Sobolev embeddings $H^{s-1}(\xT^d)\subset C^1(\xT^d)$ and $H^{s-2}(\xT^d)\subset C^0(\xT^d)$. 
\end{proof}

\subsection{The Cauchy problem for periodic functions}

We recall here a well-posedness result for the Cauchy problem for periodic functions. 
There are now quite a lot of papers on this subject 
and we quote below only two results which are related 
to our problem. The main difficulty in the analysis of System \e{system} 
is that writing energy estimates on the function $(\eta,\psi)$ makes appear an apparent
loss of half a derivative. A way to circumvent that difficulty is to bound the energy not 
of $(\eta,\psi)$ but $(\eta,B,V)$ where
\begin{equation}\label{n10b}
B=\py\phi\arrowvert_{y=\eta}=\frac{G(\eta)\psi+\nabla \eta\cdot\nabla\psi}{1+|\nabla\eta|^2},\quad
V=\nabla\phi\arrowvert_{y=\eta}=\nabla\psi-B\nabla\eta.
\end{equation}
If initially $\eta_0$ and $\psi_0$ belong to $H^{s_0+\mez}(\xT^d)$ for some $s_0>(d+1)/2$, then 
$G(\eta_0)\psi_0$ belongs to $H^{s_0-\mez}(\xT^d)$ (cf Proposition~\ref{P7}). 
On the other hand, $H^{s_0-\mez}(\xT^d)$ is an algebra for any $s_0>(d+1)/2$. 
It thus follows from usual nonlinear estimates in Sobolev spaces that 
\begin{equation}\label{n98}
B_0=\frac{G(\eta_0)\psi_0+\nabla\eta_0 \cdot\nabla\psi_0}{1+|\nabla\eta_0|^2}\in H^{s_0-\mez}(\xT^d),\quad
V_0=\nabla\psi_0-B_0\nabla\eta_0\in H^{s_0-\mez}(\xT^d).
\end{equation}
The following result shows that one can propagate the fact that 
$B_0$ and $V_0$ are in $H^{s_0}(\xT^d)$ for $s_0>d/2+1$ (and this is the key point to circumvent the 
apparent loss of half of 
derivative in the study of the Cauchy problem).

\begin{theo}[from Alazard-Burq-Zuily \cite{ABZ3}]\label{T2a}
Let $d\ge 1$, $s_0>d/2+1$ and consider an initial data $(\eta_0,\psi_0)$ such that
\be\label{n10a}
(\eta_0,\psi_0,V_0,B_0)\in X^{s_0}\defn 
H^{s_0+\mez}(\xT^d)\times H^{s_0+\mez}(\xT^d)\times H^{s_0}(\xT^d)\times H^{s_0}(\xT^d).
\ee
Then there exists a time $T>0$ such that the Cauchy problem 
for \e{system} with initial data $(\eta_0,\psi_0)$ 
has a unique solution 
$(\eta,\psi)$ such that $(\eta,\psi,V,B)\in C^{0}([0,T];X^{s_0})$.
\end{theo}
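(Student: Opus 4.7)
The strategy is to follow the paradifferential approach for quasilinear water waves developed in \cite{ABZ3}, adapted here to the periodic setting with flat bottom (where the elliptic side has already been set up by the change of variables used in the proof of Proposition~\ref{P6}). The central difficulty is the apparent loss of half a derivative when one tries to close an energy estimate directly on $(\eta,\psi)$: the equation $\partial_t\eta=G(\eta)\psi$ only yields $\partial_t\eta\in H^{s_0-\mez}$ for $\psi\in H^{s_0+\mez}$, since $G(\eta)$ is of order one. The role of the good unknowns $B$ and $V$ defined in \e{n10b} is precisely to restore the balance, because they encode the traces of $\partial_y\phi$ and $\nabla\phi$ at the free surface and together control the full trace of $\nabla_{x,y}\phi$.

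The first step is to establish a priori estimates in $X^{s_0}$. The plan is to paralinearize $G(\eta)\psi$ in the spirit of Alazard--M\'etivier, writing schematically
$$
G(\eta)\psi = T_\lambda\bigl(\psi - T_B\eta\bigr) - \nabla\cdot(T_V\eta) + R(\eta,\psi),
$$
where $\lambda$ is a symbol of order one built from $\eta$ and $R$ is a smoothing remainder. Injecting this identity, together with the analogous paralinearization of the Bernoulli nonlinearity, into \e{system} turns it into a $2\times 2$ paradifferential system in the good unknowns $(\eta,\psi-T_B\eta)$. Symmetrizing the principal symbol diagonalizes the system modulo lower order terms into a scalar evolution of the form $(\partial_t + T_V\cdot\nabla + i T_\gamma)u = f$ with $\gamma$ of order $1/2$, reflecting the dispersion relation $\omega(k)\sim\sqrt{g|k|}$ of gravity waves. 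A standard commutator-based energy estimate on this symmetric equation yields control of $u$ in $H^{s_0}$, and unwinding the changes of unknowns, together with the identities relating $(\eta,\psi,B,V)$, gives a tame bound
$$
\frac{d}{dt}\lA (\eta,\psi,V,B)\rA_{X^{s_0}} \le F\bigl(\lA(\eta,\psi,V,B)\rA_{X^{s_0}}\bigr),
$$
which in turn provides a lifespan $T>0$ depending only on the initial $X^{s_0}$ norm.

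With the a priori estimate in hand, I would construct the solution by a Friedrichs scheme: replace the nonlinearities by their composition with a Fourier truncation $J_\eps$, solve the resulting Sobolev-valued ODE globally for each $\eps>0$, and use the uniform bounds to pass to the limit $\eps\to 0$ on the common interval $[0,T]$ via Aubin--Lions compactness and weak-$*$ convergence to identify the nonlinear terms (continuous dependence of $G(\eta)$ in $\eta$ follows from the elliptic regularity statements recalled in Section~\ref{S:22}). Uniqueness is obtained by applying the same paradifferential symmetrization to the difference of two solutions at one level of regularity below $s_0$, and strong continuity in time in $X^{s_0}$ follows from a Bona--Smith approximation of the initial data.

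The hard part is the paralinearization of $G(\eta)$ with precise control of the remainder, which requires tame bounds on the harmonic extension $\phi$ in the flattened strip. In the periodic setting this is done using the regularized diffeomorphism \e{reg}, after which one must verify that the principal and subprincipal symbols of the flattened DtN match the formula above, with a remainder bounded in $H^{s_0}$ by a polynomial in $\lA(\eta,\psi,V,B)\rA_{X^{s_0}}$. The passage from $\xR^d$ to $\xT^d$ does not affect the symbolic calculus, which is local in frequency; it only replaces Fourier multipliers by their discrete counterparts, and is routine.
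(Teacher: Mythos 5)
The paper does not prove Theorem~\ref{T2a}: it is stated as an imported result from Alazard--Burq--Zuily \cite{ABZ3}, and no proof is given in this article. Your sketch correctly reproduces the strategy of that reference --- good unknowns $(B,V)$ to circumvent the half-derivative loss, Alazard--M\'etivier paralinearization of $G(\eta)$ with principal part $T_\lambda(\psi-T_B\eta)-\nabla\cdot(T_V\eta)$, symmetrization into a scalar equation with a half-order dispersive symbol, tame energy estimates, and an approximation scheme plus Bona--Smith argument for continuity in time --- so your proposal follows essentially the same route as the cited source, with the minor and correctly noted adaptation from $\xR^d$ to $\xT^d$.
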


\subsection{Extension to periodic functions}\label{S:canal}

We recall here from Alazard-Burq-Zuily \cite{ABZ4} how to solve 
the Cauchy problem for the water-wave equations in a rectangular tank. 
Let $d\in \{1,2\}$, $Q=[0,L_1]$ if $d=1$ and $Q=[0,L_1]\times [0,L_2]$ if $d=2$. 
One denotes by $\nu$ the outward unit normal to $Q$ ($\nu=(1,0)$ if $x_1=L_1$, $\nu=(0,-1)$ if $x_2=0$,...). 

As recalled in the introduction, 
the key observation is the following: for smooth enough solutions, 
the angle between the free surface and the vertical boundary of the tank is a right angle (see Section~$6$ in \cite{ABZ4}). 
This means that $\partial_\nu \eta=0$ on $\partial Q$. 
Now observe that $\nabla\psi=(\nabla\phi)\arrowvert_{y=\eta} +(\partial_{y}\phi )\arrowvert_{y=\eta}\nabla\eta$. 
Since 
$\partial_{n}\phi=0$ on the boundary of the tank $\mathscr{R}=Q\times [-h,+\infty)$, we conclude that 
$\partial_\nu \psi=0$ on $\partial Q$.
\begin{defi}
Given $1\le d\le 2$ and $\sigma>3/2$, one denotes by $H^\sigma_e(Q)$ the space
$$
H^\sigma_e(Q)=\{v\in H^\sigma(Q): \partial_\nu v=0\text{ on }\partial Q\}.
$$
\end{defi}

Consider the Cauchy problem for initial data $\eta_0,\psi_0\colon Q \rightarrow \xR$ in $H^\sigma_e(Q)$ for some $\sigma$ large enough.
Following Boussinesq (see~\cite[page 37]{Boussinesq}), the idea is that, in a general setting, one can extend these initial data 
to periodic functions defined for $x\in\xR^d$, solve the Cauchy problem for these extended initial data 
and then obtain a solution to the water-wave equations in a canal by considering the restrictions of these solutions.

\begin{defi}\label{De:tilde}
Let $v\colon Q\rightarrow\xR$. If $d=2$, 
we define $\widetilde{v}\colon \xR^2\rightarrow \xR$ as the unique extension of $v$ satisfying
\begin{align}
&\widetilde{v}(x)=v(x)\quad \forall x\in Q,\\
&\widetilde{v}(-x_1,x_2)=\widetilde{v}(x_1,x_2)=\widetilde{v}(x_1,-x_2) \quad \forall x\in \xR^2,\label{n210b}\\
&\widetilde{v}(x_1+2L_1,x_2)=\widetilde{v}(x)=\widetilde{v}(x_1,x_2+2L_2)\quad \forall x\in \xR^2.\label{n210c}
\end{align}
Similarly, when $d=1$, 
$\widetilde{v}\colon \xR\rightarrow \xR$ is defined by
\begin{align}
&\widetilde{v}(x)=v(x)\quad \forall x\in Q,\\
&\widetilde{v}(-x)=\widetilde{v}(x) \quad \forall x\in \xR,\label{n211b}\\
&\widetilde{v}(x+2L_1)=\widetilde{v}(x)\quad \forall x\in \xR.\label{n211c}
\end{align}
\end{defi}
\begin{defi}\label{D:S}
Given $\sigma\in \xR$, 
denote by $H^\sigma_\pair(\xT^d)$ the Sobolev space of those periodic functions which are even (satisfying \e{n210b}-\e{n210c} when 
$d=2$ and \e{n211b}-\e{n211c} for $d=1$). 
\end{defi}
Now consider the case $d=1$ (to fix notations) and $u\in H^\sigma_\pair(\xT)$ with $\sigma>d/2+1=3/2$. 
Then, $\partial_{x}u(x)$ is $C^0$ and odd 
which implies that 
$\partial_{x} u(0)=0$. 
Moreover, one has $u(L_1+\eps)=u(-L_1+\eps)=u(L_1-\eps)$ and hence one has also 
$\partial_{x}u(L_1)=0$ 
(then $\partial_{x}u(nL_1)=0$ for any $n\in\xZ$). We have a similar result when $d=2$. This proves that
\be\label{n247}
\forall \sigma>\frac{d}{2}+1,~\forall v\in H^\sigma_e(\xT^d),\qquad 
v\arrowvert_Q\in H^\sigma_e(Q).
\ee
Conversely, the following result shows that any function $v$ in $H^\sigma_e(Q)$, with $\sigma\in (3/2,7/2)$, 
is the restriction to $Q$ of a function belonging to 
$H^\sigma_e(\xT^d)$.

\begin{prop}[from Prop.~$6.5$ in \cite{ABZ4}]\label{P3}
Let $1\le d\le 2$ and $\frac{3}{2} <\sigma < \frac 7 2$. 
Then the map $v\mapsto \widetilde{v}$ is continuous from $H^\sigma_e(Q)=\{v\in H^\sigma(Q): \partial_\nu v=0\text{ on }\partial Q\}$  to $H^\sigma_{\pair}(\xT^d)$.
\end{prop}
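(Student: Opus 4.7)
The plan is to exploit the fact that the Fourier cosine basis on $Q$ is simultaneously a Neumann eigenbasis on $Q$ and an even-periodic basis on $\xT^d$, so that the extension $v\mapsto\widetilde{v}$ acts as the identity on coefficients and the continuity reduces to a single norm equivalence. Concretely, the functions $e_{n,m}(x)\defn \cos(\pi n x_1/L_1)\cos(\pi m x_2/L_2)$ form an orthonormal basis of $L^2(Q)$ and are already even and $2L_1\times 2L_2$-periodic. Hence if $v=\sum a_{nm}e_{n,m}\in L^2(Q)$, then the extension $\widetilde{v}$ of Definition~\ref{De:tilde} is given on all of $\xR^d$ by the same series. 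It therefore suffices to prove, for $v\in H^\sigma_e(Q)$ with $3/2<\sigma<7/2$, the double equivalence
\begin{equation*}
\lA v\rA_{H^\sigma(Q)}^2\asymp \sum_{n,m}(1+n^2+m^2)^\sigma|a_{nm}|^2 \asymp \lA \widetilde{v}\rA_{H^\sigma(\xT^d)}^2.
\end{equation*}
The right-hand equivalence is the standard Fourier characterisation of $H^\sigma(\xT^d)$, restricted to the even subspace.

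For the left-hand equivalence I would identify $H^\sigma_e(Q)$ with the domain $D((I-\Delta_N)^{\sigma/2})$ of a fractional power of the Neumann Laplacian $\Delta_N$ on $Q$. Since $\Delta_N$ is diagonalised by $e_{n,m}$ with eigenvalue $\pi^2(n^2/L_1^2+m^2/L_2^2)$, one has $\lA (I-\Delta_N)^{\sigma/2}v\rA_{L^2(Q)}^2\asymp\sum(1+n^2+m^2)^\sigma|a_{nm}|^2$, and the theory of powers of positive self-adjoint elliptic operators (Lions--Magenes) yields
\begin{equation*}
D\bigl((I-\Delta_N)^{\sigma/2}\bigr)=\{v\in H^\sigma(Q):\partial_\nu v=0\text{ on }\partial Q\}=H^\sigma_e(Q),
\end{equation*}
in the range $3/2<\sigma<7/2$. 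The single boundary condition $\partial_\nu v=0$ is inherited from $D(\Delta_N)=\{v\in H^2:\partial_\nu v=0\}$; the next compatibility condition $\partial_\nu\Delta v=0$, present in $D(\Delta_N^2)$, is not detected by the trace as long as $\sigma<7/2$.

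The hard step will be this last identification in the non-integer regime. An alternative that avoids the fractional-power machinery is a reflection-and-interpolation argument: by iteration it is enough to handle a single even reflection across one face of $Q$. The integer cases $\sigma=2,3$ can be checked by hand, the assumption $\partial_\nu v=0$ being precisely what ensures that odd-order derivatives match across the face, while even-order derivatives match automatically by symmetry; for $\sigma=0,1$ no boundary condition is required. Interpolation between these cases then covers the full range $(3/2,7/2)$. The two excluded endpoints are structural and not artefacts of the proof: $\sigma=3/2$ is the threshold below which $\partial_\nu v$ has no trace at all, and $\sigma=7/2$ is the threshold at which a second compatibility condition $\partial_\nu\Delta v=0$ would be required for the even reflection to remain in $H^\sigma$.
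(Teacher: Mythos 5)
The paper does not prove this proposition at all---it cites it verbatim from [ABZ4, Prop.~6.5]---so your argument can only be assessed on its own terms. Your structural observation is exactly right and is the key point: the cosine basis is simultaneously the Neumann eigenbasis on $Q$ and an even-periodic Fourier basis on $\xT^d$, so the extension acts as the identity on coefficients and the proposition reduces to a norm equivalence. The explicit by-hand checks at $\sigma=2,3$ and the identification of the endpoints $\sigma=3/2,\,7/2$ as the thresholds at which the first, respectively the second, compatibility condition becomes visible are also correct.

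There is, however, a genuine gap in the interpolation step. You propose to interpolate among $H^0$, $H^1$, $H^2_e$, $H^3_e$, but the top space in this list is $H^3_e$, so interpolation can only reach indices $\sigma\le 3$; the range $\sigma\in(3,7/2)$ is not covered. One cannot simply append $H^4_e$ (with only $\partial_\nu v=0$) as a further endpoint: the even reflection does not map $H^4_e(Q)$ into $H^4(\xT^d)$, only into $H^{7/2-\eps}(\xT^d)$, since $\partial_{x_1}^3\widetilde v$ is odd with an unconstrained trace $\partial_{x_1}^3 v(0,\cdot)$; interpolating that weaker bound against $H^3_e\to H^3$ loses derivatives and does not yield $H^\sigma_e\to H^\sigma$ for $\sigma>3$. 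The fix is to introduce $H^4_{ee}(Q)=\{v\in H^4(Q):\partial_\nu v=0,\ \partial_\nu\Delta v=0\}$ with both compatibility conditions, verify by the same hand calculation that the reflection maps it boundedly into $H^4(\xT^d)$, and then invoke a Lions--Magenes identification $\bigl[H^3_e(Q),H^4_{ee}(Q)\bigr]_\theta=H^{3+\theta}_e(Q)$ for $\theta<1/2$ (the second condition is not detected below $7/2$). That identification is itself nontrivial on a domain with corners; for the rectangle it can be obtained from the explicit cosine diagonalization of the Neumann Laplacian and elliptic regularity on convex domains, which circles back, in disguise, to the fractional-power argument of your first paragraph. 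Either route can be made to work, but both require the step you currently skip.
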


We are now in position to define $G(\eta)\psi$ when $\eta$ and $\psi$ belong to some space $H^\sigma_e(Q)$.
To do so, let $\widetilde{\eta},\widetilde{\psi}$ be as given by Definition~\ref{De:tilde}. If $3/2<\sigma<7/2$, 
it follows from Proposition~\ref{P3} that $\widetilde{\eta},\widetilde{\psi}$ belong to $H^\sigma_e(\xT^d)$. 
If one further assumes that $\sigma>d/2+1$, as recalled in \S\ref{S:22}, there exists a unique periodic variational solution to
\be\label{n241a}
\begin{aligned}
&\Delta_{x,y}\widetilde\phi=0\quad\text{in }\{(x,y)\in \xR^d\times\xR\, :\, -h\le y\le\widetilde\eta(x)\},\\
&\widetilde\phi(x,y)\arrowvert_{y=\widetilde\eta(x)}=\widetilde\psi(x), 
\quad \partial_y\widetilde\phi\arrowvert_{y=-h}=0.
\end{aligned}
\ee

\begin{defi}\label{D:8}
Consider $\eta$ and $\psi$ in $H^\sigma_e(Q)$ with $d/2+1<\sigma<7/2$. 
We define $G(\eta)\psi$ (resp.\ $\phi$) by taking the restriction to $Q$ (resp.\ $\Omega$):
\be\label{n170a}
\phi=\widetilde{\phi}\arrowvert_{\Omega},\quad G(\eta)\psi=G(\widetilde{\eta})\widetilde{\psi}\arrowvert_{Q}. 
\ee
where $\Omega= \{(x,y)\in Q\times\xR\, :\, -h\le y\le\eta(x)\}$.
\end{defi}

\begin{prop}\label{P6B}
Assume that $(\eta,\psi)\in H^{\sigma}_{\pair}(Q)\times H^\sigma_{\pair}(Q)$ with 
$\sigma\in (d/2+2,7/2)$ where $d=1,2$. Then 
\be\label{n353}
\nabla_{x,y}\phi\in C^1(\overline{\Omega}),\quad
\partial_n\phi=0\text{ on }\partial\mathscr{R}\cap \partial\Omega,
\ee
and
\be\label{n354}
G(\eta)\psi\in H^{\sigma-1}_e(Q).
\ee
\end{prop}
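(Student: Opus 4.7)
The plan is to derive both assertions from the periodic analogues (Propositions~\ref{P6} and~\ref{P7}) applied to the even periodic extensions provided by Definition~\ref{De:tilde} and Proposition~\ref{P3}.

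First I would extend. Since $\sigma\in(d/2+2,7/2)\subset(3/2,7/2)$, Proposition~\ref{P3} gives $\widetilde\eta,\widetilde\psi\in H^\sigma_{\pair}(\xT^d)$, and the hypothesis $\sigma>d/2+2$ then allows me to apply Proposition~\ref{P6} to the periodic variational solution $\widetilde\phi$ of~\e{n241a}. This yields $\nabla_{x,y}\widetilde\phi\in C^1(\overline{\widetilde\Omega})$, and since $\overline\Omega\subset\overline{\widetilde\Omega}$ and $\phi=\widetilde\phi\arrowvert_\Omega$ (Definition~\ref{D:8}), the first half of~\e{n353} follows.

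Next I would establish the boundary condition $\partial_n\phi=0$ on $\partial\Qr\cap\partial\Omega$. The bottom case $\partial_y\widetilde\phi\arrowvert_{y=-h}=0$ is built into~\e{n241a}. For the vertical walls I would use a uniqueness-plus-reflection argument: because $\widetilde\eta$ is even in $x_1$, the domain $\widetilde\Omega$ is invariant under $(x_1,x_2,y)\mapsto(-x_1,x_2,y)$; the function $\widehat\phi(x,y)\defn\widetilde\phi(-x_1,x_2,y)$ is then harmonic in $\widetilde\Omega$ and, since $\widetilde\psi$ is even in $x_1$, it satisfies the same boundary value problem~\e{n241a} as $\widetilde\phi$. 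Uniqueness of the variational solution forces $\widehat\phi=\widetilde\phi$, so $\widetilde\phi$ is even in $x_1$ (and, by the same argument, in $x_2$ when $d=2$). Combined with the $C^1$ regularity obtained above, evenness gives $\partial_{x_1}\widetilde\phi(0,x_2,y)=0$, and combining evenness with the $2L_1$-periodicity from Definition~\ref{De:tilde} gives $\widetilde\phi(L_1+\eps,\cdot)=\widetilde\phi(-L_1+\eps,\cdot)=\widetilde\phi(L_1-\eps,\cdot)$, whence $\partial_{x_1}\widetilde\phi(L_1,x_2,y)=0$. The analogous argument handles $\{x_2=0\}$ and $\{x_2=L_2\}$.

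Finally, for~\e{n354}: Proposition~\ref{P7} gives $G(\widetilde\eta)\widetilde\psi\in H^{\sigma-1}(\xT^d)$, and the formula $G(\widetilde\eta)\widetilde\psi=\partial_y\widetilde\phi\arrowvert_{y=\widetilde\eta}-\nabla\widetilde\eta\cdot\nabla\widetilde\phi\arrowvert_{y=\widetilde\eta}$, together with the evenness of $\widetilde\phi$ and $\widetilde\eta$ and the fact that horizontal derivatives of even functions are odd, shows that $G(\widetilde\eta)\widetilde\psi$ is again even and $2L_i$-periodic. Since $\sigma>d/2+2$ translates into $\sigma-1>d/2+1$, applying the observation~\e{n247} to $G(\widetilde\eta)\widetilde\psi$ shows that its restriction to $Q$ belongs to $H^{\sigma-1}_e(Q)$, which is exactly~\e{n354}.

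The main obstacle is in Step~2: passing from the abstract statement ``$\widetilde\phi$ is even'' to the pointwise Neumann conditions on the vertical walls, and in particular to the condition at $x_i=L_i$, requires both the $C^1$ regularity from Proposition~\ref{P6} and the careful use of $2L_i$-periodicity of the extended data. The uniqueness step is unproblematic because it only needs the variational framework recalled in \S\ref{S:22}, whose hypotheses are comfortably satisfied for $\sigma\in(d/2+2,7/2)$.
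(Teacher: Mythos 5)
Your proposal is correct and follows essentially the same route as the paper's proof: extend to even periodic data via Proposition~\ref{P3}, deduce $C^1$ regularity from Proposition~\ref{P6}, obtain evenness of $\widetilde\phi$ by combining the evenness and periodicity of $\widetilde\eta,\widetilde\psi$ with uniqueness of the variational solution, read off the vanishing of the normal derivative on the walls from the resulting oddness of $\px_i\widetilde\phi$, and finally conclude that $G(\widetilde\eta)\widetilde\psi$ inherits the even symmetry so that \e{n247} applies. The only minor imprecision is that the $2L_i$-periodicity of $\widetilde\phi$ is not literally ``from Definition~\ref{De:tilde}'' (which only concerns the data $\widetilde\eta,\widetilde\psi$) but follows, as does evenness, from the same uniqueness argument applied to the translated solution; the paper is terse on this point too, so this is not a gap.
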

\begin{proof}
Let us prove \e{n353}. The fact that $\nabla_{x,y}\phi\in C^1(\overline{\Omega})$ follows from Proposition~\ref{P6}. 
It remains only to prove that 
$\partial_n\phi=0$ on $\partial\mathscr{R}\cap \partial\Omega$. By definition of $\phi$, 
we have to prove that $\partial_n\widetilde{\phi}(x,y)=0$ for any $(x,y)\in \overline{\widetilde{\Omega}}$ with 
$x\in \partial Q$. To simplify notations, assume that $d=1$ and $L_1=1$. Then, as 
already mentioned after the statement of Definition~\ref{D:S}, 
notice that $\widetilde\eta(x)=\widetilde\eta(-x)$ and $\widetilde\eta(L_1-x)=\widetilde\eta(L_1+x)$ 
since $\widetilde\eta$ is $2L_1$-periodic and even. 
Since $\sigma>3/2$, one has $\widetilde\eta\in H^\sigma_e(\xT)\subset C^1(\xT)$ and one deduces that
$\px\widetilde\eta(0)=0=\px\widetilde\eta(1)$. 
Similarly $\widetilde\psi(x)=\widetilde\psi(-x)$, $\widetilde\psi(L_1-x)=\widetilde\psi(L_1+x)$ and, by uniqueness of the harmonic extension, 
$\widetilde\phi$ satisfies the same symmetries in $x$. Consequently, $\px\widetilde\phi$ 
is odd in $x$ and, 
since $\px\widetilde\phi$ is continuous, we infer that
$$
\px\widetilde\phi(0,y)=0 \quad \forall y\in [-h,\eta(0)],\quad 
\px\widetilde\phi(L_1,y)=0 \quad \forall y\in [-h,\eta(L_1)].
$$
This completes the proof of \e{n353}. 

It remains to prove \e{n354}. It follows from Proposition~\ref{P7}Ê
and Proposition~\ref{P3} 
that 
$G(\widetilde\eta)\widetilde\psi\in H^{\sigma-1}(\xT^d)$. Therefore, in view of \e{n247}, it remains only to prove 
that $G(\widetilde\eta)\widetilde\psi$ is even (see Definition~\ref{D:S}). This in turn follows from the definition 
$G(\widetilde\eta)\widetilde\psi=\py\widetilde\phi-\nabla\widetilde\eta\cdot\nabla\widetilde\phi\big\arrowvert_{y=\widetilde\eta}$ 
and the symmetries of $\widetilde{\phi}$ that we already used.
\end{proof}

\subsection{The Cauchy problem in a rectangular tank}
We are now in position to define what is a solution of the equations in a rectangular tank and to prove the existence and uniqueness of such solutions.

\begin{defi}\label{D:10}
Let $d\in \{1,2\}$ and $s>d/2+2$ and consider
$$
(\eta,\psi)\in C^0([0,T];H^s_e(Q)\times H^s_e(Q)).
$$
Then $(\eta,\psi)$ is a solution of the Craig--Sulem--Zakharov system \e{system} with initial data $(\eta_0,\psi_0)$ 
if and only if the extended functions $(\widetilde{\eta},\widetilde\psi)$ satisfies
\begin{equation}\label{system-per}
\left\{
\begin{aligned}
&\partial_t \widetilde\eta=G(\widetilde\eta)\widetilde\psi,\\
&\partial_t \widetilde\psi + g\widetilde\eta+ \frac{1}{2}|\nabla\widetilde\psi|^2
-\frac{1}{2(1+| \nabla\widetilde\eta|^2)}\bigl(G(\widetilde\eta)\widetilde\psi+\nabla\widetilde\eta\cdot \nabla\widetilde\psi\bigr)^2= 0,\\
&(\widetilde\eta,\widetilde\psi)\arrowvert_{t=0}=(\widetilde\eta_0,\widetilde\psi_0).
\end{aligned}
\right.
\end{equation}
\end{defi}
\begin{prop}\label{T2}
Let $d\in \{1,2\}$ and $s\in (d/2+2,7/2)$. Consider initial data $\eta_0,\psi_0$ in $H^s_e(Q)$ and 
denote by 
$\widetilde{\eta}_0,\widetilde{\psi}_0$ 
the extensions as given by Definition~$\ref{De:tilde}$. 
If
\be\label{n10a-bis}
\widetilde B_0=
\frac{G(\widetilde{\eta}_0)\widetilde{\psi}_0+\nabla\widetilde{\eta}_0\cdot\nabla\widetilde{\psi}_0}
{1+|\nabla\widetilde{\eta}_0|^2}\in H^{s-\mez}(\xT^d),\quad
\widetilde V_0=\nabla\widetilde{\psi}_0-\widetilde B_0\nabla\widetilde{\eta}_0\in H^{s-\mez}(\xT^d),
\ee
then there exists $T>0$ and a unique 
solution $(\widetilde{\eta},\widetilde\psi)\in C^{0}\big([0,T];H^{s}_{\pair}(\xT^d)\times H^{s}_{\pair}(\xT^d)\big)$ to 
the Cauchy problem \e{system-per} such that 
$$
(\widetilde B,\widetilde V)
\in C^{0}\big([0,T]; H^{s-\mez}(\xT^d)\times H^{s-\mez}(\xT^d)\big),
$$
where
$$
\widetilde B=\frac{G(\widetilde{\eta})\widetilde\psi+\nabla \widetilde{\eta}\cdot\nabla\widetilde\psi}{1+|\nabla\widetilde{\eta}|^2},\quad
\widetilde V=\nabla\widetilde\psi-\widetilde B\nabla\widetilde \eta.
$$
\end{prop}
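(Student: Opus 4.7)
The strategy is to reduce Proposition~\ref{T2} to the periodic Cauchy theorem of Alazard--Burq--Zuily (Theorem~\ref{T2a}) applied to the symmetric extensions of the data, and then to check that the resulting solution automatically inherits the even-reflection symmetries prescribed by Definition~\ref{D:S}. Uniqueness in the claimed class will then follow from the uniqueness part of Theorem~\ref{T2a}.

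First I would set $s_0 \defn s-\mez$. Since $s>d/2+2$, one has $s_0> d/2+\tdm>d/2+1$, which is the range allowed in Theorem~\ref{T2a}. By Proposition~\ref{P3}, the extensions $\widetilde\eta_0,\widetilde\psi_0$ lie in $H^s_{\pair}(\xT^d)\subset H^{s_0+\mez}(\xT^d)$; together with hypothesis \eqref{n10a-bis} this places $(\widetilde\eta_0,\widetilde\psi_0,\widetilde V_0,\widetilde B_0)$ in $X^{s_0}$. Theorem~\ref{T2a} then supplies a time $T>0$ and a unique solution $(\widetilde\eta,\widetilde\psi)\in C^0([0,T];X^{s_0})$ of \eqref{system-per}, which already yields the required regularity $(\widetilde B,\widetilde V)\in C^0([0,T];H^{s-\mez}\times H^{s-\mez})$.

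It remains to propagate the even symmetries so that $(\widetilde\eta(t),\widetilde\psi(t))\in H^s_{\pair}(\xT^d)\times H^s_{\pair}(\xT^d)$ for all $t\in[0,T]$. Let $\sigma\colon \xR^d\to\xR^d$ be any generator of the symmetry group of Definition~\ref{De:tilde}, that is, a reflection $x_i\mapsto -x_i$ or a translation $x_i\mapsto x_i+2L_i$. Extending $\sigma$ to $\xR^d\times\xR$ by the identity on the vertical variable, the fluid domain \eqref{n231} attached to $\widetilde\eta$ is mapped onto the fluid domain attached to $\widetilde\eta\circ\sigma$, and the Laplace equation along with the mixed boundary conditions in \eqref{n170} are preserved under pull-back. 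Uniqueness of the variational solution of \eqref{n170} therefore yields the intertwining identity
\[
G(\widetilde\eta\circ\sigma)(\widetilde\psi\circ\sigma)=(G(\widetilde\eta)\widetilde\psi)\circ\sigma,
\]
from which it follows that the system \eqref{system-per} is invariant under $(\widetilde\eta,\widetilde\psi)\mapsto(\widetilde\eta\circ\sigma,\widetilde\psi\circ\sigma)$. Since the initial data are $\sigma$-invariant by construction, the uniqueness clause of Theorem~\ref{T2a} forces the solution itself to be $\sigma$-invariant for each generator $\sigma$, hence even in the sense of Definition~\ref{D:S}. Uniqueness within the class stated by the proposition is a direct consequence of the uniqueness in Theorem~\ref{T2a} applied to the extensions.

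The only nontrivial point is the intertwining identity for $G$ under non-smooth isometries, but this is a routine consequence of the invariance of the Dirichlet energy used to define the variational solution: if $\widetilde\phi$ is the variational solution of \eqref{n170} with surface $\widetilde\eta$ and boundary data $\widetilde\psi$, then $\widetilde\phi\circ\sigma^{-1}$ (extended as the identity in $y$) solves the analogous problem with surface $\widetilde\eta\circ\sigma$ and data $\widetilde\psi\circ\sigma$, and uniqueness identifies them. All other steps are bookkeeping in the symmetric periodic Sobolev spaces.
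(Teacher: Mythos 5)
Your proposal is correct and follows essentially the same route as the paper: apply Theorem~\ref{T2a} to the periodic extensions with $s_0=s-\mez$, then propagate the evenness by noting the invariance of \eqref{system-per} under the reflections $x_k\mapsto -x_k$ and invoking uniqueness. The paper's proof is terser (it treats only the reflection $x_1\mapsto -x_1$ as a representative case and leaves the intertwining of $G(\eta)$ with the symmetry implicit), while you spell out the intertwining identity and also mention translations, which are automatic from periodicity; the substance is identical.
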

\begin{proof}In view of Theorem~\ref{T2a} it remains only to prove that 
$\widetilde\eta$ and $\widetilde\psi$ 
are even in $x_k$ for $1\le k\le d$ (satisfying \e{n210b}-\e{n210c} when 
$d=2$ and \e{n211b}-\e{n211c} for $d=1$). 
To do so, assume that $d=2$ and $k=1$. Set 
$\eta^\sharp(t,x)=\widetilde\eta(t,-x_1,x_2)$, 
$\psi^\sharp(t,x)=\widetilde\psi(t,-x_1,x_2)$. 
Since the equations are invariant by the symmetry $x_1\mapsto -x_1$, we get that 
$(\eta^\sharp,\psi^\sharp)$ satisfies the same equations. Moreover, one has 
$\eta^\sharp\arrowvert_{t=0}=\widetilde{\eta}_0$, $\psi^\sharp\arrowvert_{t=0}=\widetilde{\psi}_0$ since 
$\widetilde{\eta}_0$ and $\widetilde{\psi}_0$ are even in $x_k$. 
By uniqueness, we deduce that $\eta^\sharp=\widetilde\eta$ and $\psi^\sharp=\widetilde\psi$, which is the desired property.
\end{proof}
\begin{rema}Let us give three cases where the assumptions are satisfied. 
$i)$ $\eta_0,\psi_0$ are finite linear combinations of terms of the form \e{n302}.

$ii)$ Consider the case where $\psi_0=0$. Then $\widetilde{\psi}_0$ vanishes and hence 
$\widetilde B_0=0$ and $\widetilde V_0=0$ so Assumption~\e{n10a-bis}Ê
holds whenever $\widetilde{\eta}_0\in H^s(\xT^d)$ (and hence for any $\eta_0\in H^s_e(Q)$). 

$iii)$ Assume that $d=1$ and $\eta_0,\psi_0\in H^\sigma_e(Q)$ for some $\sigma\in (3,7/2)$. 
Proposition~\ref{P3} implies that $\widetilde{\eta}_0$ and $\widetilde{\psi}_0$ belong to $H^{\sigma}_{\pair}(\xT^d)^2$. Since $\sigma>3$, 
it follows from Proposition~\ref{P7} below and the usual nonlinear estimates in Sobolev spaces that 
$\widetilde B_0$ and $\widetilde V_0$ are in $H^{\sigma-1}(\xT^d)$. 
So we may apply the assumptions of Corollary~\ref{T2}Ê
with $s=\sigma-1/2$.
\end{rema}

\section{Pohozaev identity}\label{S:Pohozaev}

In this section the time is seen as a parameter and we skip it. As above, we 
denote by $d\in \{1,2\}$ 
the dimension of the free surface. 
For a 3D (resp.\ 2D) fluid one has $d=2$ (resp.\ $d=1$) 
and we use the notation 
$\nabla=(\partial_{x_1},\partial_{x_2})$ (resp.\ $\nabla=\px$). 
Our goal is to prove a Pohozaev type identity for 
$G(\eta)\psi$, that is we want to compute 
$$
\int_Q (G(\eta)\psi)(\poh \psi)\, dx.
$$
Recall from the previous section that $G(\eta)\psi$ is defined by taking the restrictions to $Q$ of a periodic function 
$G(\widetilde \eta)\widetilde \psi$ 
(see Definition~\ref{D:8}). Consequently, even if we are working in a bounded domain, we are essentially handling 
periodic functions defined on $\xR^d$.  Recall also that one has 
$\partial_n\phi=0$ on $\partial\mathscr{R}\cap \partial\Omega$ (see Proposition~\ref{P6B}). 


\begin{prop}[Pohozaev identity]\label{Pohozaev}
Assume that $(\eta,\psi)$ belongs to $H^{\sigma}_{\pair}(Q)\times H^\sigma_{\pair}(Q)$ for some $\sigma>d/2+2$. 
Denote by $R$ the solid part of $\partial\Omega$:
$$
R\defn \partial\mathscr{R}\cap\partial\Omega,
$$
($\mathscr{R}=Q\times [-h,+\infty)$) and denote by $n$ the unit outward normal to~$\partial\Omega$.
Then, 
\be\label{n21}
\begin{aligned}
&\int_Q (G(\eta)\psi)(\poh \psi)\, dx\\
&\qquad =\mez\int_{R}\la \nabla_{x,y}\phi\ra^2 \begin{pmatrix}x \\ y\end{pmatrix}\cdot n\, dS-\frac{d-1}{2}\iint_\Omega \la \nabla_{x,y}\phi\ra^2\, dxdy\\
&\qquad \quad + \mez \int_Q(\eta-\poh\eta)\big[ V^2+B^2 -2 BG(\eta)\psi\big]\, dx,
\end{aligned}
\ee
where $\phi$ is given by \e{n170a}, 
$B=(\py\phi)\arrowvert_{y=\eta(x)}$ and $V=(\nabla_x\phi)\arrowvert_{y=\eta(x)}$ (Proposition~\ref{P6B} implies that 
$\nabla_{x,y}\phi\in C^1(\overline{\Omega})$ and hence all the terms are well-defined). 
\end{prop}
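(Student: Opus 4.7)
The strategy is the classical Rellich--Pohozaev multiplier computation applied to the harmonic function $\phi$ in $\Omega$, combined with the boundary conditions gathered in Proposition~\ref{P6B}. The starting point is the pointwise algebraic identity
$$
\cn_{x,y}\bigl(\la\nabla_{x,y}u\ra^2 X-2(X\cdot\nabla_{x,y}u)\nabla_{x,y}u\bigr)=(d-1)\la\nabla_{x,y}u\ra^2-2(X\cdot\nabla_{x,y}u)\Delta_{x,y}u,
$$
valid for any $C^2$ function $u$ on an open set of $\xR^{d+1}$ and the radial vector field $X=(x,y)$; this is a direct computation using $\cn_{x,y}X=d+1$ and $\partial_iX_j=\delta_{ij}$. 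Applying it to $u=\phi$, using $\Delta_{x,y}\phi=0$, and integrating over $\Omega$ with the divergence theorem on the Lipschitz domain $\Omega$ (legitimate because $\nabla_{x,y}\phi\in C^1(\overline{\Omega})$ by Proposition~\ref{P6B}) yields
$$
\int_{\partial\Omega}\bigl[\la\nabla_{x,y}\phi\ra^2 X\cdot n-2(X\cdot\nabla_{x,y}\phi)(\nabla_{x,y}\phi\cdot n)\bigr]\,dS=(d-1)\iint_\Omega\la\nabla_{x,y}\phi\ra^2\,dxdy.
$$

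Next I split $\partial\Omega=R\cup\Sigma$ with $\Sigma=\{y=\eta(x):x\in Q\}$. On $R$, the Neumann condition $\partial_n\phi=0$ from Proposition~\ref{P6B} kills the second integrand and leaves only $\int_R\la\nabla_{x,y}\phi\ra^2 X\cdot n\,dS$. On $\Sigma$, with $n=(1+\la\nabla\eta\ra^2)^{-1/2}(-\nabla\eta,1)$ and $dS=\sqrt{1+\la\nabla\eta\ra^2}\,dx$, the relations
$$
X\cdot n\,dS=(\eta-\poh\eta)\,dx,\qquad (\nabla_{x,y}\phi\cdot n)\,dS=(B-\nabla\eta\cdot V)\,dx=G(\eta)\psi\,dx,
$$
$$
\la\nabla_{x,y}\phi\ra^2\big\arrowvert_{y=\eta}=V^2+B^2,\qquad (X\cdot\nabla_{x,y}\phi)\big\arrowvert_{y=\eta}=x\cdot V+\eta B
$$
are immediate, so the $\Sigma$-contribution becomes $\int_Q(\eta-\poh\eta)(V^2+B^2)\,dx-2\int_Q(x\cdot V+\eta B)G(\eta)\psi\,dx$.

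Finally I convert the $x\cdot V$ term into the $\poh\psi$ term appearing in the statement. Differentiating $\psi(x)=\phi(x,\eta(x))$ gives $\nabla\psi=V+B\nabla\eta$, hence $\poh\psi=x\cdot V+B\,\poh\eta$, so
$$
\int_Q(x\cdot V)G(\eta)\psi\,dx=\int_Q(\poh\psi)G(\eta)\psi\,dx-\int_Q B(\poh\eta)G(\eta)\psi\,dx.
$$
Substituting this into the surface expression above, collecting the common factor $(\eta-\poh\eta)$ in front of $V^2+B^2-2BG(\eta)\psi$, and dividing by two yields precisely \e{n21}.

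No real obstacle arises. The only delicate point is the application of the divergence theorem at the corners where $\Sigma$ meets the vertical walls, but this is painless because $\Omega$ is Lipschitz and $\nabla_{x,y}\phi$ is $C^1$ up to the closure; moreover the orthogonality condition \e{n9a} makes the free surface meet the lateral boundary tangentially, which is exactly the geometric input needed so that the boundary integrals split cleanly as $R\cup\Sigma$ without spurious corner contributions.
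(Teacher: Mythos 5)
Your proof is correct and takes essentially the same route as the paper: the pointwise Rellich identity $\cn_{x,y}\bigl(|\nabla_{x,y}\phi|^2 X - 2(X\cdot\nabla_{x,y}\phi)\nabla_{x,y}\phi\bigr) = (d-1)|\nabla_{x,y}\phi|^2$ that you start from is exactly what the paper derives by comparing the two computations of $\iint_\Omega\cn_{x,y}(\theta\nabla_{x,y}\phi)$ with $\theta=x\cdot\nabla_x\phi+y\partial_y\phi$, and the subsequent splitting of $\partial\Omega$ into $R$ and the free surface, together with the substitution $\poh\psi=x\cdot V+B\,\poh\eta$, matches the paper's treatment. One small terminology slip: the right-angle condition \e{n9a} means the free surface meets the vertical walls \emph{orthogonally}, not tangentially, but this does not affect your argument since the divergence theorem on the Lipschitz domain $\Omega$ already follows from $\nabla_{x,y}\phi\in C^1(\overline\Omega)$ as supplied by Proposition~\ref{P6B}.
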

\begin{rema}\label{R32}
$i)$ If $d=1$ then the second term in the right-hand side of \e{n21} vanishes and, since $n\cdot\nabla_{x,y}\phi=0$ on $R$, the first one simplifies to
$$
\mez\int_{R}\la \nabla_{x,y}\phi\ra^2 \begin{pmatrix}x \\ y\end{pmatrix}\cdot n\, dS
=\frac{L_1}{2} \int_{-h}^{\eta(L_1)} (\py\phi (L_1,y))^2\, dy +\frac{h}{2} \int_0^{L_1} (\px\phi(x,-h))^2\, dx.
$$
Consider now the case $d=2$. Then one has also $(x,y)\cdot n\ge 0$. Indeed,
\begin{alignat*}{5}
&\text{on } \{x_1=L_1\} \quad&&\text{one has } && n=(1,0,0) \quad &&\text{and }&&(x,y)\cdot n = L_1,\\
&\text{on } \{x_2=L_2\} \quad&&\text{one has }  && n=(0,1,0) \quad &&\text{and } &&(x,y)\cdot n = L_2,\\
&\text{on } \{y=-h\} \quad&&\text{one has }  && n=(0,0,-1) \quad &&\text{and }&&(x,y)\cdot n = h,
\end{alignat*}
and moreover, $(x,y)\cdot n\equiv 0$ on the two other faces $\{x_1=0\}$ and $\{x_2=0\}$. 

$ii)$ Another Pohozaev identity for the fractional Laplacian has been proved by Ros-Oton and Serra \cite{Ros-Oton-Serra}. Moreover, Biccari \cite{Biccari} deduced from the Pohozaev identity in \cite{Ros-Oton-Serra} a Pohozaev identity for solutions to fractional Schr\"odinger equations. 
Namely, it is proved in \cite{Ros-Oton-Serra} 
that, if $Q$ is any $C^{1,1}$ domain of $\xR^d$, $s\in (0,1)$ and $u\in H^s(\xR^d)$ vanishes 
in $\xR^d\setminus Q$, then 
\be\label{nROS}
\begin{aligned}
\int_Q  (\poh u)(-\Delta)^su\, dx &=\frac{2s-d}{2}\int_Q u(-\Delta)^s u\, dx\\
&\quad-\frac{\Gamma(1+s)^2}{2}
\int_{\partial Q} \left( \frac{u}{\dist(x,\partial Q)^s}\right)^2 (x\cdot \nu)\, dS.
\end{aligned}
\ee
To compare both results the important think to note is that, in the case without boundary, one has $G(0)=(-\Delta)^{1/2}$. 
However, \e{n21} and \e{nROS} involve functions satisfying different boundary conditions. 
Another essential difference for our purpose is that \e{n21} applies in the variable coefficients case 
where $\eta\neq 0$. Also the proofs of \e{n21} and \e{nROS} are different. 
The proof of \e{n21} given below is in fact guided by the study of the commutator $[x\px,G(\eta)]$ in 
Alazard-Delort \cite[Chapter $4$]{AlDe-Sob}. 
However, one cannot apply 
the results of \cite{AlDe-Sob} because of the boundary conditions on $R$ (and also because we consider the case 
$d\ge 1$ while the analysis in \cite{AlDe-Sob}Ê
is restricted to $d=1$). 
Compared to \cite[Chapter $4$]{AlDe-Sob}, the main new result here is the observation that the contribution of these boundary conditions is given by a positive term  
(namely the first term in the right-hand side of \e{n21}).
\end{rema}
\begin{proof}
The proof of this proposition relies on the divergence theorem applied to a well chosen vector field.
Introduce the scalar function 
$$
\theta\defn x\cdot\nabla_x\phi+y\py\phi
$$
and the vector field
$$
X=\theta\nabla_{x,y}\phi.
$$
We are going to compute the integral of $\cn_{x,y} X$ by two different ways. The wanted 
identity \e{n21}Ê
will be deduced by comparing the two results. 

Proposition~\ref{P6B} implies that 
$\nabla_{x,y}\phi\in C^1(\overline{\Omega})$ and 
$\partial_n\phi=0$ on $\partial\mathscr{R}\cap \partial\Omega$. 
This will allow us to justify all the computations done below. 

\noindent{\em First computation.} We want to exploit the fact that, 
since $\partial_n\phi=0$ on $R$, one has $X\cdot n=0$ on $R$. To do so we begin by writing
$$
\iint_\Omega \cn_{x,y} X\, dxdy=\int_{\partial\Omega} X\cdot n\, dS=\int_{\partial\Omega\setminus R}X\cdot n\, dS.
$$
Since $\partial\Omega\setminus R=\{(x,y),x\in Q,y=\eta(x)\}$, by definition of $G(\eta)\psi$, 
the previous identity simplifies to
\begin{align*}
\iint_\Omega \cn_{x,y} X\, dxdy&=\int_{\partial\Omega\setminus R} \theta \partial_n\phi\, dS\\
&=\int_Q \theta(x,\eta)\sqrt{1+|\nabla\eta|^2}\, \partial_n \phi \arrowvert_{y=\eta}\, dx\\
&=\int_Q \theta(x,\eta) G(\eta)\psi\, dx.
\end{align*}

Now, write
$$
\nabla_x\psi=\nabla_x(\phi(x,\eta(x)))=(\nabla_x\phi)(x,\eta(x))+(\py \phi)(x,\eta(x))\nabla_x\eta.
$$
Since $B=\partial_y\phi(x,\eta)$, we get 
that $(\nabla_x\phi)(x,\eta(x))=\nabla_x\psi-B\nabla_x\eta$. By definition of $\theta$, we deduce that
$$
\theta(x,\eta)=x\cdot(\nabla_x\psi-B\nabla_x\eta)+\eta B=x\cdot\nabla_x\psi+(\eta-x\cdot\nabla_x\eta)B.
$$
We thus end up with
\be\label{n271}
\iint_\Omega \cn_{x,y} X\, dxdy=\int_Q (G(\eta)\psi)(\poh\psi)\, dx+\int_Q(\eta-x\cdot\nabla_x\eta)B G(\eta)\psi\, dx.
\ee

\noindent{\em Second computation.} Set
$$
\mathcal{W}=\la \nabla_{x,y}\phi\ra^2.
$$
As can be verified by a direct computation, one has
$$
\cn_{x,y} X=\mathcal{W}+\mez x\cdot\nabla_x \mathcal{W}+\mez y\py\mathcal{W},
$$
and hence
$$
\cn_{x,y} X=\cnx\left(\frac{\mathcal{W}}{2}x\right)+\py\left(\frac{\mathcal{W}}{2}y\right)-\frac{d-1}{2}\mathcal{W}.
$$
Introduce the vector field
$$
Y=\frac{\mathcal{W}}{2}\begin{pmatrix}x \\ y\end{pmatrix}.
$$
Then the previous identity reads $\cn_{x,y} X=\cn_{x,y} Y-\frac{d-1}{2}\mathcal{W}$. Consequently
$$
\iint_\Omega \cn_{x,y} X\, dxdy=\int_{\partial\Omega} Y\cdot n\, dS-\frac{d-1}{2}\iint_\Omega\mathcal{W}\, dxdy.
$$
Now observe that,
\begin{align*}
&\int_{R}Y\cdot n\, dS=\mez\int_{R}\la \nabla_{x,y}\phi\ra^2 \begin{pmatrix}x \\ y\end{pmatrix}\cdot n\, dS,\\
&\int_{\partial\Omega\setminus R}Y\cdot n\, dS
=\mez\int_{Q} \mathcal{W}\arrowvert_{y=\eta} \begin{pmatrix}x \\ \eta\end{pmatrix}\cdot\begin{pmatrix}-\nabla\eta\\1\end{pmatrix}
\, dx=\mez\int_Q (\eta-\poh\eta)\mathcal{W}\arrowvert_{y=\eta}\, dx.
\end{align*}
Therefore
\begin{align*}
\iint_\Omega \cn_{x,y} X\, dxdy&=
\mez\int_{R}\la \nabla_{x,y}\phi\ra^2 \begin{pmatrix}x \\ y\end{pmatrix}\cdot n\, dS\\
&\quad+\mez\int_Q (\eta-\poh\eta)\mathcal{W}\arrowvert_{y=\eta}\, dx
-\frac{d-1}{2}\iint_\Omega\mathcal{W}\, dxdy.
\end{align*}
By combining this identity with \e{n271} we obtain the wanted result~\e{n21}.
\end{proof}

\section{The main identity}\label{S:4}

\begin{theo}\label{main}
Consider a solution
$(\eta,\psi)\in C^{0}([0,T];H^{s}_{\pair}(Q)\times H^{s}_{\pair}(Q))$ 
of the Craig--Sulem--Zakharov system \e{system} with $T>0$, 
$s>d/2+2$, $d\in\{1,2\}$ (see Definition~$\ref{D:10}$). Set
$$
\Theta=-\eta\partial_t\psi-\frac{g}{2}\eta^2,
$$
and 
$$
\mathscr{B}(T)=L_1L_2 \int_0^T
\left(\frac{1}{L_2}\int_0^{L_2} \Theta(t,L_1,x_2)\, dx_2+\frac{1}{L_1}\int_0^{L_1} \Theta(t,x_1,L_2)\, dx_1\right)\, dt.
$$

Then the following identity holds
\be\label{n301}
\mathscr{B}(T)=\frac{T}{2}\mathcal{H}
+P+I_1+I_2+I_3,
\ee
where $\mathcal{H}$ is the energy
$$
\mathcal{H}=\frac{1}{2} \int_Q\big[\psi G(\eta)\psi+g\eta^2\big]\, dx,
$$
$P$ is a positive integral (see Remark~$\ref{R32}$) given by
$$
P\defn\mez\int_0^T\int_{\partial\mathscr{R}\cap\partial\Omega}\la \nabla_{x,y}\phi\ra^2 \begin{pmatrix}x \\ y\end{pmatrix}\cdot n\, dS\, dt
$$
and $I_\ell$ are integrals denoting remainder terms:
\begin{align*}
I_1&=\frac{5+2d}{8}\iint_{Q\times [0,T]} \eta|\nabla_x\phi|^2(t,x,-h)\, dxdt,\\[1ex]
I_2&=-\frac{5+2d}{4}\int_0^T\iint_{\Omega(t)} (\py\phi)(\nabla_x\eta\cdot\nabla_x\phi) \, dydxdt,\\[1ex]
I_3&=-\left(\frac{d}{2}-\frac{1}{4}\right)\int_Q \eta \psi\,dx \Big\arrowvert_{t=0}^{t=T}-\int_Q \eta(\poh\psi)\, dx\Big\arrowvert_{t=0}^{t=T}.
\end{align*}
\end{theo}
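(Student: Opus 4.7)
The plan is to compute the integral $\int_0^T\!\!\int_Q (\partial_t\eta)(x\cdot\nabla\psi)\,dxdt$ in two independent ways and equate the two results. The first evaluation is purely spatial and uses the Pohozaev identity of Proposition~\ref{Pohozaev}; the second integrates by parts in $t$, then in $x$, and then invokes the Bernoulli equation to substitute for $\partial_t\psi$. Subtracting, the boundary data in $\Theta$ emerge and identity~\eqref{n301} appears.

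\emph{Spatial evaluation.} Since $\partial_t\eta = G(\eta)\psi$, Proposition~\ref{Pohozaev} gives, at each fixed~$t$,
$$
\int_Q (\partial_t\eta)(x\cdot\nabla\psi)\,dx = \mez\int_R \la\nabla_{x,y}\phi\ra^2 \begin{pmatrix}x \\ y\end{pmatrix}\cdot n\,dS - \frac{d-1}{2}\iint_{\Omega(t)}\la\nabla_{x,y}\phi\ra^2\,dxdy + \mez\int_Q (\eta - x\cdot\nabla\eta)\bigl(V^2 + B^2 - 2B\,\partial_t\eta\bigr)\,dx.
$$
Integrating in $t$, the boundary integral on $R$ becomes exactly $P$. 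By Green's identity $\iint_{\Omega(t)}\la\nabla_{x,y}\phi\ra^2\,dxdy = \int_Q \psi G(\eta)\psi\,dx = 2\mathcal{H} - g\int_Q \eta^2\,dx$ and the conservation of $\mathcal{H}$, the second term produces $-(d-1)T\mathcal{H}$ plus a quadratic-in-$\eta$ correction.

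\emph{Temporal evaluation.} Integration by parts in $t$, followed by integration by parts in $x$ (justified by $\partial_\nu\eta = 0$ on $\partial Q$, see~\eqref{n9a}), gives
$$
\int_0^T\!\!\int_Q (\partial_t\eta)(x\cdot\nabla\psi)\,dxdt = \Bigl[\int_Q \eta(x\cdot\nabla\psi)\,dx\Bigr]_0^T + \int_0^T\!\!\int_Q (d\eta + x\cdot\nabla\eta)\,\partial_t\psi\,dxdt - \int_0^T\!\!\int_{\partial Q}\eta(x\cdot\nu)\,\partial_t\psi\,d\sigma\,dt.
$$
Because $x\cdot\nu$ vanishes on $\{x_k = 0\}$ and equals $L_k$ on $\{x_k = L_k\}$, the boundary contribution is precisely $\mathscr{B}(T) + (g/2)\int_0^T\!\!\int_{\partial Q}\eta^2(x\cdot\nu)\,d\sigma\,dt$ by the very definition of $\Theta$. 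Into the bulk term I substitute the Bernoulli identity $\partial_t\psi = -g\eta - \mez V^2 + \mez B^2 - B\,V\cdot\nabla\eta$, derived from the second line of \eqref{system} via the relation $B(1+|\nabla\eta|^2) = G(\eta)\psi + \nabla\eta\cdot\nabla\psi$. The $-g\eta$ part produces $g\eta(d\eta + x\cdot\nabla\eta)$, whose volume integral is $(dg/2)\int\eta^2 + (g/2)\int_{\partial Q}\eta^2(x\cdot\nu)$; the boundary piece cancels against the analogous term above.

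\emph{The main obstacle.} The delicate step is the simplification of the resulting \emph{quadratic-in-velocity} remainder. The Pohozaev remainder $\mez\int_Q(\eta - x\cdot\nabla\eta)(V^2 + B^2 - 2B\,\partial_t\eta)\,dx$ must combine with the Bernoulli quadratic contribution $\int_Q(d\eta + x\cdot\nabla\eta)(-\mez V^2 + \mez B^2 - B\,V\cdot\nabla\eta)\,dx$ to produce the compact $I_1 + I_2$, which involve only the bottom trace $\la\nabla_x\phi(t,\cdot,-h)\ra^2$ and the interior integrand $(\partial_y\phi)(\nabla_x\eta\cdot\nabla_x\phi)$. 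The essential tool, inspired by the conservation-law analysis of Benjamin and Olver~\cite{BO}, is the harmonic identity
$$
\partial_y\bigl(\la\nabla_x\phi\ra^2 - (\partial_y\phi)^2\bigr) = 2\,\cnx\bigl(\partial_y\phi\,\nabla_x\phi\bigr),
$$
a direct consequence of $\Delta_{x,y}\phi = 0$. Integrating this over $\Omega(t)$, together with the Neumann condition on the vertical walls and $\partial_y\phi\arrowvert_{y=-h} = 0$, converts the surface quantities $V^2$, $B^2$, $BV\cdot\nabla\eta$ at $y = \eta$ into an interior integral over $\Omega(t)$ plus a bottom-boundary term. The numerical factor $5+2d$ is then forced by requiring that all surface contributions in $V, B$ cancel exactly. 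Finally, the leftover multiple of $\int_0^T\!\!\int\eta^2\,dt$ is rewritten via the identity $(d/dt)\int_Q\eta\psi\,dx = \int\psi G(\eta)\psi\,dx + \int\eta\,\partial_t\psi\,dx = 2\mathcal{H} - g\int\eta^2 + \int\eta\,\partial_t\psi\,dx$; after integration over $[0,T]$, this absorbs the excess $\int\eta^2$ into the leading $(T/2)\mathcal{H}$, into the time-boundary piece $-(d/2-1/4)\int_Q\eta\psi\,dx\arrowvert_0^T$ of $I_3$, and into boundary contributions in $\Theta$ already identified in the left-hand side.
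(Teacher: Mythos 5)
Your plan is essentially the paper's own proof of Theorem~\ref{main} recast as a ``compute one integral two ways'' argument (the very organizing device the paper uses in its appendix for the Hamiltonian proof, here applied to the Pohozaev route of Sections~\ref{S:Pohozaev}--\ref{S:4}). The spatial evaluation is the paper's Lemma following Corollary~\ref{C2d2} (Pohozaev applied to $G(\eta)\psi = \partial_t\eta$ together with the substitution \eqref{B2-1}); the temporal evaluation reproduces the paper's second lemma leading to \eqref{n35}; and your ``main obstacle'' paragraph is exactly the paper's third lemma, where the harmonic identity $\partial_y(|\nabla_x\phi|^2-(\partial_y\phi)^2)=2\,\cnx(\partial_y\phi\,\nabla_x\phi)$ underlies the choice $u=\tfrac{y}{2}(\py\phi)^2-\tfrac{y}{2}|\nabla\phi|^2$, $f=-y(\py\phi)\nabla\phi$ in \eqref{n41ab} and, with the second application around \eqref{n100}, produces $I_1,I_2$. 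One small thing worth flagging: the coefficient $\tfrac{5+2d}{8}$ is not forced solely by ``all surface contributions in $V,B$ cancelling'' --- it also depends on the free splitting parameter (the paper's $\alpha = d/2-1/4$, which is what determines the $-(d/2-1/4)\int\eta\psi\,dx|_0^T$ piece of $I_3$), so the final bookkeeping step in your last paragraph conceals a genuine choice that must be made; the underlying energy-conservation mechanism you invoke, however, is the right one.
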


Before proving this result, let us deduce the following corollary. 

\begin{coro}\label{C2d2}
Consider a solution
$(\eta,\psi)\in C^{0}([0,T];H^{s}_{\pair}(Q)\times H^{s}_{\pair}(Q))$ 
of the Craig--Sulem--Zakharov system \e{system} for some $T>0$ and 
$s>d/2+2$ with $d\in\{1,2\}$. Assume that
\be\label{n9c-bis}
\eta\ge -\frac{4h}{9},\quad \int_Q\eta \, dx=0,
\ee
and that there exists two positive constants $A,B$ such that
\be\label{n1025}
B<\frac{2}{5+2d},\quad T\ge \frac{4}{2-(5+2d)B}\left[1+\frac{(2d+3)\max\{L_1,L_2\}}{\sqrt{g}}A\right],
\ee
(where $\max\{L_1,L_2\}=L_1$ if $d=1$) and
\be\label{n1055}
\sup_{t\in [0,T]}\lA \nabla\eta(t)\rA_{L^\infty}\le B,\quad 
\sup_{t\in[0,T]}\lA \nabla\psi(t)\rA_{L^2}\le A\sqrt{2\mathcal{H}}.
\ee
Then
$$
\mathscr{B}(T) \ge \mathcal{H}.
$$
\end{coro}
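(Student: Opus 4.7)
The plan is to apply Theorem \ref{main} to write
\[
\mathscr{B}(T)=\frac{T}{2}\mathcal{H}+P+I_1+I_2+I_3,
\]
and then to check (i) $P+I_1\geq 0$ and (ii) $\la I_2\ra+\la I_3\ra\leq\bigl(\tfrac{5+2d}{4}BT+\tfrac{(2d+3)\max\{L_1,L_2\}}{\sqrt{g}}A\bigr)\mathcal{H}$. Assumption \eqref{n1025} is then exactly what is needed so that the surviving $\tfrac{T}{2}\mathcal{H}$ dominates $\mathcal{H}$ plus these error terms. The argument parallels the proof of Corollary \ref{C2}; the only genuinely new input, relevant when $d=2$, is the sharper depth assumption $\eta\geq -\tfrac{4h}{9}$ in place of $\eta\geq -\tfrac{h}{2}$.

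The estimate on $I_2$ is direct: Cauchy--Schwarz together with $\la\nabla\eta\ra\leq B$ and the kinetic-energy bound $\iint_{\Omega(t)}\la\nabla_{x,y}\phi\ra^2\,dxdy\leq 2\mathcal{H}$ yields $\la I_2\ra\leq\tfrac{5+2d}{4}BT\mathcal{H}$. For $I_3$, the zero-mean condition $\int_Q\eta\,dx=0$ lets me subtract the mean of $\psi$ and apply the Poincar\'e inequality, producing $\la\int_Q\eta\psi\,dx\ra\lesssim\max\{L_1,L_2\}\lA\eta\rA_{L^2}\lA\nabla\psi\rA_{L^2}$; the term $\int_Q\eta(\poh\psi)\,dx$ is handled similarly using the componentwise bound $\la x_i\ra\leq L_i\leq\max\{L_1,L_2\}$. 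Combining with $\lA\eta\rA_{L^2}\leq\sqrt{2\mathcal{H}/g}$, the second assumption in \eqref{n1055}, and summing the contributions at $t=0$ and $t=T$ produces the asserted bound with the precise constant $(2d+3)$.

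The central observation, and the only point where the hypothesis $\eta\geq -\tfrac{4h}{9}$ is used in a sharp way, is the non-negativity of $P+I_1$. By Remark~\ref{R32}, the boundary integral defining $P$ splits over the faces of $\partial\mathscr{R}\cap\partial\Omega$: the faces $\{x_1=0\}$ and $\{x_2=0\}$ contribute zero (since $(x,y)\cdot n$ vanishes there), the faces $\{x_1=L_1\}$ and $\{x_2=L_2\}$ contribute non-negatively, and the bottom $\{y=-h\}$, on which $\partial_y\phi=0$, contributes $\tfrac{h}{2}\iint\la\nabla_x\phi(t,x,-h)\ra^2\,dxdt$. Since $I_1$ is exactly $\tfrac{5+2d}{8}\iint\eta\,\la\nabla_x\phi(t,x,-h)\ra^2\,dxdt$, their sum has integrand $\bigl(\tfrac{h}{2}+\tfrac{5+2d}{8}\eta\bigr)\la\nabla_x\phi\ra^2$, which under $\eta\geq -\tfrac{4h}{9}$ is bounded below by $\tfrac{(2-d)h}{9}\la\nabla_x\phi\ra^2\geq 0$. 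Note that for $d=2$ this lower bound is exactly zero, which is why the depth threshold $4h/9$ cannot be relaxed.

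Putting everything together yields
\[
\mathscr{B}(T)\geq\Bigl(\frac{T(2-(5+2d)B)}{4}-\frac{(2d+3)\max\{L_1,L_2\}}{\sqrt{g}}A\Bigr)\mathcal{H},
\]
and \eqref{n1025} is precisely the inequality forcing the parenthesized factor to be $\geq 1$. The one nontrivial design choice is identifying the correct depth threshold $4h/9$ that makes $P+I_1$ non-negative in the three-dimensional case; after that, the remaining bounds on $I_2$ and $I_3$ amount to standard Cauchy--Schwarz/Poincar\'e bookkeeping.
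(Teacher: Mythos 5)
Your proof is correct and follows the paper's own argument precisely: decompose $\mathscr{B}(T)$ via Theorem~\ref{main}, drop $P+I_1\ge 0$ using the depth threshold $\eta\ge -4h/9$ combined with the bottom-face contribution $\tfrac{h}{2}\iint|\nabla_x\phi(t,x,-h)|^2\,dxdt$ from $P$, then bound $I_2$ by the kinetic energy and $I_3$ by Cauchy--Schwarz/Poincar\'e, recovering the constant $(2d+3)$ and hence the hypothesis on $T$. The added observation that the integrand of $P+I_1$ is bounded below by $\tfrac{(2-d)h}{9}|\nabla_x\phi|^2$, making $4h/9$ the exact threshold in dimension $d=2$, is accurate and a useful remark the paper does not spell out.
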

\begin{proof}The proof is similar to the proof of Corollary \ref{C2}. 
Firstly, notice that
$$
P\ge \frac{h}{2}\iint_{Q\times [0,T]} |\nabla_x\phi|^2(t,x,-h)\, dxdt.
$$
Therefore $P+I_1\ge 0$ provided that $h/2+((5+2d)/8)\eta\ge 0$, which holds true 
if $\eta\ge -4h/9$ and $d=1,2$. 

On the other hand, 
$$
\la I_2\ra\le \frac{5+2d}{4}\sup \lA \nabla_x\eta(t,\cdot)\ra_{L^\infty} 
\int_0^T\iint_{\Omega(t)} \la \py\phi \ra \la \nabla_x\phi\ra \, dydxdt
\le \frac{5+2d}{4} B T \mathcal{H}.
$$
Using the Cauchy-Schwarz and Poincar\'e inequalities, one has
\begin{align*}
\la\int_Q \eta \psi\,dx\ra &\le \max\{L_1,L_2\}\lA \eta\ra_{L^2} \lA \nabla_x\psi\rA_{L^2},\\
\la\int_Q \eta \poh\psi\,dx\ra &\le \max\{L_1,L_2\}\lA \eta\ra_{L^2} \lA \nabla_x\psi\rA_{L^2},
\end{align*}
Since $\lA \eta\rA_{L^2}\le \sqrt{2\mathcal{H}/g}$, we deduce from the assumption~\ref{n1055} 
that
$$
\la I_3\ra\le \left(d+\tdm \right)\max\{L_1,L_2\}\frac{2}{\sqrt{g}}A \mathcal{H}.
$$
Therefore  it follows from \e{n301} that
\be\label{n501}
\mathscr{B}(T)\ge \left(\frac{T}{2}-\frac{5+2d}{4} B T -\left(d+\tdm \right)\max\{L_1,L_2\}\frac{2}{\sqrt{g}}A\right)\mathcal{H}
,
\ee
and hence $\mathscr{B}(T)\ge \mathcal{H}$ by assumption on $T$.
\end{proof}
\begin{nota*}
We write simply
$$
\int \, dx ,\quad \int \, dy ,\quad  
\int\,dt
$$
as shorthand notations for, respectively,
$$
\int_Q \, dx ,\quad \int_{-h}^{\eta(t,x)} \, dy ,\quad 
\int_0^T \,dt.
$$
\end{nota*}

The proof of Theorem~\ref{main} will be made in three steps. 
First, we exploit the Pohozaev identity obtained in the previous section.

\begin{lemm}
There holds
\be\label{n30}
\begin{aligned}
\iint(\partial_t \eta)(\poh\psi) \, dxdt&=P-\frac{d-1}{2}\iiint \la \nabla_{x,y}\phi\ra^2\, dydxdt\\
&\quad+\iint (\poh \eta-\eta)(\partial_t\psi+g\eta)\, dxdt.
\end{aligned}
\ee
\end{lemm}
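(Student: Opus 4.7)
The plan is to reduce the identity to the Pohozaev identity of Proposition~\ref{Pohozaev} applied at each time $t$, followed by a purely algebraic manipulation that uses the second Craig--Sulem--Zakharov equation to transform the boundary correction into the stated form.

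First I would rewrite the left-hand side using the first equation of \e{system}, namely $\partial_t \eta = G(\eta)\psi$, so that
\[
\iint (\partial_t \eta)(\poh \psi)\, dxdt = \int_0^T \!\!\int_Q (G(\eta)\psi)(\poh \psi)\, dxdt.
\]
Next I would apply Proposition~\ref{Pohozaev} inside the time integral. By the definition of $P$ in the statement of Theorem~\ref{main}, this yields
\[
\int_0^T\!\!\int_Q (G(\eta)\psi)(\poh \psi)\, dxdt = P - \frac{d-1}{2}\iiint |\nabla_{x,y}\phi|^2\, dydxdt + \frac{1}{2}\iint (\eta - \poh\eta)\bigl[V^2 + B^2 - 2BG(\eta)\psi\bigr]\, dxdt.
\]
Comparing with the desired conclusion \e{n30}, the problem reduces to the pointwise identity
\[
-\tfrac{1}{2}\bigl[V^2 + B^2 - 2BG(\eta)\psi\bigr] = \partial_t\psi + g\eta.
\]

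The main (though routine) calculation is establishing this last identity. Using $V = \nabla\psi - B\nabla\eta$ and the definition $B = \bigl(G(\eta)\psi + \nabla\eta\cdot\nabla\psi\bigr)/(1+|\nabla\eta|^2)$, which gives $2BG(\eta)\psi = 2B^2(1+|\nabla\eta|^2) - 2B\nabla\eta\cdot\nabla\psi$, one expands
\[
V^2 + B^2 - 2BG(\eta)\psi = |\nabla\psi|^2 - B^2(1+|\nabla\eta|^2) = |\nabla\psi|^2 - \frac{\bigl(G(\eta)\psi + \nabla\eta\cdot\nabla\psi\bigr)^2}{1+|\nabla\eta|^2}.
\]
Multiplying by $-1/2$ and invoking the second equation of \e{system} identifies the right-hand side with $\partial_t\psi + g\eta$, which completes the proof.

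The potential obstacle here is really bookkeeping: one must verify that the restriction-to-$Q$ conventions and the extension to even periodic functions (Definition~\ref{De:tilde} and Proposition~\ref{P6B}) make every integration by parts in the Pohozaev identity applicable on $Q$ rather than on $\xT^d$, and that the boundary terms on the lateral walls of $\mathscr{R}$ coincide with the contribution absorbed in $P$. Since Proposition~\ref{P6B} guarantees $\nabla_{x,y}\phi \in C^1(\overline{\Omega})$ and $\partial_n\phi = 0$ on $\partial\mathscr{R}\cap\partial\Omega$, all terms are well defined and the identity holds without further regularization.
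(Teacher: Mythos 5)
Your argument is correct and follows the paper's proof in all essentials: substitute $\partial_t\eta=G(\eta)\psi$, invoke Proposition~\ref{Pohozaev}, and reduce to the pointwise identity $\tfrac12(V^2+B^2-2BG(\eta)\psi)=-\partial_t\psi-g\eta$, which you then verify algebraically from the definitions of $B$ and $V$ together with the second equation of \e{system}. The only cosmetic difference is that the paper records the intermediate identity \e{n25} in the form $\tfrac12|\nabla\psi|^2-\tfrac12\frac{(\nabla\eta\cdot\nabla\psi+G(\eta)\psi)^2}{1+|\nabla\eta|^2}=\tfrac12 V^2+BV\cdot\nabla\eta-\tfrac12 B^2$ (using $G(\eta)\psi=B-V\cdot\nabla\eta$), whereas you simplify directly to $|\nabla\psi|^2-B^2(1+|\nabla\eta|^2)$; these are the same computation arranged differently.
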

\begin{proof}
Since $\partial_t\eta=G(\eta)\psi$, it follows from Proposition~\ref{Pohozaev} that
\begin{align*}
\iint (\partial_t \eta)(\poh\psi)\, dxdt&=P-\frac{d-1}{2}\iiint \la \nabla_{x,y}\phi\ra^2\, dydxdt\\
&\quad+\mez\iint(\eta-\poh\eta)(V^2+B^2-2BG(\eta)\psi)\, dxdt.
\end{align*}
To prove \e{n30}, it is thus sufficient to prove that
\be\label{B2-1}
\mez (V^2+B^2-2BG(\eta)\psi)=-\partial_t\psi-g\eta.
\ee
To obtain this identity, we use the fact that 
\begin{equation*}
B=\frac{G(\eta)\psi+\nabla\eta\cdot\nabla\psi}{1+|\nabla\eta|^2},\quad
V=\nabla\psi-B\nabla\eta.
\end{equation*}
Consequently, $G(\eta)\psi=B-V\cdot\nabla\eta$ and 
\begin{equation}\label{n25}
\mez |\nabla \psi|^2 -\mez \frac{\left( \nabla\eta\cdot\nabla\psi 
+ G(\eta)\psi \right)^2}{1+|\nabla\eta|^2} 
= \mez V^2 +B V\cdot\nabla\eta- \mez B^2,
\end{equation}
so~\eqref{B2-1} follows from the second equation of \eqref{system}.
\end{proof}

Next we integrate by parts and use the equations for $(\eta,\psi)$ to simplify the expressions.

\begin{nota}
Let $f\colon Q\times[0,T]\rightarrow \xR$ and $d=1,2$. If $d=2$ then we set
$$
\Gamma=\big([0,L_1]\times \{L_2\}\big)\times (\{L_1\}\times [0,L_2]\big),
$$
and use the notation
$$
\iint_{\Gamma\times[0,T]} f\,dS dt
=L_1L_2\int_0^T\left[\frac{1}{L_1}\int_0^{L_1} f(t,x_1,L_2)\, dx_1+\frac{1}{L_2}\int_0^{L_2}f(t,L_1,x_2)\, dx_2\right]\, dt.
$$
Similarly, when $d=1$ and $f\colon [0,L_1]\times [0,T]\rightarrow 0$, we write
$$
\iint_{\Gamma\times[0,T]} f\,dSdt=L_1\int_0^T f(t,L_1)\, dt.
$$
With this notation, one has
$$
\mathscr{B}(T)=\iint_{\Gamma\times[0,T]} \Theta \,dSdt.
$$
\end{nota}

\begin{lemm}
There holds
\be\label{n35}
\begin{aligned}
\iint_{\Gamma\times[0,T]} \Theta \,dSdt&=P-\frac{d-1}{2}\iiint \la \nabla_{x,y}\phi\ra^2\, dydxdt\\
&\quad-(1+d)\iint \eta\partial_t\psi\, dxdt-\frac{2+d}{2} \iint g \eta^2\, dx dt
\\
&\quad -\int \eta(\poh\psi)\, dx\Big\arrowvert_{t=0}^{t=T}.
\end{aligned}
\ee
\end{lemm}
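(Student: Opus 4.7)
The plan is to start from identity \e{n30} and reshape both sides by integration by parts, using the time derivative to move $\partial_t$ off of $\eta$ and the divergence structure of $x\cdot \nabla$ to move $\poh$ off of $\psi$; the key observation is that the boundary contribution on $\partial Q$ produced when one integrates by parts $x\cdot\nabla$ is exactly $\mathscr{B}(T)$, because on $\partial Q$ one has $x\cdot\nu = L_1$ on $\{x_1=L_1\}$, $x\cdot\nu = L_2$ on $\{x_2=L_2\}$ and $x\cdot\nu = 0$ on the two other faces, so that for every function $f$ one has $\int_{\partial Q} f\,(x\cdot\nu)\,dS = \iint_{\Gamma} f\,dS$ with the notation introduced just before the lemma.

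\textbf{Step 1.} Integrate by parts in time the left-hand side of \e{n30} to get
$$\iint (\partial_t\eta)(\poh\psi)\,dxdt = \int \eta(\poh\psi)\,dx\Big|_{t=0}^{t=T} - \iint \eta\, \poh(\partial_t\psi)\,dxdt.$$
Then use $\cn_x(xf) = df + \poh f$ and the divergence theorem in $Q$ to write, for each $t$,
$$\int_Q \eta\, \poh(\partial_t\psi)\,dx = \int_{\partial Q} \eta(\partial_t\psi)(x\cdot\nu)\,dS - d\int_Q \eta(\partial_t\psi)\,dx - \int_Q (\poh\eta)(\partial_t\psi)\,dx,$$
and recognize the boundary integral as $\iint_{\Gamma} \eta(\partial_t\psi)\,dS$ after integration in $t$.

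\textbf{Step 2.} Perform the same divergence-theorem computation on the term $g\iint(\poh\eta)\eta\,dxdt$ appearing on the right-hand side of \e{n30}: writing $\eta\,\poh\eta = \tfrac12 \poh(\eta^2)$ gives
$$g\iint(\poh\eta)\eta\,dxdt = \frac{g}{2}\iint_{\Gamma\times[0,T]}\eta^2\,dSdt - \frac{dg}{2}\iint\eta^2\,dxdt.$$

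\textbf{Step 3.} Substitute both expressions back into \e{n30}. The terms $\iint(\poh\eta)(\partial_t\psi)\,dxdt$ on the two sides cancel, and collecting everything yields
\begin{align*}
-\iint_{\Gamma\times[0,T]}\eta(\partial_t\psi)\,dSdt - \frac{g}{2}\iint_{\Gamma\times[0,T]}\eta^2\,dSdt
&= P - \frac{d-1}{2}\iiint |\nabla_{x,y}\phi|^2\,dydxdt \\
&\quad - (1+d)\iint \eta\,\partial_t\psi\,dxdt - \frac{2+d}{2}g\iint \eta^2\,dxdt \\
&\quad - \int \eta(\poh\psi)\,dx\Big|_{t=0}^{t=T}.
\end{align*}
Finally, using the definition $\Theta = -\eta\partial_t\psi - \tfrac{g}{2}\eta^2$, the left-hand side is exactly $\iint_{\Gamma\times[0,T]}\Theta\,dSdt$, which delivers \e{n35}.

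There is no real conceptual obstacle; the only thing requiring care is the bookkeeping of signs and of the two types of boundary terms (the temporal ones at $t=0,T$ produced by integrating $\partial_t\eta$, and the spatial ones on $\partial Q$ produced by integrating $\poh$), and verifying that the coefficient of $g\iint\eta^2\,dxdt$ combines correctly: the term $-dg/2$ from Step 2 adds to the $-g$ coming from $\iint(\poh\eta-\eta)g\eta\,dxdt$ in \e{n30}, giving $-(2+d)g/2$ as claimed. Note that this identity does not yet require the zero-mean condition $\int \eta\,dx = 0$ from \e{n9b}, which will be used only subsequently in the argument that controls the temporal boundary term.
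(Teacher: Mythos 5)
Your proof is correct and takes essentially the same route as the paper: you integrate by parts on the left-hand side of \e{n30} (in time, then with the divergence theorem in $x$) so that the term $\iint(\poh\eta)(\partial_t\psi)\,dxdt$ cancels, while the paper integrates by parts on the right-hand side of \e{n30} so that $\iint(\partial_t\eta)(\poh\psi)\,dxdt$ cancels — the same manipulations applied on opposite sides of the identity. The sign and coefficient bookkeeping, including the identification $\int_{\partial Q} f\,(x\cdot\nu)\,dS = \iint_{\Gamma} f\,dS$, is accurate.
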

\begin{proof}
Notice that for any scalar function $f$, one has
$$
\iint \cn_x(f x)\, dx dt=\iint_{\Gamma\times[0,T]} f\,dSdt.
$$
Then, integrating by parts, we find that
\begin{align*}
\iint(\poh\eta-\eta)(\partial_t\psi+g\eta)\, dxdt&=\mez\iint_{\Gamma\times[0,T]}g\eta^2\,dSdt-\frac{2+d}{2}\iint g\eta^2\, dxdt\\
&\quad+\iint_{\Gamma\times[0,T]}\eta\partial_t\psi\, dSdt-(1+d)\iint \eta \partial_t\psi\, dxdt\\
&\quad+\iint (\partial_t \eta)(\poh\psi)\,dxdt-\int \eta(\poh\psi)\, dx\Big\arrowvert_{t=0}^{t=T}
.
\end{align*}
We obtain the wanted result by combining the previous identity with \e{n30}.
\end{proof}

\begin{rema}\label{R:46}
Consider the case when $d=1$ and set 
$m(t)=\eta(t,L_1)$. Then, 
Since $\px\eta(t,L_1)=0$ and $\px\psi(t,L_1)=0$, it follows from \e{system} that
$$
\partial_t\psi(t,L_1)=-g\eta(t,L_1)+\mez (G(\eta)\psi)(t,L_1)^2.
$$
Since $G(\eta)\psi=\partial_t\eta$, we conclude that
\be\label{n94}
\partial_t\psi(t,L_1)=-gm(t)+\mez m'(t)^2.
\ee
This shows that $\Theta=\mez\big[gm(t)^2-m(t)m'(t)^2\big]$.
\end{rema}
The next step consists in computing the right-hand side of \e{n35}. 
The wanted result \e{n301}Ê
will be a direct consequence of the previous results and the 
following lemma. 

\begin{lemm}
Set
$$
A\defn (1+d)\iint \eta \partial_t \psi \,dxdt+\frac{2+d}{2}\iint g\eta^2\,dxdt+\frac{d-1}{2}\iiint \la \nabla_{x,y}\phi\ra^2\, dydxdt.
$$
There holds
\be\label{n151}
\begin{aligned}
A&=-\frac{T}{4}\int \big[\psi G(\eta)\psi+g\eta^2\big]\, dx+\left(\frac{d}{2}-\frac{1}{4}\right)\int \eta \psi\,dx \Big\arrowvert_{t=0}^{t=T}\\
&\quad-\frac{5+2d}{8}\iint \eta|\nabla_x\phi|^2(t,x,-h)\, dxdt\\
&\quad+\frac{5+2d}{4}\iiint (\py\phi)(\nabla_x\eta\cdot\nabla_x\phi) \, dydxdt.
\end{aligned}
\ee
\end{lemm}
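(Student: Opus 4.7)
The strategy is to reduce \eqref{n151} to a single pointwise-in-$t$ identity, which is then proved by the divergence theorem. Green's identity in $\Omega(t)$, combined with $\partial_n\phi=0$ on $R$ and $\phi|_{y=\eta}=\psi$, gives $\iint|\nabla_{x,y}\phi|^2\,dy\,dx = \int\psi G(\eta)\psi\,dx = \int\psi\partial_t\eta\,dx$, where the last equality uses the first equation of \eqref{system}. Integrating in $t$ and then by parts in $t$ turns this into $\int\eta\psi\,dx|_{t=0}^{t=T} - \iint\eta\partial_t\psi\,dxdt$, and substitution into $A$ yields
$$A = \tfrac{d+3}{2}\iint\eta\,\partial_t\psi\,dxdt + \tfrac{d+2}{2}g\iint\eta^2\,dxdt + \tfrac{d-1}{2}\int\eta\psi\,dx\Big|_{t=0}^{t=T}.$$
The same time-by-parts manipulation applied to the conservation law $2T\mathcal{H}=\iint(\psi G(\eta)\psi+g\eta^2)\,dxdt$ gives $2T\mathcal{H}=\int\eta\psi\,dx|_{t=0}^{t=T}-\iint\eta\partial_t\psi\,dxdt+g\iint\eta^2\,dxdt$. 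Using these two relations to eliminate $\iint\eta\partial_t\psi\,dxdt$ and $\iint g\eta^2\,dxdt$ from \eqref{n151}, an elementary arithmetic check reduces the identity to the pointwise-in-$t$ statement
$$\int_Q\eta(\partial_t\psi+g\eta)\,dx = -\tfrac{1}{2}\int_Q\eta\,|\nabla_x\phi|^2\big|_{y=-h}\,dx + \iint_\Omega(\py\phi)(\nabla_x\eta\cdot\nabla_x\phi)\,dydx.$$

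This last identity is the heart of the argument, and where the main difficulty lies. Using \eqref{B2-1} together with $G(\eta)\psi=B-V\cdot\nabla\eta$, its left-hand side equals $-\tfrac{1}{2}\int_Q\eta(V^2-B^2+2BV\cdot\nabla\eta)\,dx$. The right-hand side is recognized as a divergence-theorem statement in $\Omega$ by introducing the vector field
$$\mathbf{F}(x,y)\defn\Bigl(\eta(x)(\py\phi)\nabla_x\phi,\;-\tfrac{1}{2}\eta(x)\bigl(|\nabla_x\phi|^2-(\py\phi)^2\bigr)\Bigr),$$
whose first $d$ components are horizontal and whose last component is vertical. A direct computation using $\Delta_{x,y}\phi=0$ collapses $\cn_{x,y}\mathbf{F}$ to the single interior term $(\py\phi)(\nabla_x\eta\cdot\nabla_x\phi)$: harmonicity is exactly what makes the $y$-component cancel the interior pieces generated by differentiating the horizontal components. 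On $\partial\Omega$ one then reads off three boundary contributions. The wall conditions \eqref{n6}--\eqref{n6b}, $\nabla_x\phi\cdot\nu=0$ on $\partial Q$, kill the lateral flux; the bottom condition \eqref{n7}, $\py\phi|_{y=-h}=0$, reduces the bottom flux to $\tfrac{1}{2}\int_Q\eta\,|\nabla_x\phi|^2|_{y=-h}\,dx$; on the free surface $y=\eta$, using the unnormalized outward conormal $(-\nabla_x\eta,1)$ with $B=\py\phi|_{y=\eta}$ and $V=\nabla_x\phi|_{y=\eta}$, the flux equals $-\int_Q[\eta BV\cdot\nabla_x\eta+\tfrac{1}{2}\eta(V^2-B^2)]\,dx$. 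Summing the three contributions and rearranging yields the pointwise identity.

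The remaining assembly is purely arithmetic: integrate the pointwise identity in $t$, multiply by $(5+2d)/4$, and match against the expression for $A$ after $2T\mathcal{H}$ has been eliminated as above. The coefficients of $\iint\eta\partial_t\psi\,dxdt$, $\iint g\eta^2\,dxdt$, and $\int\eta\psi\,dx|_{t=0}^{t=T}$ all balance, producing \eqref{n151}. The only nontrivial step is the choice of the vector field $\mathbf{F}$; once it is written down, the harmonicity of $\phi$ and the physical boundary conditions of the water-wave system do the rest, and no appeal is needed to identities beyond those already established in the paper (notably \eqref{B2-1}).
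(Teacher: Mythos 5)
Your proof is correct, and it takes a genuinely different (and in a sense cleaner) route than the paper's. The paper introduces a free parameter $\alpha$, splits $A = A_1+A_2$, integrates by parts in time, invokes the Benjamin--Olver identity \eqref{n41ab} twice --- once with $u=\tfrac{y}{2}\bigl((\py\phi)^2-|\nabla\phi|^2\bigr)$, $f=-y(\py\phi)\nabla\phi$ to convert the $A_2$-surface term into a bulk integral plus a bottom integral, and once more (via \eqref{n40}, leading to \eqref{n100}) with $u=-(y-\eta)(\py\phi)^2$ to rewrite that bulk integral --- and finally fixes $\alpha=d/2-1/4$ so the coefficients of $\iint\psi G(\eta)\psi$ and $\iint g\eta^2$ coincide. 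You instead use Green's identity and time-integration-by-parts to express everything in terms of $\iint\eta\partial_t\psi$, $\iint g\eta^2$, $\int\eta\psi\,dx\big|_0^T$ and $T\mathcal{H}$, and then reduce the whole lemma to the single pointwise identity
$\int_Q\eta(\partial_t\psi+g\eta)\,dx = -\tfrac12\int_Q\eta|\nabla_x\phi|^2\big|_{y=-h}\,dx+\iint_\Omega(\py\phi)(\nabla_x\eta\cdot\nabla_x\phi)\,dydx$,
proved in one stroke by the divergence theorem for your vector field $\mathbf{F}$. I checked that $\cn_{x,y}\mathbf{F}=(\py\phi)(\nabla_x\eta\cdot\nabla_x\phi)$, that the wall, bottom and free-surface fluxes are as you state, and that the final coefficient bookkeeping gives $(\tfrac{d}{2}-\tfrac14)$, $-\tfrac{T}{2}\mathcal{H}$ and $\tfrac{5+2d}{4}$ as required. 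Your pointwise identity is exactly what the paper obtains by composing its two Benjamin--Olver applications, so the two arguments encode the same cancellations; but choosing $\mathbf{F}$ with the factor $\eta(x)$ (rather than $y$ or $y-\eta$) is precisely what lets you collapse the two steps into one. The trade-off is that the paper's version, by exposing $\alpha$ and the two separate Benjamin--Olver identities, makes it transparent which conservation-law-type identities are being combined, which is reused in the paper's appendix; your version is shorter and removes the apparent arbitrariness of $\alpha$.
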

\begin{proof}
Let $\alpha$ be a parameter to be determined. We split $A$ as $A=A_1+A_2$ where
\begin{align*}
A_1&=\alpha\iint \eta \partial_t \psi\, dxdt+\frac{d-1}{2}\iiint \la \nabla_{x,y}\phi\ra^2\, dydxdt,\\ 
A_2&= \left(1+d-\alpha\right)\iint \eta \partial_t \psi\, dxdt+\frac{2+d}{2} \iint g\eta^2\,dxdt.
\end{align*}
Integrate by parts and use the equation $\partial_t\eta=G(\eta)\psi$ to obtain
$$
-\alpha \iint \eta \partial_t \psi\, dxdt=-\alpha\iint \psi G(\eta)\psi\, dxdt+\alpha \int \eta \psi\, dx\Big\arrowvert_0^T.
$$
and
$$
\frac{d-1}{2}\iiint \la \nabla_{x,y}\phi\ra^2\, dydxdt=\frac{d-1}{2}\iint \psi G(\eta)\psi\, dxdt,
$$
so that
\be\label{n47a}
A_1=\left(\frac{d-1}{2}-\alpha\right) \iint \psi G(\eta)\psi\, dxdt+\alpha \int \eta \psi\, dx\Big\arrowvert_0^T.
\ee
On the other hand, directly from the equation for $\psi$ (see \e{system}) and the definition of $B$ (see \e{n10b}), one has
\be\label{n48}
A_2=\left(\alpha-\frac{d}{2}\right)\iint g\eta^2+\left(1+d-\alpha\right) \iint \eta\left(-\mez|\nabla\psi|^2+\mez 
(1+|\nabla\eta|^2)B^2\right).
\ee
In addition, it is easily verified (see \e{n25} and \e{n10b}) that
\be\label{n48x1}
-\mez|\nabla\psi|^2+\mez(1+|\nabla\eta|^2)B^2=\mez \Big[(\py\phi)^2-|\nabla\phi|^2-2(\py\phi)(\nabla\phi)\cdot\nabla\eta\Big]\Big\arrowvert_{y=\eta}.
\ee

The next calculations rely in a crucial way on 
the analysis done by Benjamin and Olver (\cite{BO}) of the conservation laws for water waves. 
This means that, 
to compute $A_2$, we will use two elementary identities. Consider a scalar function $u=u(x,y)$ and 
a vector field $f=f(x,y)$ with values in $\xR^2$ (the time is seen as a parameter and we skip it). Firstly, one has
\be\label{n40}
\int_Q u(x,\eta(x))\, dx=\int_Q \int_{-h}^{\eta(x)}\partial_y u(x,y)\, dy 
+\int_Q u(x,-h)\, dx.
\ee
Similarly,
\be\label{n41}
\int_Q f(x,\eta)\cdot\nabla\eta\, dx+\int_Q \int_{-h}^{\eta}\cnx f\, dydx=
\int_{(\partial Q\times [-h,+\infty))\cap \partial\Omega} f\cdot \nu \, dS.
\ee
Then
\begin{multline}\label{n41ab}
\int u(x,\eta(x))\, dx+\int f(x,\eta)\cdot\nabla\eta\, dx
\\=\iint(\py u-\cnx f)\, dydx
+\int u(x,-h)\, dx
+\int_{(\partial Q\times [-h,+\infty))\cap \partial\Omega} f\cdot \nu \, dS.
\end{multline}
Recall that $\Delta_{x,y}\phi=0$ and that $\partial_n\phi$ vanishes on $\partial\mathscr{R}\cap\partial \Omega$ (see Proposition~\ref{P6B}). 
Therefore, by applying \e{n41ab}Ê
with
$$
u=\frac{y}{2}(\py\phi)^2-\frac{y}{2}|\nabla\phi|^2,\quad
f=-y(\py\phi)\nabla\phi, 
$$
we deduce from \e{n48x1} that
\begin{align*}
\int \eta\left(-\mez|\nabla\psi|^2+\mez 
(1+|\nabla\eta|^2)B^2\right)\, dx&=\mez\iint \big[(\py\phi)^2-|\nabla\phi|^2\big]\, dydx\\
&\quad+\frac{h}{2}\int_Q |\nabla\phi|^2(x,-h)\, dx.
\end{align*}
Then, it follows from \e{n48} and \e{n47a} that
\begin{align*}
A&=\left(\frac{d-1}{2}-\alpha\right) \iint \psi G(\eta)\psi\, dxdt+\left(\alpha-\frac{d}{2}\right)\iint g\eta^2\, dxdt\\
&\quad+\alpha \int \eta \psi\, dx\Big\arrowvert_0^T\\
&\quad+\frac{1+d-\alpha}{2} \iiint \big[(\py\phi)^2-|\nabla\phi|^2\big]\, dydxdt\\
&\quad+\frac{(1+d-\alpha)h}{2}\iint |\nabla\phi|^2(t,x,-h)\, dxdt.
\end{align*}
We now chose
$$
\alpha=\frac{d}{2}-\uq.
$$
Then the coefficients in front of $\iint \psi G(\eta)\psi\, dxdt$ and $\iint g\eta^2\, dxdt$ coincide. 
On the other hand, notice that, since the energy $\mathcal{H}$ is conserved 
($d\mathcal{H}/dt=0$), one has
$$
\iint \big[\psi G(\eta)\psi+g\eta^2\big]\, dx dt
=T\int \big[\psi G(\eta)\psi+g\eta^2\big]\, dx.
$$
We thus find that
\be\label{n49}
\begin{aligned}
A&=-\frac{T}{4}\iint \big[ \psi G(\eta)\psi +g\eta^2\big]\, dxdt\\
&\quad+\left(\frac{d}{2}-\uq\right)\int \eta \psi\, dx\Big\arrowvert_0^T\\
&\quad+\frac{5+2d}{8} \iiint \big[(\py\phi)^2-|\nabla\phi|^2\big]\, dydxdt\\
&\quad+\frac{(5+2d)h}{8}\iint |\nabla\phi|^2(t,x,-h)\, dxdt.
\end{aligned}
\ee

Consequently, to obtain the wanted result \e{n151}, we need only to transform further the sum of last two 
terms in the right-hand side of \e{n49}. To do so, we use again \e{n40} applied with (for some fixed $t$)
$$
u(x,y)=-(y-\eta(t,x))(\py\phi)(t,x,y)^2.
$$
Then $u(x,\eta(t,x))=0$ and $u(x,-h)=0$ and hence $\int_{-h}^{\eta(t,x)} \py u\, dy=0$. 
On the other hand
$$
\py u=-2(y-\eta)\phi_y \phi_{yy}-(\phi_y)^2,
$$
so integrating on $y\in [-h,\eta(x)]$ and then on $x\in [0,L_1]$ we obtain, 
remembering that $\phi_{yy}=-\Delta_x\phi$,
$$
0=\iint u_y=-\iint \phi_y^2 +2\iint(y-\eta)(\py\phi)(\Delta_x\phi).
$$
Since $\nabla\phi\cdot \nu=0$ on $(\partial Q\times [-h,+\infty))\cap \partial\Omega$, 
by integrating by parts we infer that
$$
0=-\iint \phi_y^2 -2\iint(y-\eta)(\py\nabla_x\phi)\cdot\nabla_x\phi+2\iint \phi_y \nabla_x\eta\cdot\nabla_x\phi,
$$
so
\begin{align*}
0&=-\iint \phi_y^2 -\iint \py\big( (y-\eta)|\nabla_x\phi|^2\big)+\iint |\nabla_x\phi|^2+2\iint \phi_y\nabla_x\eta\cdot\nabla_x\phi\\
&=\iint(|\nabla_x\phi|^2-\phi_y^2)-\int (h+\eta)|\nabla_x\phi|^2(x,-h)\, dx+2\iint \phi_y\nabla_x\eta\cdot\nabla_x\phi.
\end{align*}
This proves that
\begin{multline}\label{n100}
\iint(\phi_y^2-|\nabla_x\phi|^2)\, dydx+\int h|\nabla_x\phi|^2(x,-h)\, dx\\
=-\int \eta|\nabla_x\phi|^2(x,-h)\, dx+2\iint \phi_y\nabla_x\eta\cdot\nabla_x\phi \,dydx.
\end{multline}
Plugging this identity in \e{n49}, we complete the proof of the lemma.
\end{proof}
By combining \e{n35} and \e{n151}, we obtain the wanted result~\e{n301}. 
This completes the proof of Theorem~\ref{main}.

\section{Proof of Theorem~\ref{TI}}\label{S:5}

In this section we prove Theorem~\ref{TI}. 
Let $\chi \in C^\infty_0(\xT^d)$ be even in $x_1$ and in $x_2$. 
Consider an 
initial data $(\eta_0,\psi_0)$ such that
$$
\begin{aligned}
\eta_0&\defn\chi(x)\sum_{|n|+|m|\le N} a_{nm}^1\cos\left(\pi \frac{n x_1}{L_1}\right)
\cos\left(\pi\frac{m x_2}{L_2}\right),\\
\psi_0&\defn\chi(x)\sum_{|n|+|m|\le N} a_{nm}^2\cos\left(\pi \frac{n x_1}{L_1}\right)
\cos\left(\pi\frac{m x_2}{L_2}\right).
\end{aligned}
$$
We furthermore assume that the mean value of $\eta_0$ is $0$, which in turn implies that the 
mean value of $\eta(t,\cdot)$ is $0$ for all time. As explained below in Remark~\ref{R:final}, one can consider more general initial data. 

Since these initial data are $C^\infty$ and 
periodic functions defined for $x\in\xR^d$, we can directly solve the Cauchy problem in $\xR^d$ by means of 
Theorem~\ref{T2a}. Let $(\eta,\psi)\in C^\infty([0,T]\times \xT^d)$ be the unique solution of the system~\e{system} with initial data 
$(\eta_0,\psi_0)$ (the fact that $\eta,\psi$ are $C^\infty$ follows from the fact that on can propagate the regularity). 
As explained in Section~\ref{S:2}, since the initial data are even in $x_1,x_2$, one then obtains 
solutions of the equations in the tank by restricting $(\eta,\psi)$. 

In view of Corollary~\ref{C2d2}, to prove Theorem~\ref{TI} it remains only to prove the following result. 
\begin{lemm}\label{L51}
For any $\beta>1/2$, there exist $K_0,c,\kappa$ such that, for any $N\in \xN^*$, 
if
$$
\la a_{nm}^1\ra+\la a_{nm}^2\ra\le cN^{-\kappa},
$$
then the following properties hold with
$$
A=K_0N^\beta.
$$
Firstly, the solution exists on $[0,T(A)]$ with
$$
T(A)=4\left[1+\frac{(2d+3)\max\{L_1,L_2\}}{\sqrt{g}}A\right],
$$
and secondly one has $\eta(t,x)\ge -h/2$ for all $(t,x)\in [0,T]\times \xR^d$ and 
$$
\sup_{t\in [0,T(A)]}\lA \nabla\eta(t)\rA_{L^\infty}\le \frac{1}{5+2d},\quad 
\sup_{t\in[0,T(A)]}\lA \nabla\psi(t)\rA_{L^2}\le A\sqrt{2\mathcal{H}}.
$$
\end{lemm}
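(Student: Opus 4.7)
The plan is a bootstrap argument that combines three ingredients: conservation of $\mathcal H$ together with the coercivity $\mathcal H \gtrsim g\|\eta\|_{L^2}^2 + \|\psi\|_{\dot H^{1/2}}^2$ coming from the Dirichlet-to-Neumann operator; long-time existence in the high Sobolev space $H^s$ for small initial data, furnished by Proposition~\ref{T2} and a standard continuation argument for the Craig--Sulem--Zakharov system; and an interpolation inequality converting these two controls into the estimate on $\|\nabla\psi\|_{L^2}$ required by Corollary~\ref{C2d2}.

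I would first make the initial data quantitatively small in a scale of norms. Since the Fourier sum $\sum a_{nm}^i\cos\cos$ has $O(N^d)$ nonzero terms, each of amplitude $\le cN^{-\kappa}$ and frequency $\le CN$, Parseval together with the Bernstein inequality (and $\|\chi u\|_{H^s}\lesssim_\chi\|u\|_{H^s}$ in the cut-off case) gives, for every $s\ge 0$,
$$
\|(\eta_0,\psi_0)\|_{H^s}\le C_\chi c N^{s+d/2-\kappa},\qquad \|\psi_0\|_{\dot H^s}\le C_\chi N^{s-\mez}\|\psi_0\|_{\dot H^{1/2}}\le C_\chi' N^{s-\mez}\sqrt{\mathcal H}.
$$
Fix $s>d/2+2$. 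The well-posedness theory of Section~\ref{S:2} provides a lifespan $T_*\ge c_0/\|(\eta_0,\psi_0)\|_{H^s}\ge c_1 c^{-1}N^{\kappa-s-d/2}$ together with the propagation estimate $\|(\eta(t),\psi(t))\|_{H^s}\le 2\|(\eta_0,\psi_0)\|_{H^s}$ on $[0,T_*]$. Choosing $\kappa\ge s+d/2+\beta+1$ and $c$ small forces $T_*\ge T(A)=O(N^\beta)$, and the Sobolev embedding $H^s\hookrightarrow C^1$ then gives $\eta(t)\ge -h/2$ and $\|\nabla\eta(t)\|_{L^\infty}\le 1/(5+2d)$ on the whole interval once $c$ is taken small enough to absorb the embedding constants.

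The crucial step is the interpolation. Conservation of $\mathcal H$ gives $\|\psi(t)\|_{\dot H^{1/2}}\le C\sqrt{\mathcal H}$ for all $t$, while the bootstrap propagates the Bernstein bound $\|\psi(t)\|_{\dot H^s}\le 2C_\chi' N^{s-\mez}\sqrt{\mathcal H}$ throughout $[0,T_*]$. Interpolating $\dot H^1$ between $\dot H^{1/2}$ and $\dot H^s$ with exponents $(2s-2)/(2s-1)$ and $1/(2s-1)$ yields
$$
\|\nabla\psi(t)\|_{L^2}\le C_s\|\psi(t)\|_{\dot H^{1/2}}^{(2s-2)/(2s-1)}\|\psi(t)\|_{\dot H^s}^{1/(2s-1)}\le C_s' N^{(s-\mez)/(2s-1)}\sqrt{\mathcal H}=C_s' N^{1/2}\sqrt{\mathcal H}.
$$
The hypothesis $\beta>1/2$ supplies the margin $N^{\beta-1/2}\to\infty$: for $N\ge N_0(\beta)$ one has $C_s' N^{1/2}\le K_0 N^\beta/\sqrt 2$ and hence $\|\nabla\psi(t)\|_{L^2}\le A\sqrt{2\mathcal H}$. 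The finitely many small values $N\le N_0$ are absorbed by enlarging $K_0$ and shrinking $c$.

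The main obstacle is the second step: extracting from the results of Section~\ref{S:2} a fully quantitative lifespan bound of the form $T_*\gtrsim 1/\|(\eta_0,\psi_0)\|_{H^s}$ together with a sharp $H^s$ propagation estimate on that entire interval. This is a standard consequence of the paralinearization and energy estimates for the Craig--Sulem--Zakharov system recalled in Section~\ref{S:2}, but it is not recorded in the excerpt in exactly the form needed; some care is required because the equations are quasilinear and a naive Gronwall argument would burn too much of the lifespan. Once this quantitative well-posedness is available, the remaining choice of parameters $\kappa$, $K_0$, $c$ reduces to balancing two linear inequalities, and the threshold $\beta=1/2$ emerges naturally as the exponent $(s-\mez)/(2s-1)=\mez$ in the interpolation, exactly the Bernstein cost from $\dot H^{1/2}$ to $\dot H^1$ at frequency $N$, mirroring the dispersion relation $\omega(k)^2=g|k|$.
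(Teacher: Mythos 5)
Your overall architecture matches the paper's: convert the frequency-localized initial data into a quantitative smallness in a high Sobolev norm, invoke long-time well-posedness for small data, and close with an interpolation against the conserved $\dot H^{1/2}_x$ bound furnished by the Hamiltonian. You also correctly flag the lifespan bound as the step requiring extra work; the paper supplies precisely this by a scaling argument $\eta^\eps=\eta/\eps$, $\psi^\eps=\psi/\eps$ reducing to Lannes' Theorem~$4.16$ (Theorem~\ref{Teps} here), which gives lifespan $\gtrsim 1/\eps_0$ and the propagation estimate, so that gap is fillable.

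There is, however, a substantive error in the interpolation step. You claim that ``the bootstrap propagates the Bernstein bound $\|\psi(t)\|_{\dot H^s}\le 2C_\chi' N^{s-1/2}\sqrt{\mathcal H}$ throughout $[0,T_*]$.'' This cannot hold as stated: for $t>0$ the solution is no longer frequency-localized in $|n|\le N$ (the paper points this out explicitly), so Bernstein is unavailable, and the only propagation estimate at hand is the one from Theorem~\ref{Teps}, which measures the data in $H^{s+\mez}\times H^{s-\mez}$ and controls the solution only in $H^s\times H^{s-1}$ --- a half-derivative loss. What one actually gets is $\|\nabla\psi(t)\|_{H^{s-1}}\le C_1\eps_0\lesssim N^{s+\mez}\sqrt{\mathcal H}$, i.e.\ $N^{s+\mez}$ in place of your $N^{s-\mez}$. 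Feeding that into your interpolation (same exponents $\lambda=(s-1)/(s-\mez)$) yields
$$
\|\nabla\psi(t)\|_{L^2}\lesssim N^{\beta(s)}\sqrt{\mathcal H},\qquad \beta(s)=\frac{1}{2}\,\frac{s+\mez}{s-\mez},
$$
which is strictly greater than $\mez$ for every finite $s$, not the clean $N^{1/2}$ you obtained. The estimate $\|\nabla\psi(t)\|_{L^2}\le A\sqrt{2\mathcal H}$ with $A=K_0N^\beta$ therefore requires $\beta(s)\le\beta$, which imposes the extra constraint $s\ge(\beta+\mez)/(2\beta-1)$ missing from your argument (you only impose $s>d/2+2$). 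This is exactly why the paper fixes $s$ depending on $\beta$; without it, the proof does not close for $\beta$ close to $\mez$. The rest of your parameter bookkeeping (choice of $\kappa$, smallness of $c$, $L^\infty$ bounds via Sobolev embedding, $\eta\ge-h/2$) is in order once this is corrected.
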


To prove this lemma, we shall use 
an interpolation argument and 
the following result about the Cauchy problem on large time intervals.

\begin{theo}[from Lannes~\cite{LannesLivre}]\label{Teps}
Let $s_1>4$. 
For all integer $s\ge s_1$ there exist three positive constants $c_*,\delta_0,C_1$, depending only on $s$  such that, if
\be\label{n120}
\eps_0\defn \lA (\eta_0,\nabla\psi_0)\rA_{H^{s+\mez}(\xT^d)\times H^{s-\mez}(\xT^d)}
\le \delta_0,
\ee
then the solution $(\eta,\psi)$ given by Theorem \ref{T2a} exists on a time interval 
$[0,T]$ for some $T\ge c_*/\eps_0$ and
$$
\lA (\eta(t),\nabla\psi(t))\rA_{H^{s}(\xT^d)\times H^{s-1}(\xT^d)}\le 
C_1 \eps_0.
$$
\end{theo}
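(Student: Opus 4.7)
The strategy is to reduce Lemma~\ref{L51} to Theorem~\ref{Teps} applied to the even-periodic extension of $(\eta_0,\psi_0)$ on $\mathbb{T}^d$, combining Bernstein inequalities for the spectrally localized initial data with an interpolation between $\dot H^{1/2}$ and $\dot H^s$ to soften the $\sqrt{N}$ loss one would otherwise be stuck with. I would fix, once and for all, an integer $s > \max(s_1,4)$ satisfying
$$
\frac{s+1/2}{2s-1} < \beta;
$$
this is possible since the left-hand side tends to $1/2$ as $s\to\infty$ and $\beta>1/2$ by hypothesis. Only afterwards do I choose $\kappa$ (depending on $s,\beta,d$) and then $c$ (small enough).

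Since $\eta_0$ and $\psi_0$ are a fixed cutoff $\chi$ times a trigonometric polynomial with spectrum in $\{|n|+|m|\le N\}$, multiplication by $\chi$ is bounded on every Sobolev space, and Bernstein gives, for any $\sigma\ge 0$, $\|\eta_0\|_{H^\sigma(\mathbb{T}^d)}\lesssim_{\chi,\sigma} N^\sigma\|\eta_0\|_{L^2}$ and similarly for $\psi_0$. Combined with $|a_{nm}^j|\le cN^{-\kappa}$ and Plancherel, this yields the two key relations
$$
\sqrt{\mathcal{H}}\lesssim c\,N^{(d+1)/2-\kappa},\qquad
\epsilon_0\defn\|\eta_0\|_{H^{s+1/2}}+\|\nabla\psi_0\|_{H^{s-1/2}}\lesssim N^{s+1/2}\sqrt{\mathcal{H}}.
$$
Picking $\kappa\ge s+\beta+(d+3)/2$ and $c$ small enough ensures $\epsilon_0\le\delta_0$ and $c_*/\epsilon_0\ge T(A)$, so Theorem~\ref{Teps} furnishes a solution on $[0,T(A)]$ with $\|\eta(t)\|_{H^s}+\|\nabla\psi(t)\|_{H^{s-1}}\le C_1\epsilon_0$. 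For $c$ sufficiently small this quantity is uniformly small, and the Sobolev embedding $H^s\hookrightarrow C^1$ (valid since $s>d/2+1$) automatically delivers both $\eta(t,x)\ge -h/2$ and $\|\nabla\eta(t)\|_{L^\infty}\le 1/(5+2d)$.

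The delicate step is the bound $\|\nabla\psi(t)\|_{L^2}\le A\sqrt{2\mathcal{H}}$. The direct estimate $\|\nabla\psi(t)\|_{L^2}\le C_1\epsilon_0\lesssim N^{s+1/2}\sqrt{\mathcal{H}}$ loses a factor $N^{s+1/2}$, far too much compared to the target $K_0N^\beta$. Instead, I would interpolate between $\dot H^{1/2}$ and $\dot H^s$: with $\theta=1/(2s-1)$,
$$
\|\nabla\psi(t)\|_{L^2}=\|\psi(t)\|_{\dot H^1}\le \|\psi(t)\|_{\dot H^{1/2}}^{1-\theta}\|\psi(t)\|_{\dot H^s}^{\theta}.
$$
The first factor is bounded by $(2\mathcal{H})^{(1-\theta)/2}$ via the coercivity $\|\psi\|_{\dot H^{1/2}}^2\lesssim \int\psi\,G(\eta)\psi\,dx$ (which holds uniformly as long as $\|\nabla\eta\|_{L^\infty}$ stays small, already guaranteed above). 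The second factor is $\le(C_1\epsilon_0)^\theta\lesssim N^{(s+1/2)\theta}\mathcal{H}^{\theta/2}$. Multiplying yields
$$
\|\nabla\psi(t)\|_{L^2}\lesssim N^{(s+1/2)/(2s-1)}\sqrt{\mathcal{H}},
$$
which is $\le K_0N^\beta\sqrt{2\mathcal{H}}$ for $K_0$ sufficiently large, by the choice of $s$.

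The main obstacle I foresee is the logical ordering around the coercivity $\|\psi\|_{\dot H^{1/2}}^2\lesssim \int\psi\,G(\eta)\psi\,dx$: this requires smallness of $\|\nabla\eta\|_{L^\infty}$, which only holds after Teps has been invoked. The cleanest way around this is either a short continuity/bootstrap argument in $t$, or the observation that the coercivity constant is uniform on the invariant ball $\|\nabla\eta\|_{L^\infty}\le 1/(5+2d)$. The remaining technical points---the bounded-multiplier property of the fixed $\chi$ on $H^\sigma(\mathbb{T}^d)$, and the arithmetic matching of $\kappa$ and $s$ against $\beta$---are routine, and the condition $\beta>1/2$ enters precisely as the requirement that $(s+1/2)/(2s-1)=\beta$ have a finite solution in $s$.
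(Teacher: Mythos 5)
You have proved the wrong statement. The theorem to prove is Theorem~\ref{Teps} itself, namely the long-time well-posedness for small data: existence on a time interval of length at least $c_*/\eps_0$ together with the uniform bound $\lA(\eta(t),\nabla\psi(t))\rA_{H^{s}\times H^{s-1}}\le C_1\eps_0$. Your argument opens with \emph{``the strategy is to reduce Lemma~\ref{L51} to Theorem~\ref{Teps}''} and then proceeds to use Theorem~\ref{Teps} as a black box. In other words, you have reproduced (quite faithfully, in fact) the paper's proof of Lemma~\ref{L51} -- the spectral truncation, Bernstein, the interpolation between $\dot H^{1/2}$ and $\dot H^{s-1}$ with exponent $\theta=1/(2s-1)$, the choice $s\ge(\beta+1/2)/(2\beta-1)$, and the coercivity $\lA\psi\rA_{\dot H^{1/2}}^2\lesssim\int\psi\,G(\eta)\psi\,dx$ -- but the entire weight of the statement you were asked to prove is carried by the one sentence ``Theorem~\ref{Teps} furnishes a solution on $[0,T(A)]$.'' That is assuming the conclusion.

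The paper's actual proof of Theorem~\ref{Teps} is a normalization argument. One sets $\eps=M\eps_0$ for a large constant $M$ to be chosen, rescales $\eta^\eps=\eta/\eps$, $\psi^\eps=\psi/\eps$, and observes that $(\eta^\eps,\psi^\eps)$ solves the system \e{system-eps} with a small parameter $\eps$ in front of the nonlinear terms. One then invokes Theorem $4.16$ in Lannes' book: if the associated weighted energy $\mathcal{E}^\eps(0)$ is below a threshold $\delta_1$, the rescaled solution exists on a time interval of length $\ge C_*/\eps$ with $\mathcal{E}^\eps(t)\le 1$. The remaining work is to compare $\mathcal{E}^\eps$ with the norms $\lA(\eta^\eps,\nabla\psi^\eps)\rA_{H^{s+1/2}\times H^{s-1/2}}$ and $\lA(\eta^\eps,\nabla\psi^\eps)\rA_{H^{s}\times H^{s-1}}$ (two-sided bound~\e{n122c}, which itself requires the product estimate~\e{n127b} for $G$), fix $M=C_2/\delta_1$, and undo the scaling to read off $c_*,\delta_0,C_1$. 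None of this rescaling or energy-equivalence machinery appears in your proposal. To answer the question as posed, you would need to supply this reduction to Lannes' Theorem $4.16$ (or an independent long-time energy estimate), not the downstream interpolation argument.
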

\begin{proof}
Let us explain how to obtain this result from Theorem~$4.16$ in \cite{LannesLivre}. 

Set $\eps=M\eps_0$ where $M$ is some large enough real number to be determined later. 
Let $(\eta,\psi)$ be the solution given by Theorem \ref{T2a} and introduce
\be\label{n121}
\eta^\eps(t,x)=\frac{1}{\eps}\eta(t,x),\quad \psi^\eps(t,x)=\frac{1}{\eps}\psi(t,x).
\ee
Then
\begin{equation}\label{system-eps}
\left\{
\begin{aligned}
&\partial_t \eta^\eps=G(\eps\eta^\eps)\psi^\eps,\\
&\partial_t \psi^\eps + g \eta^\eps+ \frac{\eps}{2}|\nabla\psi^\eps|^2
-\frac{\eps}{2(1+|\eps\nabla\eta^\eps|^2)}
\bigl(G(\eps\eta^\eps)\psi^\eps+\eps\nabla\eta^\eps\cdot\nabla\psi^\eps\bigr)^2= 0.
\end{aligned}
\right.
\end{equation}
The proof is based on two facts. Firstly, introduce the energy
$$
\mathcal{E}^\eps(t)=\lA\nabla\psi^\eps(t)\rA_{H^{2}}^2
+\sum_{|\alpha|\le s}\Big(\lA \partial_x^\alpha\eta^\eps(t)\rA_{L^2}
+\lA \nabla\Big(\px^\alpha\psi^\eps-\eps B^\eps\px^\alpha\eta^\eps\Big)(t)\rA_{H^{-\mez}}^2\Big),
$$
where
$$
B^\eps=\frac{G(\eps\eta^\eps)\psi^\eps+\eps\nabla\eta^\eps\cdot\nabla\psi^\eps}{1+|\eps\nabla\eta^\eps|^2}.
$$
Then the first fact we need to know is a consequence of Theorem~$4.16$ in \cite{LannesLivre}. This result 
asserts that, if $s\ge s_1$ and if $\mathcal{E}^\eps(0)$ is small enough, say smaller than $\delta_1$, 
then there is a constant $C_*$ independent of $\eps$ and such that the 
solution exists on a time interval $[0,T^\eps]$ with $T^\eps\ge C_*/\eps$ 
and satisfies
$$
\sup_{t\in [0,T^\eps]} \mathcal{E}^\eps(t)\le 1. 
$$
To introduce the second fact we need to know, we begin by recalling the following technical ingredient: since $3>d/2+1$, 
one has (see \cite[Theorem~$3.15$]{LannesLivre})
\be\label{n127b}
\lA G(\eps\eta^\eps)\psi^\eps\rA_{H^{2}}\le C(\lA \eps\eta^\eps\rA_{H^{3}})\lA \nabla\psi^\eps\rA_{H^{2}}.
\ee
Then, by combining this estimate with the usual nonlinear estimates in Sobolev spaces one obtains that, if 
\be\label{n122b}
\lA (\eta^\eps(t),\nabla\psi^\eps(t))\rA_{H^{3}\times H^{2}}\le 1,
\ee
then $\lA B^\eps(t)\rA_{H^{2}}$ is bounded uniformly in $\eps\in (0,1]$. Then, using the triangle inequality and 
the product rule in Sobolev spaces, it is easily seen that
\be\label{n122c}
c_1\lA (\eta^\eps(t),\nabla\psi^\eps(t))\rA_{H^s\times H^{s-1}}\le \mathcal{E}^\eps(t)\le C_2
\lA (\eta^\eps(t),\nabla\psi^\eps(t))\rA_{H^{s+\mez}\times H^{s-\mez}},
\ee
for some absolute constants $c_1,C_2$. 

We are now in position to prove the wanted result. Fix $M= C_2/\delta_1$. 
Then
$$
\lA (\eta^\eps(0),\nabla\psi^\eps(0))\rA_{H^{s+\mez}\times H^{s-\mez}}\le 
\frac{\eps_0}{\eps}=\frac{1}{M}=\frac{\delta_1}{C_2}.
$$
We can always assume that $\delta_1/C_2\le 1$, so that the condition \e{n122b} is satisfied at time $t=0$. 
This allows us to use \e{n122c} which implies that
$$
\mathcal{E}^\eps(0)\le C_2
\lA (\eta^\eps(0),\nabla\psi^\eps(0))\rA_{H^{s+\mez}\times H^{s-\mez}}\le 
C_2\frac{\delta_1}{C_2}=\delta_1.
$$
As explained above, this implies that the 
solution exists on a time interval $[0,T^\eps]$ with $T^\eps\ge C_*/\eps=(C_*\delta_1/C_2)\eps_0$ 
and satisfies 
$\sup_{t\in [0,T^\eps]} \mathcal{E}^\eps(t)\le 1$. 
Now observe that, by definition of $\mathcal{E}^\eps(t)$, if 
$\mathcal{E}^\eps(t)\le 1$ then the condition \e{n122b} is satisfied. 
This allows us to use the bound \e{n122c} 
to deduce that
\be\label{n128b}
\begin{aligned}
\lA (\eta(t),\nabla\psi(t))\rA_{H^s\times H^{s-1}}
&=\eps\lA (\eta^\eps(t),\nabla\psi^\eps(t))\rA_{H^s\times H^{s-1}}\\
&\leÊ\frac{1}{c_1}\eps
=\frac{1}{c_1}M\eps_0=\frac{C_2}{c_1\delta_1}\eps_0.
\end{aligned}
\ee

This gives the desired result where the 
parameters $c_*,\delta_0,C_1$ in the statement of Theorem~\ref{Teps}Ê
are given by $c_*=C_*\delta_1/C_2$, $\delta_0=\delta_1/C_2$ and $C_1=C_2/(\delta_1 c_1)$.
\end{proof}

We are now in position to prove Lemma~\ref{L51}. 
For some reason that will be clear below, we then fix $s$ such that
$$
s\in \xN,\quad s>4,\quad s\ge \frac{\beta+1/2}{2\beta-1}.
$$
Once $s$ is so fixed, we 
consider $c_*,\delta_0,C_1$ as given by 
the statement of Theorem~\ref{Teps}. 
Recall that we consider 
initial data $(\eta_0,\psi_0)$ such that
\be\label{n305}
\begin{aligned}
\eta_0&\defn\chi(x)\sum_{|n|+|m|\le N} a_{nm}^1\cos\left(\pi \frac{n x_1}{L_1}\right)
\cos\left(\pi\frac{m x_2}{L_2}\right),\\
\psi_0&\defn\chi(x)\sum_{|n|+|m|\le N} a_{nm}^2\cos\left(\pi \frac{n x_1}{L_1}\right)
\cos\left(\pi\frac{m x_2}{L_2}\right).
\end{aligned}
\ee
Directly from the Leibniz' rule, we see that 
for any smooth functions $u_1$, $u_2$, there holds 
$\lA u_1u_2\rA_{H^s}\les \lA u_1\rA_{W^{s,\infty}}\lA u_2\rA_{H^s}$. 
Then, with
$$
H_0\defn \lA (\eta_0,\nabla\psi_0)\rA_{L^{2}(\xT^d)\times H^{-1}(\xT^d)},
$$
one has
$$
\lA (\eta_0,\nabla\psi_0)\rA_{H^{s+\mez}(\xT^d)\times H^{s-\mez}(\xT^d)}
\le C_{2} N^{s+\mez}H_0,
$$
for some constant $C_2$ depending only on $s$ and on 
$\lA \chi\rA_{W^{s,\infty}}$. 
Set
$$
\eps_0\defn C_{2}H_0 N^{s+\mez}.
$$
Let $a\in (0,1]$ be a small positive number to be determined and assume that
\be\label{n132a}
H_0\le \frac{a\delta_0}{C_2N^{s+\mez+\beta}}.
\ee
Then $\eps_0\le a\delta_0/N^\beta$. Consequently, $\eps_0\le \delta_0$ and one can apply Theorem~\ref{Teps} to infer that 
the solution $(\eta,\psi)$ of system \e{system} with initial data 
$(\eta_0,\psi_0)$ 
exists on a time interval $[0,T_*]$ with 
\be\label{n136}
T_*\ge  \frac{c_*}{\eps_0}\ge \frac{c_*}{a\delta_0}N^{\beta},
\ee
and
\be\label{n129b}
\sup_{t\in [0,T_*]} \lA (\eta,\nabla\psi)(t)\rA_{H^{s}(\xT^d)\times H^{s-1}(\xT^d)}
\le C_1\eps_0=C_1C_{2}H_0 N^{s+\mez}.
\ee

Now we use an interpolation inequality in Sobolev spaces: If $\sigma=\lambda \sigma_1+(1-\lambda)\sigma_2$ with 
$\lambda \in [0,1]$, then 
$$
\lA u\rA_{H^{\sigma}}\le C(\sigma_1,\sigma_2)\lA u\rA_{H^{\sigma_1}}^\lambda \lA u\rA_{H^{\sigma_2}}^{1-\lambda}.
$$
This yields
$$
\lA \nabla\psi(t)\rA_{L^2}\le C(-1/2,s-1) \lA\nabla\psi(t)\rA_{H^{-\mez}}^{\lambda(s)}\lA\nabla\psi(t)\rA_{H^{s-1}}^{1-\lambda(s)}\quad\text{with}
\quad \lambda(s)=\frac{s-1}{s-1/2}.
$$
Consequently, it follows from \e{n129b} that
\be\label{n130a}
\lA\nabla\psi(t)\rA_{L^2}\le C(s)N^{\beta(s)}H_0^{1-\lambda(s)}\lA\nabla\psi(t)\rA_{H^{-\mez}}^{\lambda(s)}
\quad\text{with}\quad \beta(s)=\mez \frac{s+1/2}{s-1/2},
\ee
and $C(s)=C(-1/2,s-1) (C_1C_2)^{1-\lambda(s)}$. 

To conclude the proof if remains to estimate $\lA\nabla\psi\rA_{H^{-\mez}}$ 
and $H_0$ in terms of the hamiltonian $\mathcal{H}\defn \mez \int \big[g\eta^2+\psi G(\eta)\psi\big]\, dx$.  
We claim that there exists an absolute constant $K$ such that
\be\label{n131a}
\lA\nabla\psi(t)\rA_{H^{-\mez}}\le K \sqrt{\mathcal{H}}, \quad H_0\le K \sqrt{\mathcal{H}}.
\ee
Let us assume this claim and conclude the proof. 
Set 
$$
K_0\defn KC(s)/\sqrt{2},\quad A\defn K_0N^\beta.
$$
It follows from \e{n130a} and \e{n131a} that
$$
\lA \nabla\psi(t)\rA_{L^2}\le K C(s) N^{\beta(s)}\sqrt{\mathcal{H}} \quad\text{with}\quad \beta(s)=\mez \frac{s+1/2}{s-1/2}.
$$
By definition of $s$, one has $\beta(s)\le\beta$. Therefore, by definition of $A$, 
$$
\lA \nabla\psi(t)\rA_{L^2}\le K C(s) N^{\beta}\sqrt{\mathcal{H}}\le A\sqrt{2\mathcal{H}}.
$$
Eventually, we chose $a$ so small that the lifespan $T_*$ (see \e{n136}) satisfies
$$
T_*\ge \frac{c_*}{a\delta_0}N^{\beta}\ge  
 4\left[1+\frac{(2d+3)\max\{L_1,L_2\}}{\sqrt{g}}A\right]=T(A).
$$
On the other hand, if the initial data are of the form \e{n305} with
 $\la a_{nm}^\ell\ra\le cN^{-\kappa}$, then 
$$
H_0\le c\sqrt{L_1L_2}(2N+1)N^{-\kappa},
$$
so that the condition \e{n132a} holds for $\kappa\ge s+\frac{3}{2}+\beta$ and $c$ small enough. This completes the proof of Lemma~\ref{L51} and hence the proof of Theorem~\ref{TI}.

Therefore, to complete the proof of Theorem~\ref{TI}, it remains only to prove the claim \e{n131a}. 
Notice that, by definition of $H_0$ and $\mathcal{H}$, one has 
$$
H_0\le \lA \eta_0\rA_{L^2}+\lA \nabla \psi_0\rA_{H^{-1/2}},\quad 
\lA \eta_0\rA_{L^2}\le 
\sqrt{\frac{2\mathcal{H}}{g}},
$$
so $H_0\le K \sqrt{\mathcal{H}}$ will be a straightforward consequence of the 
claim $\lA\nabla\psi\rA_{H^{-\mez}}\le K \sqrt{\mathcal{H}}$ at time $t=0$ (with a different constant $K$). 
It is thus sufficient to prove the first inequality in \e{n131a}. To do so, remember that (see Remark~\ref{Rhami}) 
$\sqrt{2\mathcal{H}}$ is larger than the $L^2(\Omega(t))$-norm of $\nabla_{x,y}\phi(t)$ (by definition~\e{n2}). 
Then, one infers a control of the $H^{-1/2}_x$-norm of the trace $\nabla\psi(t,\cdot)$ 
(see \cite[Prop. $3.12$]{LannesLivre}) where the implicit constant depends on $\lA \eta(t)\rA_{H^3}$. 
Now notice that one has a uniform control of $\lA \eta(t)\rA_{H^3}$, namely one has 
$\lA \eta(t)\rA_{H^{3}}\le 1$ as a consequence 
of \e{n129b}, the assumption $s\ge 4$ 
and the fact that one can assume without loss of generality that $C_1\eps_0\le 1$. 
Therefore the $\dot H^{1/2}_x$-norm of the trace $\psi$ is uniformly bounded by 
$K\sqrt{\mathcal{H}}$ for some absolute constant $K$, which implies the desired result 
$\lA\nabla\psi\rA_{H^{-\mez}}\le K \sqrt{\mathcal{H}}$. 

\begin{rema}\label{R:final}
Let $\beta>1/2$ and 
denote by $s$ the smallest integer satisfying
$$
s>4,\quad s\ge \frac{\beta+1/2}{2\beta-1}.
$$
Given $N\in\xN$ and $A,c>0$, we 
denote by $X_N(A,c)$ the set of functions 
$(\eta_0,\psi_0)\in H^{s+1}_e(\xT^d)\times H^{s+1}_e(\xT^d)$ such that 
$$
\eta_0\ge -\frac{h}{2},\quad \int_Q \eta_0 \, dx=0,
$$
and satisfying 
$$
\lA (\eta_0,\nabla\psi_0)\rA_{H^{s+\mez}\times H^{s-\mez}}
\le \min \Big\{A N^{s+\mez}\lA (\eta_0,\nabla\psi_0)\rA_{L^{2}\times H^{-1}}, cN^{-\beta}\Big\}.
$$
Then the previous proof shows that, for any $A>0$, there exists $c>0$ such that, 
for any $N\in \xN$, 
if $(\eta,\psi_0)$ belongs to $X_N(A,c)$ then the conclusion of Theorem~\ref{TI} holds.
\end{rema}

\appendix

\section{Hamiltonian proof}

The proof of Theorem~\ref{main}, which was given in the previous sections, relies on several cancellations. To understand 
these cancellations, we will give in this appendix a (formal) proof of \e{n10} which exploits directly 
the hamiltonian structure of the water-wave equations. 
\footnote{This proof could seem shorter than the one given in Sections~\ref{S:Pohozaev}--\ref{S:4} 
but this is not the case: indeed, it uses some computations 
done in Section~\ref{S:4}, it is not self-contained (we use a shape derivative formula due to Lannes) and also 
we do not try to justify rigorously the computations.} To simplify notations we consider the case $d=1$ and assume that $L_1=1$ (then $\xT=\xR/(2\xZ)$ and we denote by $H^s(\xT)$ the Sobolev space 
of $2$-periodic functions). 

Consider a solution 
$(\eta,\psi)\in C^{0}([0,T];H^{s}_{\pair}(\xT)\times H^{s}_{\pair}(\xT))$ 
of the Craig--Sulem--Zakharov system \e{system} for some $T>0$ and 
$s>5/2$ (remember that the subscript $e$ indicates that we consider even functions). Set
$$
A\defn \int_0^T\int_0^1 \big[(\partial_t\eta)x(\px\psi)-(\partial_t\psi)x(\px\eta)\big]\, dxdt.
$$
We are going to compute $A$ in two different ways, and the wanted identity \e{n10}Ê
will be deduced by comparing the two results. 


{\em First computation.} Here is the main new ingredient. In this step we compute $A$ using the 
Hamiltonian structure of the water waves equation. This computation relies on the works by 
Zakharov~\cite{Zakharov1968}, Craig--Sulem~\cite{CrSu} and Lannes~\cite{LannesJAMS}. 

Following Zakharov~\cite{Zakharov1968} and Craig--Sulem~\cite{CrSu}, 
we begin by using the fact that the water-wave system \e{system} can be written as
\be\label{n153}
\partial_t\eta=\frac{\delta \mathcal{H}}{\delta \psi},\quad \partial_t\psi=-\frac{\delta \mathcal{H}}{\delta \eta}
\ee
where\footnote{By assumption, $\eta$ and $\psi$ are $2$-periodic and even in $x$. This is why it is sufficient to consider integrals over $[0,1]$. In particular, 
the bilinear mapping $(u,v)\mapsto \int_0^1 u(x)v(x)\, dx$ is a scalar product on $L^2_e(\xR/(2\xZ))$. 
In \e{n153}, the derivatives of $\mathcal{H}$ 
are understood for this scalar product (instead of $\int_{-1}^1 u(x)v(x)\, dx$), and this is why \e{n153}Ê
holds 
even if $\mathcal{H}$ is only one half of the energy.}
$$
\mathcal{H}=\mez \int_0^1 \big[g\eta^2+\psi G(\eta)\psi\big]\, dx.
$$

The key point is the following: we have
$$
A=\int_0^T a(t)\, dt\quad \text{with}\quad a\defn \int_0^1 \left[x\psi_x\frac{\delta \mathcal{H}}{\delta \psi}
+x\eta_x\frac{\delta \mathcal{H}}{\delta \eta}\right]\, dx,
$$
which means that
$$
a=\lim_{\eps\rightarrow 0}\frac{1}{\eps}\left[ \mathcal{H}(\eta,\psi+\eps x\psi_x)-\mathcal{H}(\eta,\psi)\right]
+\lim_{\eps\rightarrow 0}\frac{1}{\eps}\left[ \mathcal{H}(\eta+\eps x\eta_x,\psi)-\mathcal{H}(\eta,\psi)\right].
$$
Since $G(\eta)$ is self-adjoint, one has immediately
$$
\lim_{\eps\rightarrow 0}\frac{1}{\eps}
\left[ \mathcal{H}(\eta,\psi+\eps x\psi_x)-\mathcal{H}(\eta,\psi)\right]=\int (G(\eta)\psi)(x\psi_x)\, dx.
$$
On the other hand
$$
\lim_{\eps\rightarrow 0}\frac{1}{2\eps}\int\left[ (\eta+\eps x\eta_x)^2-\eta^2\right]\, dx=\int \eta x\eta_x\, dx
$$
so the only difficulty is to compute
$$
\lim_{\eps\rightarrow 0}\frac{1}{2\eps}\int\left[\psi G(\eta+\eps x\eta_x)\psi-\psi G(\eta)\psi\right]\, dx.
$$
To do so, we use a formula due to Lannes (see \cite{LannesJAMS,LannesLivre}) which allows us 
to compute the derivative of $G(\eta)\psi$ with respect to $\eta$. This formula gives
$$
G(\eta+\eps x\eta_x)\psi=G(\eta)\psi-\eps G(\eta)\big(B x\eta_x\big)-\eps \px (Vx\eta_x)+O(\eps^2),
$$
where, as above, $B=\py\phi\arrowvert_{y=\eta}$ and $V=\px\phi\arrowvert_{y=\eta}$. 
Using again the fact that $G(\eta)$ is self-adjoint, we obtain
\begin{multline*}
\lim_{\eps\rightarrow 0}\frac{1}{2\eps}\int \left[\psi G(\eta+\eps x\eta_x)\psi-\psi G(\eta)\psi\right]\, dx\\
=-\mez\int \left[ (G(\eta)\psi)(Bx\eta_x)+ \psi\px (Vx\eta_x)\right]\, dx.
\end{multline*}
By combining the previous results, we get
$$
a=g\int \eta x\eta_x\, dx+\int (G(\eta)\psi)(x\psi_x)\, dx-\mez\int \left[(G(\eta)\psi)(Bx\eta_x)+ \psi\px (Vx\eta_x)\right]\, dx.
$$
Since $V(t,x)=0$ for $x=1$ (by assumption) one has
$$
-\mez \int \psi\px (Vx\eta_x)\, dx=\mez \int (x\psi_x)V\eta_x\, dx.
$$
Then, gathering the terms in a different way, we find that
$$
a=g\int \eta x\eta_x\, dx+\mez \int (G(\eta)\psi)x(\psi_x-B\eta_x)\, dx
+\mez\int \left[(G(\eta)\psi+V\eta_x)(x\psi_x)\right]\, dx.
$$
Remembering that
$$
\psi_x-B\eta_x=V,\quad G(\eta)\psi+V\eta_x=B,
$$
we get
$$
a=g\int \eta x\eta_x\, dx+\mez \int (G(\eta)\psi)xV\, dx
+\mez\int B x\psi_x\, dx.
$$
Recalling that $m(t)=\eta(t,1)$ and computing the first term, we conclude that
\be\label{n87}
A=\frac{g}{2} \int m(t)^2\, dt-\frac{g}{2} \iint \eta^2\, dx dt +\mez \iint \Big[(G(\eta)\psi)xV + B x\psi_x\Big]\, dxdt.
\ee

{\em Second computation.} We now compute $A$ using integration by parts and the equations for $\eta$ and $\psi$. 
This second computation is not new. However, since 
it does not appear explicitly in the previous section, for the sake of readability, we redo the analysis. 

Directly from the definition of $A$, by integration by parts, one has
$$
A=-\int \eta(t,1)\partial_t\psi(t,1)\, dt+\int \eta x\psi_x\, dx\Big\arrowvert_{t=0}^{t=T}+\iint \eta\partial_t\psi \, dxdt.
$$
It is convenient to split the last term as
$$
\iint \eta\partial_t\psi \, dxdt=\uq \iint \eta\partial_t\psi \, dxdt+\frac{3}{4}\iint \eta\partial_t\psi \, dxdt.
$$
To handle the first term in the right-hand side, we integrate by parts in time 
and replace $\partial_t\eta$ by $G(\eta)\psi$. For the second term, we 
use the equation for $\psi$ written under the form (see~\e{n25})
$$
\partial_t\psi=-g\eta-\mez V^2 -B V\partial_x \eta+ \mez B^2.
$$
Then one gets
\begin{align*}
\iint \eta\partial_t\psi \, dxdt&=-\uq \iint \psi G(\eta)\psi\, dxdt+\uq\int \eta\psi\, dx\Big\arrowvert_{t=0}^{t=T}\\
&\quad -\frac{3g}{4}\iint \eta^2\, dxdt-\frac{3}{8}\iint \eta\left(V^2 +2B V\partial_x \eta-  B^2\right)\,dxdt.
\end{align*}
Recalling that
$\partial_t\psi(t,1)=-gm(t)+\mez m'(t)^2$ (see \e{n94}), we end up with
\be\label{n89}
\begin{aligned}
A&=\int\big[gm(t)^2-\mez m(t)m'(t)^2\big]\,dt\\
&\quad -\uq \iint \psi G(\eta)\psi\, dxdt-\frac{3g}{4}\iint \eta^2\, dxdt\\
&\quad+\uq\int \eta\psi\, dx\Big\arrowvert_{t=0}^{t=T}+\int x\eta\px\psi\, dx\Big\arrowvert_{t=0}^{t=T}\\
&\quad-\frac{3}{8} \iint \eta\left(V^2 +2B V\partial_x \eta-  B^2\right)\,dxdt.
\end{aligned}
\ee

{\em Conclusion.}Ê
By combining \e{n87} and \e{n89} we obtain that
\be\label{n90}
\begin{aligned}
\mez\int\big[gm(t)^2-m(t)m'(t)^2\big]\,dt&=\uq \iint \psi G(\eta)\psi\, dx dt +\frac{g}{4}\iint \eta^2\, dxdt\\
&\quad-\uq\int \eta\psi\, dx\Big\arrowvert_{t=0}^{t=T}-\int x\eta\px\psi\, dx\Big\arrowvert_{t=0}^{t=T}\\
&\quad+R
\end{aligned}
\ee
with
$$
R\defn \iint \left[ \frac{3}{8}\eta\left( V^2 +2B V\partial_x \eta-  B^2\right)+\mez  (G(\eta)\psi)xV 
+\mez B (x\psi_x)\right]\, dxdt.
$$
Since 
$$
B=\py\phi \arrowvert_{y=\eta},\quad 
V=\px\phi \arrowvert_{y=\eta},\quad 
G(\eta)\psi=(\py\phi -\eta_x\px\phi) \arrowvert_{y=\eta},
$$
and since $\psi_x=(\px\phi +\eta_x\py\phi) \arrowvert_{y=\eta}$, 
one can write $R$ as
$$
R=\iint u(t,x,\eta(t,x))\, dx dt+\iint f(t,x,\eta(t,x))\eta_x(t,x)\, dx dt
$$
where
\begin{align*}
u(x,y)&=\frac{3}{8} y (\px\phi)^2-\frac{3}{8}y(\py\phi)^2+ x(\px\phi)(\py\phi),\\
f(x,y)&=\frac{3}{4}y(\px\phi)(\py\phi)-\mez x(\px\phi)^2+\mez x(\py\phi)^2.
\end{align*}
Then one has 
$$
\py u-\px f=\frac{7}{8} \big((\px\phi)^2-(\py\phi)^2\big).
$$
Consequently, it follows from 
the identities \e{n41ab} and \e{n100} that
$$
R=\iint \left(\frac{h}{2}+\frac{7}{8}\eta\right)(\px\phi)(t,x,-h)^2\, dxdt-\frac{7}{4}\iiint (\px\eta)(\px\phi)(\py\phi)\,dydxdt.
$$
The wanted identity \e{n10} thus follows from \e{n90}.

\bibliographystyle{plain}
\bibliography{bib_Boundary}



Thomas Alazard\\[1ex]
CNRS et D\'epartement de Math\'ematiques et Applications UMR 8553\\
\'Ecole normale sup\'erieure \\
45 rue d'Ulm\\
Paris F-75005, France

\end{document}